\newcommand{\C}{{\mathbb C}}       
\newcommand{\Aut}{\mbox{\rm Aut}}
\newcommand{\Inn}{\mbox{\rm Inn}}
\newcommand{\Out}{\mbox{\rm Out}}
\newcommand{\Hom}{\mbox{\rm Hom}}
\newcommand{\Ker}{\mbox{\rm Ker}}
\newcommand{\Ind}{\mbox{\rm Ind}}
\newcommand{\Res}{\mbox{\rm Res}}
\newcommand{\Irr}{\text{\rm Irr}}
\newcommand{\diag}{\text{\rm diag}}
\newcommand{\Inndiag}{\text{\rm Inndiag}}
\newcommand{\GL}{\text{\rm GL}}
\newcommand{\GU}{\text{\rm GU}}
\newcommand{\SL}{\text{\rm SL}}
\newcommand{\PGL}{\text{\rm PGL}}
\newcommand{\PGU}{\text{\rm PGU}}
\newcommand{\PSL}{\text{\rm PSL}}
\newcommand{\SU}{\text{\rm SU}}
\newcommand{\PSU}{\text{\rm PSU}}
\newcommand{\SO}{\text{\rm SO}}
\newcommand{\PSp}{\text{\rm PSp}}
\newcommand{\Aff}{\text{\rm Aff}}
\newcommand{\St}{\text{\rm St}}
\renewcommand{\P}{\text{\rm P}}
\newcommand{\id}{\mbox{\rm id}}
\newcommand{\ad}{\mbox{\rm ad}}
\newtheorem{num}{Notation}[subsection]
\newtheorem{defn}[num]{Definition}
\newtheorem{thm}[num]{Theorem}
\newtheorem{lem}[num]{Lemma}
\newtheorem{prp}[num]{Proposition}
\newtheorem{cor}[num]{Corollary}
\newtheorem{conj}[num]{Conjecture}
\newtheorem{xmpls}[num]{Examples}
\newtheorem{rem}[num]{Remark}
\newtheorem{notn}[num]{Notation}
\numberwithin{equation}{section}
\numberwithin{table}{section}
\numberwithin{figure}{section}
\begin{document}

\title{The eigenvalue one property of finite groups, I}
\author{Gerhard Hiss}
\author{Rafa{\l} Lutowski}

\address{Gerhard Hiss:
Lehrstuhl f{\"u}r Algebra und Zahlentheorie,
RWTH Aachen University,
52056 Aachen, Germany}
\email{gerhard.hiss@math.rwth-aachen.de}

\address{Rafa{\l} Lutowski:
Institute of Mathematics, Physics and Informatics,
University of Gda\'nsk,
ul. Wita Stwosza 57,
80-308 Gda\'nsk, Poland}
\email{rafal.lutowski@ug.edu.pl}

\subjclass[2000]{Primary: 20C15, 20C33; Secondary: 55M20, 20F34}

\keywords{Flat manifolds, Reidemeister numbers, irreducible 
representations of odd degrees of finite groups, finite simple groups,
automorphism groups}

\begin{abstract}
We prove a conjecture of Dekimpe, De Rock and Penninckx concerning the 
existence of eigenvalues one in certain elements of finite groups acting 
irreducibly on a real vector space of odd dimension. This yields a sufficient 
condition for a closed flat manifold to be an $R_{\infty}$-manifold.
\end{abstract}

\date{\today}

\maketitle

\section{Introduction}

The purpose of this series of two articles is to prove a conjecture of Dekimpe, 
De Rock and Penninckx \cite[Conjecture~$4.7$]{DDRP}. Our results have been 
announced in~\cite{HLIschia}.

\subsection{Motivation, the conjecture, and the main result}

Let~$M$ be a real closed manifold with fundamental group~$\pi_1(M)$ and let
$f\colon M \rightarrow M$ be a homeomorphism of~$M$. The Reidemeister number~$R(f)$
of~$f$ is the number of $f_\#$-conjugacy classes on $\pi_1(M)$, where~$f_\#$ is
the automorphism of $\pi_1(M)$ induced by functoriality. A priori,~$R(f)$ is a 
positive integer or~$\infty$. If~$M$ is a nil-manifold and $R(f) = \infty$, then 
$L(f) = N(f) = 0$, where~$L(f)$ and $N(f)$ denote the Lefshetz number and the 
Nielsen number of~$f$, respectively; see the introduction of~\cite{DDRP}. If 
$R(f) = \infty$ for every homeomorphism~$f$ of~$M$, then~$M$ is called an 
$R_\infty$-manifold.

A closed flat manifold~$M$ is a space of the form 
$M = \Gamma \backslash \mathbb{R}^n$, where~$\Gamma$ is a discrete, torsion 
free, cocompact subgroup of $\Aff( \mathbb{R}^n ) =
\mathbb{R}^n \rtimes \GL_n( \mathbb{R} )$. Then~$\Gamma$ is the fundamental
group of~$M$ and there is a short exact sequence
\begin{equation}
\label{GammaSequence}
0 \longrightarrow \mathbb{Z}^n \longrightarrow \Gamma \longrightarrow G \longrightarrow 1,
\end{equation}
where $\mathbb{Z}^n = \Gamma \cap \mathbb{R}^n$ and~$G$ is a finite group, the
holonomy group of~$M$. The sequence~(\ref{GammaSequence}) gives rise to a 
representation
\begin{equation}
\label{Rho}
\gamma \colon G \rightarrow \GL_n( \mathbb{Z} ),
\end{equation}
the holonomy representation of~$M$.

In \cite[Theorem~$4.7$]{DDRP}, the authors gave a necessary condition for a
closed flat manifold~$M$ to be an $R_{\infty}$-manifold in terms of its
holonomy representation. In order to rephrase this condition, we need to 
introduce further terminology. A $\mathbb{Z}$-subresentation of~$\gamma$ is a 
representation $\rho\colon G \rightarrow \GL_d( \mathbb{Z} )$ arising from a 
$\gamma(G)$-invariant, pure sublattice~$Y$ of $\mathbb{Z}^n$ of rank~$d$; 
here,~$Y$ is called pure, if some $\mathbb{Z}$-basis of~$Y$ extends to a 
$\mathbb{Z}$-basis of~$\mathbb{Z}^n$. By concatenating~$\gamma$ with the natural
embeddings $\GL_d( \mathbb{Z} ) \rightarrow \GL_d( \mathbb{Q} )
\rightarrow \GL_d( \mathbb{R} )$, we may optionally view~$\gamma$ as a 
$\mathbb{Q}$-representation or an $\mathbb{R}$-representation of~$G$, for which
we write~$\gamma^{\mathbb{Q}}$ or~$\gamma^{\mathbb{R}}$, respectively. An 
analogous convention is used for subrepresentations of~$\gamma$. We can now 
state the criterion of Dekimpe, De Rock and Penninckx, adopting the notation 
used later on in this article.

\begin{thm}{\cite[Theorem~$4.7$]{DDRP}}
\label{DDRPTheorem}
Let~$M$ be a closed flat manifold with holonomy 
representation~{\rm (\ref{Rho})}. Suppose there is a
$\mathbb{Z}$-sub\-re\-pre\-sent\-ati\-on 
$\rho \colon G \rightarrow \GL_d( \mathbb{Z} )$, such that~$\rho^{\mathbb{Q}}$ is 
irreducible and of multiplicity one as a composition factor 
of~$\gamma^{\mathbb{Q}}$, and such that the following two conditions are 
satisfied:
%
%

{\rm (a)} If~$\tilde{\rho}$ is a $\mathbb{Q}$-subrepresentation of~$\gamma$ of 
degree~$d$ such that $\rho(G)$ and $\tilde{\rho}(G)$ are conjugate in 
$\GL_d( \mathbb{Q} )$, then~$\rho^{\mathbb{Q}}$ and~$\tilde{\rho}^{\mathbb{Q}}$ 
are equivalent. 

{\rm (b)} For every $n \in N_{\GL_d( \mathbb{Z} )}(\rho(G) )$, there is 
$g \in G$ such that $\rho(g)n$ has eigenvalue~$1$.

Then $M$ is an $R_\infty$-manifold. \hfill{\textsquare} 
\end{thm}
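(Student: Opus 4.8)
The plan is to reduce the global statement about all homeomorphisms to a purely representation-theoretic condition on $\Aut(\Gamma)$, and then to feed the hypotheses (a) and (b) into that condition. First I would use that a closed flat manifold is aspherical, so that $M$ is a $K(\Gamma,1)$; combined with Bieberbach's second theorem, every automorphism of $\Gamma$ is induced by an (affine) homeomorphism of $M$, and $R(f)$ depends only on the induced outer automorphism. Hence $M$ is an $R_\infty$-manifold if and only if $R(\phi)=\infty$ for every $\phi\in\Aut(\Gamma)$. Next I would invoke the averaging/Lefschetz description of Reidemeister numbers on flat manifolds: writing $D_\phi:=\phi|_{\Z^n}\in\GL_n(\Z)$, which is legitimate because $\Z^n$ is characteristic in $\Gamma$, one has $R(\phi)=\infty$ as soon as $\gamma(g)\,D_\phi$ has eigenvalue~$1$ for some $g\in G$. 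Thus it suffices to produce, for each $\phi$, an element $g\in G$ with this eigenvalue property.

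The heart of the argument is to transport this to the subrepresentation $\rho$. The automorphism $\phi$ induces $\bar\phi\in\Aut(G)$ with $D_\phi\,\gamma(g)\,D_\phi^{-1}=\gamma(\bar\phi(g))$ for all $g$, so $D_\phi$ intertwines $\gamma^{\Q}$ with $\gamma^{\Q}\circ\bar\phi$. Let $V$ be the $\rho^{\Q}$-isotypic component of $\Q^n$; by the multiplicity-one hypothesis $V$ is the unique $d$-dimensional submodule affording $\rho^{\Q}$, and it equals $Y\otimes\Q$ for the pure sublattice $Y$ defining $\rho$. A direct computation shows that the $\Q$-subrepresentation $\tilde\rho$ afforded by $D_\phi(Y)$ is $\rho\circ\bar\phi^{-1}$, whose matrix image coincides with $\rho(G)$; hence condition (a) applies and yields $\rho^{\Q}\cong\rho^{\Q}\circ\bar\phi^{-1}$. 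Because $D_\phi$ carries $V$ onto the $(\rho^{\Q}\circ\bar\phi^{-1})$-isotypic component of $\Q^n$, this equivalence forces $D_\phi(V)=V$.

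It remains to make the restriction integral and apply (b). Here I would exploit purity: since $Y$ is pure we have $Y=\Z^n\cap V$, and as $D_\phi$ stabilises both $\Z^n$ and $V$ it follows that $D_\phi(Y)=Y$. Consequently $N:=D_\phi|_V$ lies in $\GL_d(\Z)$ with respect to a $\Z$-basis of $Y$, and the intertwining relation restricts to $N\rho(g)N^{-1}=\rho(\bar\phi(g))$, so that $N\in N_{\GL_d(\Z)}(\rho(G))$. Applying hypothesis (b) to $N$ gives $g\in G$ for which $\rho(g)N$ has eigenvalue~$1$; since $V$ is invariant under both $\gamma(g)$ and $D_\phi$ with $(\gamma(g)D_\phi)|_V=\rho(g)N$, the matrix $\gamma(g)D_\phi$ has eigenvalue~$1$ on $\Q^n$. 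By the criterion above, $R(\phi)=\infty$, and as $\phi$ was arbitrary, $M$ is an $R_\infty$-manifold.

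I expect the main obstacle to be the invariance statement $D_\phi(V)=V$: one must argue that $\bar\phi$ cannot move the constituent $\rho^{\Q}$ to a \emph{different} constituent of $\gamma^{\Q}$ that merely has a conjugate matrix image, and this is exactly what condition (a) rules out, while multiplicity one is what makes $V$ canonical and identifies it with $Y\otimes\Q$. The second delicate point is the integrality of $N$, which hinges on the purity of $Y$; without it one would only obtain $N\in N_{\GL_d(\Q)}(\rho(G))$ and could not invoke (b). Establishing the eigenvalue reformulation of $R(\phi)=\infty$ precisely is the remaining technical input.
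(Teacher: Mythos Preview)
This theorem is not proved in the present paper. It is quoted from \cite[Theorem~$4.7$]{DDRP} and stated without proof (note the terminal \textsquare\ with no intervening argument). Consequently there is no ``paper's own proof'' to compare your proposal against; the authors use this result as a black box to motivate Conjecture~\ref{DDRPConjecture} and to derive Corollary~\ref{cor:1}.

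Your sketch is a reasonable outline of how one would expect the original argument in \cite{DDRP} to go: reduce to automorphisms of~$\Gamma$, translate the $R_\infty$ condition into an eigenvalue-$1$ criterion for $\gamma(g)D_\phi$, use multiplicity one together with condition~(a) to force $D_\phi$ to preserve the $\rho^{\mathbb{Q}}$-isotypic component, use purity to descend to~$\mathbb{Z}$, and then apply~(b). Whether every step matches the proof in \cite{DDRP} (in particular the precise form of the Reidemeister-number criterion and the exact role of~(a)) is something you would have to check against that reference, not against the present paper.
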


Subsequent to this theorem, the authors formulate the following conjecture,
which we cite, up to the notation, literally.

\begin{conj}{\cite[Conjecture~$4.8$]{DDRP}}
\label{DDRPConjecture}
Let $\rho\colon G \rightarrow \GL_d( \mathbb{Z} )$ be a representation of a 
non-trivial finite group~$G$ such that $\rho^{\mathbb{R}}$ is faithful 
and irreducible. Suppose that~$d$ is odd. Then for every 
$n \in N_{\GL_d( \mathbb{Z} )}(\rho(G) )$, there is $g \in G$ such that 
$\rho(g)n$ has eigenvalue~$1$. \hfill{\textsquare} 
\end{conj}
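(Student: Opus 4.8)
The plan is to first show that the hypotheses force the ambient normalizer to be \emph{finite}, thereby turning the problem into one about a single finite real matrix group, and then to isolate the one genuinely hard case by a determinant argument. Write $H=\rho(G)\le\GL_d(\Z)$ and $N=N_{\GL_d(\Z)}(H)$; since $\rho^{\R}$ is faithful, $H\cong G$. First I would exploit that $d$ is odd to pin down the type of $V:=\R^d$. As $V$ is an irreducible real $G$-module, $\End_{\R G}(V)$ is $\R$, $\C$, or $\mathbb{H}$; the latter two force $\dim_\R V$ to be even, respectively divisible by $4$, so oddness of $d$ leaves only $\End_{\R G}(V)=\R$. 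Thus $V$ is absolutely irreducible of real (Frobenius--Schur $+1$) type and $C_{\GL_d(\R)}(H)=\R^\times I$, whence $C_{\GL_d(\Z)}(H)=\{\pm I\}$. Conjugation gives a homomorphism $N\to\Aut(H)\cong\Aut(G)$ with kernel this centralizer; as $\Aut(G)$ is finite, $N$ is finite. In particular every $n\in N$ has finite order, $\Gamma:=\langle H,n\rangle\le N$ is a finite group, and (averaging an inner product) we may assume $\Gamma\le\mathrm{O}(d)$, so each $\rho(g)n$ is a real orthogonal matrix of finite order whose eigenvalues are roots of unity.

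Next I would record the elementary lemma that does most of the work: a finite-order matrix $x\in\mathrm{O}(d)$ with $d$ odd and $\det x=1$ has eigenvalue $1$. Indeed, its non-real eigenvalues occur in conjugate pairs contributing $+1$ to the determinant, the real eigenvalues are $\pm1$, and $\det x=1$ forces an even number of $(-1)$'s; since the number of real eigenvalues is $d-2(\#\text{pairs})$, which is odd, the number of $(+1)$-eigenvalues is odd, hence positive. (Equivalently, $\det x=1$ makes the Lefschetz number of $x$ on $S^{d-1}$ equal to $2$.) Applying this to $x=\rho(g)n$, the theorem is immediate \emph{unless} $\det(\rho(g)n)=-1$ for every $g\in G$, i.e. unless $H\le\SL_d(\Z)$ and $\det n=-1$. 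This is the only remaining case.

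The hard case asks, with $\Gamma\le\mathrm{O}(d)$, $H\le\Gamma\cap\SO(d)$ and $\det n=-1$, for an element of the orientation-reversing coset $Hn$ possessing a $(+1)$-eigenvector. Here the Lefschetz number of every coset element is $0$, so neither a single-element nor a naive averaging argument can succeed: for instance $x=-I$ is fixed-point-free, and $\tfrac1{|H|}\sum_{x\in Hn}\det(I-x)$ can be strictly positive even when a vanishing term exists. My plan is therefore to extend $V$ to a $\Gamma$-module and run an induction on $|G|$ via Clifford theory with respect to a minimal normal subgroup of $G$. Oddness of $d$ is extremely restrictive: all relevant multiplicities and degrees are odd, which severely limits both the possible minimal normal subgroups and the way $\rho$ decomposes on them, and steers the analysis toward the case in which $G$ is close to quasi-simple (or almost simple) and $\rho$ is a faithful irreducible representation of odd degree and real type lying in $\SL_d$.

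The main obstacle is precisely this almost-simple core. One must show, for each finite (quasi-)simple group $S$ admitting a faithful irreducible real representation of odd degree contained in $\SL$, and for each coset $Hn$ with $\det n=-1$ arising from an automorphism of $S$, that some $\rho(g)n$ fixes a nonzero vector. This is where the classification of finite simple groups enters: one needs the odd-degree irreducible characters of $S$ (constrained by It\^o--Michler and McKay-type results), their fields of values and Frobenius--Schur indicators, and the explicit action of the outer automorphism induced by $n$, in order to exhibit a coset element with eigenvalue $1$. I expect this case-by-case verification, rather than the reductions above, to be the real content of the argument, and it is plausibly the part deferred to the sequel announced in the title.
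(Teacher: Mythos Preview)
Your outline is correct and matches the paper's strategy closely: both arguments first observe that $N_{\GL_d(\Z)}(\rho(G))$ is finite (the paper phrases this as passing to the stronger Theorem~\ref{RLTheorem}, where~$n$ is assumed of finite order), then reduce via characteristic subgroups and Clifford theory to the case where~$G$ is a non-abelian finite simple group (your minimal-normal-subgroup induction is the content of Section~\ref{EigenvalueOne}), and finally settle the simple groups case by case using the classification (Sections~\ref{SporadicAndAlternating}--\ref{OddCharacteristic} here, with even characteristic deferred to Part~II).

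One presentational difference is worth noting. Your determinant lemma---that a finite-order $x \in \SL_d(\R)$ with~$d$ odd has eigenvalue~$1$---disposes of the case $\det n = 1$ in one stroke; the paper records only the weaker special case ``$|n|$ odd'' in Subsection~\ref{TwoMethods} and instead organizes the simple-group analysis around a Case~1/Case~2 dichotomy (whether $-\id_V \in \langle \rho(G), n \rangle$; see Subsection~\ref{PreliminaryConsiderations}) together with two systematic tools you do not name: the \emph{restriction method} (Lemmas~\ref{InvariantSubmodule}--\ref{InvariantSubgroup}, Corollary~\ref{NormalSubgroups}) and the \emph{large degree method} (Lemmas~\ref{MainCriterion}, \ref{LargeDegrees}). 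Your framing is slightly cleaner for the trivial half, but since for perfect~$G$ one always has $\rho(G) \le \SL_d$, the genuinely hard coset $\det n = -1$ remains, and there the paper's two methods carry the actual weight of the simple-group verification that you correctly anticipate.
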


The authors give an example which shows that the condition on~$d$ to be odd is 
necessary. 

Condition~(b) in Theorem~\ref{DDRPTheorem} implies that~$\rho^{\mathbb{R}}$ is 
irreducible and that $N_{\GL_d( \mathbb{Z} )}(\rho(G) )$ has finite order; see 
the proof of~\cite[Theorem~A]{SzczOut}. 
In this paper we prove a slightly stronger version of 
Conjecture~\ref{DDRPConjecture}. Namely, we start with an irreducible 
$\mathbb{R}$-representation~$\rho$ of a finite group~$G$ of odd degree, but not
necessarily realizable over the integers. Then an element in 
$N_{\GL_d( \mathbb{R} )}(\rho(G) )$ need not be of finite order. We also 
do not insist on~$\rho$ being faithful. This relaxation is useful for inductive
purposes, but does not provide a true generalization, as the eigenvalue 
condition only concerns the image~$\rho(G)$. 

\begin{defn}
\label{RLDefinition}
{\rm
Let~$G$ be a finite group and let~$V$ be an $\mathbb{R}G$-module
affording the representation $\rho\colon G \rightarrow \GL(V)$. Let
$n \in N_{\GL(V)}( \rho(G) )$ be of finite order. 

We say that $(G,V,n)$ has the \emph{eigenvalue one property}, if there is
$g \in G$ such that $\rho(g)n$ has eigenvalue~$1$.

We say that~$(G,V)$ has the \emph{eigenvalue one property} if $(G,V,n')$ has the
eigenvalue one property for all $n' \in N_{\GL(V)}( \rho(G) )$ of finite order.

We say that~$G$ has the \emph{eigenvalue one property} if $(G,V')$ has the 
eigenvalue one property for every irreducible, non-trivial 
$\mathbb{R}G$-module~$V'$ of odd dimension.
} \hfill{\textsquare}
\end{defn}

\begin{xmpls}
\label{TwoExamples}
{\rm
(a) If~$V$ is the trivial $\mathbb{R}G$-module, then $(G,V)$ does not have the
eigenvalue one property.

(b) An elementary abelian $p$-group has the eigenvalue one property.
}\hfill{\textsquare}
\end{xmpls}

We can now present the main result of our series of two papers. 

\begin{thm}
\label{RLTheorem}
Every finite group has the eigenvalue one property. \hfill{\textsquare}
\end{thm}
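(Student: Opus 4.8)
The plan is to combine an elementary parity argument, which disposes of almost everything, with a classification-based analysis of the residual ``hard'' case.

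First I would record the only structural fact about odd-dimensional real irreducibles that is needed. If~$V$ is an irreducible $\R G$-module with $\dim_{\R} V$ odd, then~$V$ must be of real type: if its endomorphism algebra were~$\C$ or~$\mathbb{H}$, then $\dim_{\R} V$ would be divisible by~$2$ or~$4$. Hence~$V$ is the realification of an odd-degree complex irreducible character~$\chi$ of~$G$ with Frobenius--Schur indicator~$+1$, and I would phrase everything in terms of such~$\chi$. Next comes the reduction I can carry out in full, by a determinant/parity lemma: if $h \in \GL(V)$ has finite order and $\det h = 1$, then $h$ has eigenvalue~$1$. Indeed, after averaging to an $h$-invariant inner product, the eigenvalues of~$h$ are roots of unity; the genuinely complex ones occur in conjugate pairs, each contributing~$1$ to the determinant, so $\det h = (-1)^{b}$, where~$b$ is the multiplicity of the eigenvalue~$-1$. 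Since $\dim_{\R} V$ is odd, the multiplicities~$a$ of~$+1$ and~$b$ of~$-1$ satisfy $a+b$ odd; if $\det h = 1$ then~$b$ is even, forcing $a \geq 1$. Applying this to $h = \rho(g)n$, and using $\det(\rho(g)n) = \det\rho(g)\cdot\det n$ with $\det\rho(g), \det n \in \{\pm 1\}$, I win immediately with $g$ the identity when $\det n = 1$, and with any~$g$ satisfying $\det\rho(g) = -1$ when $\det n = -1$ and the homomorphism $\det\circ\rho\colon G \to \{\pm 1\}$ is nontrivial.

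It therefore suffices to treat the hard case $\rho(G) \subseteq \SL(V)$ and $\det n = -1$. Here I would form the finite group $H = \langle \rho(G), n\rangle$, in which $\rho(G)$ is normal,~$V$ remains irreducible, and $\det\colon H \to \{\pm 1\}$ has kernel containing~$\rho(G)$, so that every element of the coset $\rho(G)n$ has determinant~$-1$. An eigenvalue-one element~$h$ of this coset then has even $+1$-multiplicity, so the goal sharpens to producing~$h \in \rho(G)n$ with $\dim_{\R}\mathrm{Fix}(h) \geq 2$. For this I would induct on~$|G|$ using Clifford theory: restricting~$\chi$ to a minimal normal subgroup~$N$ and analysing the induced structure of the normalizer $N_{\GL(V)}(\rho(G))$ should reduce matters to the case where~$G$ is almost simple (or quasi-simple), with the bookkeeping arranged so that the normalizing element and the coset $\rho(G)n$ persist through each reduction step.

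The main obstacle, and presumably the bulk of the work deferred to the sequel, is the almost simple base case. There one must invoke the classification of finite simple groups to understand the odd-degree, real-valued irreducible characters of each simple group~$S$ together with the action of $\Aut(S)$ --- diagonal, field, and graph automorphisms --- on them, and then exhibit, inside the relevant $\det = -1$ coset of $\langle \rho(G), n\rangle$, an element with a fixed vector. Isolating the problem to almost simple groups is comparatively formal; the genuine difficulty is the case-by-case verification across the series of Lie type and the sporadic groups, and that is where I expect the real effort to lie.
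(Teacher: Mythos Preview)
Your overall architecture is the same as the paper's: reduce by Clifford-type arguments to the case of a non-abelian simple group~$G$, and then run through the classification. The paper carries out exactly this program: Section~3 shows that a minimal counterexample is non-abelian simple (via a careful treatment of characteristically simple groups, i.e.\ direct powers of a fixed simple group, in Proposition~\ref{GroupDirectProduct}), Sections~4--6 dispose of the sporadic, alternating, and odd-characteristic Lie-type groups, and Part~II is reserved for even characteristic. Your sketch ``Clifford theory on a minimal normal subgroup should reduce to almost simple'' points in the right direction but skips the one genuinely delicate step in the reduction, namely the passage through a direct product $H \times \cdots \times H$ where the normalizing element~$n$ may permute the factors; the paper handles this explicitly.

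Your determinant/parity lemma is correct and is a clean observation the paper does not isolate in that form (it only uses the special case ``$|n|$ odd $\Rightarrow$ eigenvalue~$1$'' in passing). It does kill the case $\det n = 1$ for free. But note that it buys essentially nothing once you are at the simple base case: there~$G$ is perfect, so $\rho(G) \leq \SL(V)$ automatically, and the entire difficulty already sits in the coset with $\det = -1$. Your reformulation ``find $h \in \rho(G)n$ with $\dim\mathrm{Fix}(h) \geq 2$'' is a true consequence of the parity, but it is not an easier target; the paper's methods (the restriction method of Lemmas~\ref{InvariantSubmodule}--\ref{InvariantSubgroup} and the large-degree method of Lemma~\ref{LargeDegrees}) attack the original formulation directly and are what actually does the work for each simple family. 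So your proposal is a sound plan whose non-trivial content coincides with the paper's, plus a pleasant but ultimately cosmetic shortcut on the easy half.
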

Clearly, Theorem~\ref{RLTheorem} implies Conjecture~\ref{DDRPConjecture}.
The following is a consequence of this and Theorem~\ref{DDRPTheorem} .

\begin{cor}
\label{cor:1}
Let~$M$ be a closed flat manifold with holonomy group $G$ and holonomy
representation $\gamma\colon G \rightarrow \GL_n( \mathbb{Z} )$. 

Suppose there is a $\mathbb{Z}$-subrepresentation 
$\rho\colon G \rightarrow \GL_d( \mathbb{Z} )$ of~$\gamma$ such that 
$\rho^{\mathbb{R}}$ is 
irreducible, non-trivial, of odd degree and of multiplicity one as a
composition factor of $\gamma^{\mathbb{R}}$, and such that $\rho^{\mathbb{Q}}$
satisfies condition~{\rm (a)} of {\rm Theorem~\ref{DDRPTheorem}}.

Then~$M$ is an $R_{\infty}$-manifold. \hfill{\textsquare}
\end{cor}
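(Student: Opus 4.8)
The plan is to deduce the corollary from Theorem~\ref{DDRPTheorem} by verifying its hypotheses with the help of Theorem~\ref{RLTheorem}. That theorem demands a $\mathbb{Z}$-subrepresentation $\rho$ whose rationalization $\rho^{\mathbb{Q}}$ is irreducible and occurs with multiplicity one as a composition factor of $\gamma^{\mathbb{Q}}$, together with conditions~(a) and~(b). Condition~(a) is assumed outright, so the work splits into three parts: transporting irreducibility and multiplicity-one down from $\mathbb{R}$ to $\mathbb{Q}$, and establishing condition~(b).

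For the passage from $\mathbb{R}$ to $\mathbb{Q}$: if $W \subseteq \mathbb{Q}^d$ were a nonzero proper $\rho(G)$-invariant subspace, then $W \otimes_{\mathbb{Q}} \mathbb{R}$ would be a nonzero proper $\rho^{\mathbb{R}}(G)$-invariant subspace of $\mathbb{R}^d$, contradicting the irreducibility of $\rho^{\mathbb{R}}$; hence $\rho^{\mathbb{Q}}$ is irreducible. For the multiplicities I would use that $\mathbb{Q}G$ and $\mathbb{R}G$ are semisimple, so $\gamma^{\mathbb{Q}}$ and $\gamma^{\mathbb{R}}$ are completely reducible and composition-factor multiplicity coincides with summand multiplicity. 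Writing $\gamma^{\mathbb{Q}} \cong \bigoplus_i m_i \sigma_i$ with the $\sigma_i$ pairwise inequivalent irreducible $\mathbb{Q}G$-modules and $\sigma_1 = \rho^{\mathbb{Q}}$, tensoring with $\mathbb{R}$ gives $\gamma^{\mathbb{R}} \cong \bigoplus_i m_i (\sigma_i \otimes_{\mathbb{Q}} \mathbb{R})$. Since $\rho^{\mathbb{R}} = \sigma_1 \otimes_{\mathbb{Q}} \mathbb{R}$ is irreducible by hypothesis, and distinct irreducible $\mathbb{Q}G$-modules have disjoint sets of irreducible $\mathbb{R}G$-constituents (their complex-irreducible constituents lie in distinct Galois orbits), the multiplicity of $\rho^{\mathbb{R}}$ in $\gamma^{\mathbb{R}}$ equals $m_1$. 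As this multiplicity is one, $m_1 = 1$, which yields multiplicity one of $\rho^{\mathbb{Q}}$ in $\gamma^{\mathbb{Q}}$.

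The heart of the matter is condition~(b), and here the main obstacle is that Theorem~\ref{RLTheorem}, through Definition~\ref{RLDefinition}, only grants the eigenvalue one property for normalizing elements of \emph{finite order}, whereas condition~(b) ranges over all of $N := N_{\GL_d(\mathbb{Z})}(\rho(G))$. I would bridge this gap by proving that $N$ is finite. Because $\rho^{\mathbb{R}}$ is irreducible, the commutant $\End_{\mathbb{R}G}(\mathbb{R}^d)$ is a real division algebra (so $\mathbb{R}$, $\mathbb{C}$, or $\mathbb{H}$) of $\mathbb{R}$-dimension dividing $d$; as $d$ is odd this dimension is $1$, whence the commutant is $\mathbb{R}$ and $\rho^{\mathbb{R}}$ is of real type. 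Consequently $\rho^{\mathbb{R}}$ carries a $G$-invariant positive-definite symmetric form $B$, unique up to a positive scalar. Any $n \in N$ then satisfies $B(nx,ny) = \lambda_n B(x,y)$ for some $\lambda_n > 0$ by uniqueness; comparing determinants and using $\det(n) = \pm 1$ forces $\lambda_n^d = 1$, so $\lambda_n = 1$ and $n$ is an isometry of $B$. Hence $N$ lies in the intersection of the compact group $\mathrm{O}(B)$ with the discrete group $\GL_d(\mathbb{Z})$, which is finite. Every $n \in N$ thus has finite order, and applying the eigenvalue one property of $G$ to the irreducible, non-trivial, odd-dimensional module $\mathbb{R}^d$ furnishes, for each $n$, an element $g \in G$ with $\rho(g)n$ having eigenvalue one; this is precisely condition~(b).

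With conditions~(a) and~(b), the irreducibility of $\rho^{\mathbb{Q}}$, and its multiplicity one in $\gamma^{\mathbb{Q}}$ all in hand, Theorem~\ref{DDRPTheorem} applies and shows that $M$ is an $R_{\infty}$-manifold. I expect the finiteness of $N$, via the real-type and invariant-form step, to be the part requiring genuine care, though it is standard; everything else is a bookkeeping reduction feeding the hypotheses of the two cited theorems.
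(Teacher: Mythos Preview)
Your proof is correct and follows the same overall route the paper indicates: deduce condition~(b) of Theorem~\ref{DDRPTheorem} from Theorem~\ref{RLTheorem}, and descend irreducibility and multiplicity one from~$\mathbb{R}$ to~$\mathbb{Q}$. The paper does not spell out these steps; it simply states that the corollary is a consequence of Theorems~\ref{RLTheorem} and~\ref{DDRPTheorem}, having noted earlier (with a reference to~\cite{SzczOut}) that $N_{\GL_d(\mathbb{Z})}(\rho(G))$ is finite when $\rho^{\mathbb{R}}$ is irreducible. Your invariant-form argument for this finiteness is a clean self-contained replacement for that citation, and your handling of the $\mathbb{R}$-to-$\mathbb{Q}$ descent of multiplicity via disjointness of Galois orbits is exactly what is needed but left implicit in the paper.
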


Corollary~\ref{cor:1} for solvable groups~$G$ has been proved by Lutowski and
Szczepa{\'n}ski in \cite[Theorem~$1.4$]{LuSzcz}.

\subsection{Two methods}
\label{TwoMethods}
Let $(G,V,n)$ and~$\rho$ be as in Definition~\ref{RLDefinition}. Notice that
$(G,V,-n)$ is a triple with the same properties. If $g' \in G$ is such that
$\rho(g')(-n)$ has eigenvalue~$1$, then $\rho(g')n$ has eigenvalue~$-1$. 
So if $(G,V)$ has the eigenvalue one property, there are $g, g' \in G$
such that~$\rho(g)n$ and~$\rho(g')n$ have eigenvalues~$1$ and~$-1$,
respectively.

In the course of our work, we have developed several methods to prove the 
eigenvalue one property for $(G,V,n)$. Let us present the two most important
ones. The first we call the \emph{restriction method}. Suppose there is 
$H \leq G$ and an $\mathbb{R}H$-submodule $V_1 \leq V$ such that~$n$ 
normalizes~$\rho(H)$, and that~$V_1$ is~$n$ invariant. Then $(G,V,n)$ has the 
eigenvalue one property if~$(H,V_1)$ has. Since $\dim(V)$ is odd, the 
restriction of~$V$ to~$H$ contains a homogeneous component~$V_1$ of odd 
dimension. In order to proceed this way, we prove that $(H,V_1)$ has the 
eigenvalue one property, if~$(H,S)$ has, where~$S$ is an irreducible 
$\mathbb{R}H$-submodule of~$V_1$. In an inductive situation we may assume that 
the latter holds, provided~$S$ is non-trivial.  The issue with this method is to 
find a suitable subgroup~$H$ for which, in particular,~$S$ can be chosen to be 
non-trivial.

The second method is the \emph{large degree method}. Roughly speaking, if 
$\dim(V)$ is larger than a bound depending on group theoretical invariants 
derived from~$n$, then $(G,V,n)$ has the eigenvalue one property. Let us be more 
precise. If~$|n|$ is odd, then~$n$ has eigenvalue~$1$, as $\dim(V)$ is odd. 
Suppose then that~$|n|$ is even and let $M_G(n)$ denote the maximum of the 
numbers $|C_{\rho(G)}( n' )|$, where~$n'$ runs through the non-trivial powers 
of~$n$.  If $\dim(V) > (|n|-1)M_G(n)^{1/2}$, then $(G,V,n)$ has the eigenvalue 
one property. Of course, this condition might not be satisfied right away, but 
can be achieved in many cases by replacing~$n$ with $\rho(g)n$ for a suitable 
$g \in G$. Whenever we have to choose such a~$g$ explicitly, we choose~$g$ as
an involution with favorable properties. This method requires a thorough 
knowledge about the automorphisms of~$G$ and their fixed point subgroups.

In many instances, more than one of these methods could be applied. At any stage 
of our work, we have tried to apply the most elementary methods able to deal 
with the case under consideration.

\subsection{Survey of the paper}
Let us now give a survey on the structure of this article and the contents
of the individual sections. Our strategy is by contraposition, i.e.\ we assume
that Theorem~\ref{RLTheorem} is false. By a minimal counterexample we mean a 
finite group of minimal order which does not have the eigenvalue one property. 
Section~\ref{NotationAndPreliminaries} is devoted to notation and preparatory 
material. In Section~\ref{EigenvalueOne}, following a series of reductions, we 
prove that a minimal counterexample is a non-abelian finite simple group. This 
shows in particular that a solvable group has the eigenvalue one property. The 
remaining sections and Part II of our series are devoted to the proof that no
non-abelian finite simple group is a minimal counterexample, thereby proving
Theorem~\ref{RLTheorem}. 

In Section~\ref{SporadicAndAlternating} we develop some criteria which guarantee
that a triple $(G,V,n)$ has the eigenvalue one property. These conditions are
enough to rule out the sporadic simple groups, the Tits simple group and the 
alternating groups (with one exception) as minimal counterexamples; see
Corollary~\ref{AlternatingAndSporadicGroups}. Our argument is based on the fact 
that the automorphism group of any such group is a split extension of the inner 
automorphism group with a group of order at most two. 

It thus remains to consider the finite simple groups of Lie type. We introduce 
these groups and recall some of their properties in 
Section~\ref{SimpleGroupsOfLieType}. Here, we largely follow the 
book~\cite{GLS}. Of particular relevance is the description of the 
automorphisms of these groups, for which we use \cite[Section~$2.5$]{GLS}.

Section~\ref{OddCharacteristic} is devoted to the simple groups~$G$ of Lie type 
of odd characteristic. The real irreducible characters of~$G$ of odd degrees are 
easily classified with Harish-Chandra theory. Such characters are very rare. 
For example, if $G = E_6(q)$, the simple Chevalley group of type~$E_6$ with~$q$ 
odd, then~$G$ has exactly~$8$ real, 
irreducible characters of odd degree, independently of~$q$. The relevant 
information on Harish-Chandra induced characters is obtained by computations 
inside the Weyl group of~$G$. These computations are performed with the Chevie 
system based on GAP3; see~\cite{chevie}, \cite{Michel} and \cite{GAP3}. 
It should be noted, however, that some of the groups of very small Lie rank
require considerable work and lengthy calculations, due to their rather 
restricted subgroup structure. Nevertheless, our arguments are rather 
elementary, only using information about the automorphism group of~$G$ and 
simple facts from Harish-Chandra theory. The main result of this section is
Theorem~\ref{MainOdd}. 

By far the most difficult cases are provided by the finite groups~$G$ of Lie 
type of even characteristic. These will be handled in Part~II of our series.

\section{Notation and preliminaries}
\label{NotationAndPreliminaries}

Here, we introduce our notation, which is mostly standard, and collect 
miscellaneous preliminary results for later reference.

\subsection{Numbers} Let~$m$ be a non-zero integer and~$p$ a prime. We then 
write $m_p$ and $m_{p'}$ for the $p$-part, respectively the $p'$-part of~$m$.

If $x \in \mathbb{R}$, we write $\lfloor x \rfloor$ for the greatest integer 
smaller than or equal to~$x$ and $\lceil x \rceil$ for the smallest integer 
greater than or equal to~$x$.

\subsection{Graphs}
The \textit{valency} of a node of an undirected graph without loops is the 
number of edges emanating from the node. The valency of the node of a Dynkin 
diagram is the valency in the underlying undirected graph. A node of 
valency~$1$ of a tree is called a \textit{leaf}.

\subsection{Groups}
\label{Groups}
Let~$G$ be a group. For $g, x \in G$, we put ${^x\!g} := x gx^{-1}$.
Also, $\ad_x$ denotes the inner automorphism of~$G$ corresponding to~$x$, i.e.\
$\ad_x \colon G \rightarrow G$, $g \mapsto {^x\!g}$. An element $g \in G$ is called
\textit{real in~$G$}, if~$g$ is conjugate to its inverse in~$G$. If the 
surrounding group is clear form the context, we just say that~$g$ is 
\textit{real}. If $X \subseteq G$ is a subset, 
we write $\langle X \rangle$ for the subgroup of~$G$ generated by~$X$. The 
automorphism group of~$G$ is denoted by $\Aut(G)$, the group of inner 
automorphisms by $\Inn(G)$, and $\Out(G) := \Aut(G)/\Inn(G)$ is the group of 
outer automorphisms of~$G$.

If~$G$ is finite and~$p$ is a prime, we write $O^{p'}(G)$ for the smallest 
normal subgroup of~$G$ of $p'$-index.

\subsection{Matrices} Let $\Theta$ be a commutative ring and $n$ a positive 
integer. By $\diag( \zeta_1, \ldots , \zeta_n )$ we denote the diagonal
$(n \times n)$-matrix over~$\Theta$ with entries $\zeta_i \in \Theta$ at
position~$(i,i)$ for $1 \leq i \leq n$, and all other entries equal to~$0$.
This notation is extended to block diagonal matrices, where 
$(1 \times 1)$-blocks are identified with their entries. Thus, e.g.,
if $A_i$ is an $(n_i \times n_i)$-matrix for $1 = 1, 2$ and 
$\zeta, \xi \in \Theta$, then $\diag(A_1,\zeta,\xi,A_2)$ denotes the
square block diagonal matrix
$$\left( \begin{array}{c|c|c|c} A_1 & 0 & 0 & 0 \\ \hline
0 & \zeta & 0 & 0 \\ \hline
0 & 0 & \xi & 0 \\ \hline
0 & 0 & 0 & A_2 \end{array}
\right)
$$
with $n_1+n_2+2$ rows, where the $0$'s indicate matrices of zeroes of the
appropriate sizes.

The superscript~$t$ on a matrix indicates its transpose.

\subsection{Characters and modules}
Let~$G$ be a finite group and let~$K$ be a field. The $KG$-modules considered 
will always be left $KG$-modules and finite dimensional. By $\Irr(G)$ we denote 
the set of characters of the irreducible $\mathbb{C}G$-modules. A complex 
character of~$G$ is called \textit{linear}, if it has degree~$1$. Linear 
characters are irreducible. The trivial character of a subgroup $H \leq G$ is 
denoted by~$1_H$. For $\chi \in \Irr(G)$, we write $\nu_2(\chi)$ for
the Frobenius-Schur indicator of~$\chi$; see \cite[p.~$49$,~$50$]{Isaacs}. The
usual inner product of two complex valued class functions~$\chi$ and~$\psi$
of~$G$ is denoted by $\langle \chi, \psi \rangle$. From the context, there 
should not be any confusion with our notation for subgroup generation.
Suppose that $H \leq G$ and that~$\psi$ and~$\chi$ are $K$-valued class
functions of ~$H$, respectively~$G$. Then $\Ind_H^G( \psi )$ and
$\Res^G_H( \chi )$ denote the induced and restricted class functions of~$G$,
respectively~$H$. 

We collect a few of well known results on real representations of odd degree.

\begin{lem}
\label{AbsolutelyIrreducible}
Let~$V$ be an irreducible $\mathbb{R}G$-module of odd dimension. Then~$V$
is absolutely irreducible.

Also, if $\chi \in \Irr(G)$ is real with $\chi(1)$ odd, then $\nu_2(\chi) = 1$,
i.e.\ $\chi$ is realizable over~$\mathbb{R}$.
\end{lem}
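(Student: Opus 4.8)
The plan is to treat both assertions through the standard trichotomy of irreducible real representations, classified according to the isomorphism type of their endomorphism algebra, which in turn is governed by the Frobenius–Schur indicator.

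For the first assertion, I would apply Schur's lemma to conclude that $D := \End_{\mathbb{R}G}(V)$ is a finite-dimensional division algebra over~$\mathbb{R}$, hence by Frobenius's theorem isomorphic to~$\mathbb{R}$, $\mathbb{C}$, or~$\mathbb{H}$; moreover $V$ is absolutely irreducible precisely when $D = \mathbb{R}$. The key step is to track the complexification $V_{\mathbb{C}} := V \otimes_{\mathbb{R}} \mathbb{C}$, whose endomorphism algebra is $D \otimes_{\mathbb{R}} \mathbb{C}$. If $D = \mathbb{R}$, then $D \otimes_{\mathbb{R}} \mathbb{C} = \mathbb{C}$, so $V_{\mathbb{C}}$ is irreducible and $\dim_{\mathbb{R}} V = \dim_{\mathbb{C}} V_{\mathbb{C}}$, which may be odd. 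If $D = \mathbb{C}$, then $V_{\mathbb{C}} \cong W \oplus \bar{W}$ with $W$ an irreducible $\mathbb{C}G$-module not isomorphic to its complex conjugate; and if $D = \mathbb{H}$, then $D \otimes_{\mathbb{R}} \mathbb{C} \cong M_2(\mathbb{C})$ forces $V_{\mathbb{C}} \cong W \oplus W$ with $W$ irreducible. In the latter two cases $\dim_{\mathbb{R}} V = 2\dim_{\mathbb{C}} W$ is even. Since $\dim_{\mathbb{R}} V$ is assumed odd, only $D = \mathbb{R}$ survives, and $V$ is absolutely irreducible.

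For the second assertion, I would first recall that $\chi$ being real-valued forces $\nu_2(\chi) \in \{1,-1\}$, the value~$0$ being reserved for non-real characters. It therefore suffices to exclude $\nu_2(\chi) = -1$. Here I would invoke the classical interpretation of the indicator: $\nu_2(\chi) = -1$ holds exactly when the $\mathbb{C}G$-module~$W$ affording~$\chi$ carries a non-degenerate $G$-invariant alternating bilinear form. Such a symplectic form forces its underlying space to have even dimension, so $\chi(1) = \dim_{\mathbb{C}} W$ would be even. As $\chi(1)$ is assumed odd, we conclude $\nu_2(\chi) = 1$, i.e.\ $\chi$ is realizable over~$\mathbb{R}$. (Equivalently, one may argue that the real Schur index $m_{\mathbb{R}}(\chi) \in \{1,2\}$ divides $\chi(1)$, hence equals~$1$ when $\chi(1)$ is odd.)

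The two statements are two faces of the same dichotomy, and each reduces, after the bookkeeping above, to a single clean input: for part one, that the complex and quaternionic types contribute a factor of~$2$ to the real dimension, and for part two, that a non-degenerate invariant alternating form forces even degree. Everything else is an application of Schur's lemma, Frobenius's classification of real division algebras, and the basic Frobenius–Schur theory recorded in the cited pages of~\cite{Isaacs}. I therefore anticipate no substantial obstacle; the only point requiring genuine content is the parity statement in the symplectic case, and the essence of the argument is simply that oddness of the dimension rules out the complex and quaternionic types.
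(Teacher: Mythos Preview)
Your proposal is correct and follows essentially the same approach as the paper: both arguments rule out the non-real cases by observing that the complexification then splits into two $\mathbb{C}G$-modules of equal dimension, and both handle the second assertion via the standard Frobenius--Schur dichotomy for real characters together with the parity obstruction. The paper's version is simply terser, asserting the splitting of $\mathbb{C} \otimes_{\mathbb{R}} V$ directly and citing \cite[p.~$58$]{Isaacs} rather than unpacking the classification of $\End_{\mathbb{R}G}(V)$ or the symplectic-form interpretation, but the content is the same.
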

\begin{proof}
If~$V$ is not absolutely irreducible, then $\mathbb{C} \otimes_{\mathbb{R}} V$
is a direct sum of two $\mathbb{C}G$-modules of equal dimension, contradicting
the odd-dimensionality of~$V$.

As $\chi$ is real, $\nu_2(\chi) = 1$ or $\nu_2(\chi) = -1$; see 
\cite[Theorem~$4.5$]{Isaacs}. As $\chi(1)$ is odd, $\nu_2(\chi) = 1$ and $\chi$ 
is afforded by a real representation; see \cite [p.~$58$]{Isaacs}.
\end{proof}

Let~$V$ be an irreducible $\mathbb{R}G$-module of odd dimension. In view of the 
above lemma, the character~$\chi$ of~$V$ equals the character 
of~$\mathbb{C} \otimes_{\mathbb{R}} V$, and we view~$\chi$ as an element 
of~$\Irr(G)$.

\begin{lem}
\label{OddDegreeClifford}
Let $V$ be an irreducible $\mathbb{R}G$-module of odd dimension and let 
$H \unlhd G$. Suppose $$\Res^G_H( V ) = V_1 \oplus \cdots \oplus V_r,$$
where the $V_i$ are the homogeneous components of $\Res^G_H( V )$.
Then each $V_i$ has odd dimension. If $|H|$ is odd, $H$ acts trivially 
on~$V$.
\end{lem}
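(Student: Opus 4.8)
The plan is to prove Lemma~\ref{OddDegreeClifford} using standard Clifford theory together with the odd-dimensionality hypothesis. The two assertions are somewhat independent, so I would handle them in turn, but both rest on the same basic observation: since $\dim V$ is odd, any partition of $\dim V$ into equal parts forces the number of parts to be odd.

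First I would establish that each homogeneous component $V_i$ has odd dimension. By Clifford theory (applied to real representations), the group $G$ permutes the homogeneous components $V_1, \ldots, V_r$ of $\Res^G_H(V)$ transitively, since $V$ is irreducible. Consequently all the $V_i$ have the \emph{same} dimension, say $\dim V_i = m$ for all $i$, and $\dim V = rm$. Because $\dim V$ is odd, both $r$ and $m$ must be odd; in particular each $\dim V_i = m$ is odd, as claimed. This part is essentially immediate once transitivity of the $G$-action on the homogeneous components is invoked.

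For the second assertion, suppose $|H|$ is odd; I want to show $H$ acts trivially on $V$. Fix one homogeneous component $V_1$, which by the first part has odd dimension, and let $S$ be an irreducible $\mathbb{R}H$-submodule of $V_1$, so that $V_1$ is $S$-isotypic. Since $\dim V_1$ is odd and $V_1$ is a direct sum of copies of $S$, the dimension $\dim S$ divides an odd number and hence $\dim S$ is odd; by Lemma~\ref{AbsolutelyIrreducible}, $S$ is absolutely irreducible, so its character $\chi_S$ lies in $\Irr(H)$ and $\chi_S(1)$ is odd. Now I would use the arithmetic of characters of odd-order groups: any $\chi \in \Irr(H)$ with $H$ of odd order and $\chi$ real must be trivial, because the only real-valued irreducible character of an odd-order group is $1_H$ (a nontrivial irreducible character $\chi$ of an odd-order group satisfies $\chi \neq \bar\chi$, as no nontrivial element is real in a group of odd order). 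Since $S$ is realizable over $\mathbb{R}$, its character is real, forcing $\chi_S = 1_H$ and thus $S$ trivial. Therefore $H$ acts trivially on every irreducible constituent of $V_1$, hence trivially on $V_1$, and by transitivity of the $G$-conjugation on the components (which permutes $H$-trivial modules to $H$-trivial modules since $H \unlhd G$) the action of $H$ is trivial on all of $V$.

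The main obstacle is the second assertion, specifically the step identifying $S$ as the trivial module. The key input is the classical fact that a group of odd order has no nontrivial real irreducible characters, equivalently that $1$ is the only real element of an odd-order group; this follows from counting real conjugacy classes via the Frobenius–Schur involution count, or directly from the observation that $g$ real and $|g|$ odd forces $g = g^{-1}$, i.e.\ $g = 1$. I would cite this standard fact rather than reprove it. The verification that $\dim S$ is odd and hence $S$ is absolutely irreducible (via Lemma~\ref{AbsolutelyIrreducible}) is the bridge that lets us pass from the real setting to ordinary character theory, and care is needed to ensure the realizability of $S$ over $\mathbb{R}$ is correctly invoked so that $\chi_S$ is genuinely real-valued.
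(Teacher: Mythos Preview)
Your proposal is correct and follows essentially the same approach as the paper: both use Clifford's transitivity to deduce equal (hence odd) dimensions of the $V_i$, then pick an irreducible $\mathbb{R}H$-constituent~$S$ of~$V_1$, observe it has odd dimension and is therefore absolutely irreducible, and conclude~$S$ is trivial because an odd-order group has no nontrivial real irreducible character. The only cosmetic difference is that the paper finishes by noting $r=1$ (there is only one homogeneous component once the isotype is trivial), whereas you argue via $G$-conjugacy of the components; both reach the same conclusion.
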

\begin{proof}
As the $V_i$ are conjugate by the action of~$G$ (see 
\cite[Theorem~$6.5$]{Isaacs}), they all have the same dimension.

Now suppose that $|H|$ is odd, and let $S$ be an irreducible constituent 
of~$V_1$. Then~$S$ has odd dimension and thus is an absolutely irreducible
$\mathbb{R}H$-module. It follows that~$S$ is the trivial module, and thus
$V_1$ is a direct sum of trivial modules and $r = 1$. This yields our 
second assertion.
\end{proof}

\begin{lem}
\label{RestrictionReduction}
Let $H \leq G$. Let~$V$ be a non-trivial irreducible $\mathbb{R}G$-module of
odd dimension. Then there is an irreducible $\mathbb{R}H$-module~$S$ of odd
dimension which occurs with odd multiplicity in $\Res^G_H( V )$. If $C \unlhd H$
has odd order, then~$C$ acts trivially on~$S$.
\end{lem}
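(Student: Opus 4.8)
The plan is to analyze the restriction $\Res^G_H(V)$ via the decomposition into homogeneous components and carefully track dimension parities. Since $V$ has odd dimension and is absolutely irreducible by Lemma~\ref{AbsolutelyIrreducible}, I would write $\Res^G_H(V) = V_1 \oplus \cdots \oplus V_r$ as a sum of homogeneous components. However, Lemma~\ref{OddDegreeClifford} is stated for normal subgroups $H \unlhd G$, whereas here $H$ is merely a subgroup, so the homogeneous-component machinery does not apply directly. Instead, I would decompose $\Res^G_H(V)$ into its isotypic components over the simple $\mathbb{R}H$-modules, or equivalently write it as a direct sum of irreducible $\mathbb{R}H$-modules $\Res^G_H(V) \cong \bigoplus_i S_i^{\oplus m_i}$ with the $S_i$ pairwise non-isomorphic.

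The key step is a parity count. Since $\dim V$ is odd, we have $\sum_i m_i \dim(S_i)$ odd, so at least one summand $m_i \dim(S_i)$ is odd, forcing both $m_i$ and $\dim(S_i)$ to be odd for that index. Setting $S := S_i$ gives an irreducible $\mathbb{R}H$-module of odd dimension occurring with odd multiplicity $m_i$ in $\Res^G_H(V)$. This immediately yields the first assertion. By Lemma~\ref{AbsolutelyIrreducible}, such an $S$ of odd dimension is automatically absolutely irreducible, which will matter for the last claim.

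For the final statement, suppose $C \unlhd H$ has odd order. I would restrict $S$ further to $C$. Because $C$ is normal in $H$, Lemma~\ref{OddDegreeClifford} applies to the pair $C \unlhd H$ acting on the odd-dimensional absolutely irreducible module $S$: since $|C|$ is odd, the lemma concludes directly that $C$ acts trivially on $S$. The one subtlety to verify is that $S$ genuinely satisfies the hypotheses of Lemma~\ref{OddDegreeClifford} as an $\mathbb{R}H$-module, namely that it is irreducible of odd dimension, which we have arranged by the parity argument.

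The main obstacle I anticipate is the gap between the normal-subgroup hypothesis in Lemma~\ref{OddDegreeClifford} and the general-subgroup setting of the present statement; the former cannot be invoked for the decomposition of $\Res^G_H(V)$ itself but only afterwards for $C \unlhd H$ acting on $S$. The resolution is precisely to isolate the odd-dimensional, odd-multiplicity constituent $S$ by an elementary parity argument rather than by Clifford theory over $H$, and only then bring the normality of $C$ in $H$ to bear.
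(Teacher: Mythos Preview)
Your proposal is correct and follows essentially the same route as the paper: decompose $\Res^G_H(V)$ into its homogeneous (isotypic) components, pick one of odd dimension by a parity count to extract $S$ with odd dimension and odd multiplicity, and then invoke Lemma~\ref{OddDegreeClifford} for $C \unlhd H$ acting on~$S$. One small clarification: the isotypic decomposition you write down is exactly the ``homogeneous component'' decomposition the paper uses, and it is available for any subgroup $H \leq G$; what requires normality in Lemma~\ref{OddDegreeClifford} is only the equal-dimension conclusion, not the decomposition itself.
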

\begin{proof}
Write $\Res^G_H( V ) = V_1 \oplus \cdots \oplus V_r$ for the decomposition of
$\Res^G_H( V )$ into homogeneous components. Then~$V_i$ has odd dimension for
some $1 \leq i \leq r$, and an irreducible constituent~$S$ of~$V_i$ satisfies 
the requirements. We are done by Lemma~\ref{OddDegreeClifford}.
\end{proof}

\begin{lem}
\label{CharacterEstimate}
Let~$G'$ be a finite group and let $G \unlhd G'$ such that $G'/G$ is abelian. 
Let $\chi' \in \Irr(G')$ such that $\Res^{G'}_G( \chi' )$ is irreducible.
Then 
$$|\chi'(x)| \leq |C_G( x )|^{1/2}$$
for all $x \in G'$.
\end{lem}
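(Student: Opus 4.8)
The plan is to combine the second orthogonality relation for~$G'$ with the observation that twisting~$\chi'$ by the linear characters of the abelian quotient~$G'/G$ yields many distinct irreducible characters of~$G'$, all of the same absolute value at~$x$.

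First I would set $\chi := \Res^{G'}_G(\chi')$ and use that~$G'/G$ is abelian, so that $\Irr(G'/G)$ consists of exactly $[G':G]$ linear characters; inflating these to~$G'$ gives linear characters~$\lambda$ of~$G'$ that are trivial on~$G$. Since~$\chi$ is irreducible, Gallagher's theorem (see \cite{Isaacs}) guarantees that the products~$\chi'\lambda$, for $\lambda \in \Irr(G'/G)$, are pairwise distinct irreducible characters of~$G'$. As each~$\lambda$ is linear, $|\lambda(x)| = 1$ for all $x \in G'$, and hence $|(\chi'\lambda)(x)| = |\chi'(x)|$.

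Next I would invoke column orthogonality, $\sum_{\psi \in \Irr(G')} |\psi(x)|^2 = |C_{G'}(x)|$, and discard all summands except those indexed by the family $\{\chi'\lambda : \lambda \in \Irr(G'/G)\}$. Since these are $[G':G]$ distinct characters, each contributing $|\chi'(x)|^2$, this yields the inequality $[G':G]\cdot|\chi'(x)|^2 \leq |C_{G'}(x)|$.

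Finally I would pass from~$C_{G'}(x)$ to $C_G(x) = C_{G'}(x) \cap G$. The second isomorphism theorem embeds $C_{G'}(x)/C_G(x)$ into~$G'/G$, so $[C_{G'}(x):C_G(x)] \leq [G':G]$. Combining this with the previous inequality gives
$$|\chi'(x)|^2 \leq \frac{|C_{G'}(x)|}{[G':G]} \leq \frac{|C_{G'}(x)|}{[C_{G'}(x):C_G(x)]} = |C_G(x)|,$$
which is the desired bound. The argument is essentially routine; the only points requiring care are the distinctness of the twists~$\chi'\lambda$, for which the irreducibility of $\Res^{G'}_G(\chi')$ is exactly the hypothesis needed, and keeping the index inequality pointed in the right direction, so that the larger denominator~$[G':G]$ dominates $[C_{G'}(x):C_G(x)]$.
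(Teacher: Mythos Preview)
Your proof is correct and follows essentially the same argument as the paper's: both use Gallagher's theorem to obtain $[G':G]$ distinct twists of~$\chi'$ with equal absolute value at~$x$, apply the second orthogonality relation, and combine with the index bound $[C_{G'}(x):C_G(x)] \leq [G':G]$ coming from the second isomorphism theorem. The paper merely arranges the two inequalities in the opposite order.
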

\begin{proof}
Let $x\in G'$.
We have $C_{G'}(x)/C_G(x) = C_{G'}(x)/(G \cap C_{G'}(x)) \cong GC_{G'}(x)/G 
\leq G'/G$. Thus $|C_{G'}(x)| \leq |G'/G||C_G(x)|$.

As $\Res^{G'}_G( \chi' )$ is irreducible, $\beta\chi' \in \Irr(G')$ for
every $\beta \in \Irr(G'/G)$, and $\beta\chi' \neq \beta'\chi'$ for 
$\beta, \beta' \in \Irr(G'/G)$ with $\beta \neq \beta'$. Now $|\beta\chi'(x)|
= |\chi'(x)|$ for every $\beta \in \Irr(G'/G)$. By the 
second orthogonality relation we obtain $|G'/G||\chi'(x)|^2 \leq |C_{G'}(x)|$.
This yields our claim.
\end{proof}

\begin{lem}
\label{AllConstituents}
Let the notation and hypothesis be as in {\rm Lemma~\ref{CharacterEstimate}}.
Let $x \in G'$ and put 
$M := \max\{ |C_G(y)| \mid y \in \langle x \rangle \setminus \{ 1 \} \}$.
Suppose that $\chi'(1) > (|x| - 1)M^{1/2}$. Then 
$\Res^{G'}_{\langle x \rangle}( \chi' )$ contains every irreducible character 
of~$\langle x \rangle$ as a constituent.
\end{lem}
\begin{proof}
Let $\lambda \in \Irr( \langle x \rangle )$. Then 
$$|x|\langle \Res^{G'}_{\langle x \rangle}( \chi' ), \lambda \rangle = \chi'(1) + 
\sum_{1 \neq y \in \langle x \rangle} \chi'(y)\lambda(y^{-1}).$$
Moreover, 
\begin{eqnarray*}
\left|\sum_{1 \neq y \in \langle x \rangle} \chi'(y)\lambda(y^{-1})\right| & \leq &
\sum_{1 \neq y \in \langle x \rangle} |\chi'(y)\lambda(y^{-1})| \\
 & = & \sum_{1 \neq y \in \langle x \rangle} |\chi'(y)| \\
& \leq & \sum_{1 \neq y \in \langle x \rangle} |C_G(y)|^{1/2} \\
& \leq & (|x|-1)M^{1/2},
\end{eqnarray*}
where the penultimate estimate arises from Lemma~\ref{CharacterEstimate}.
This proves our assertion.
\end{proof}

\section{The reduction to finite simple groups}
\label{EigenvalueOne}

In this section~$G$ is a finite group. Tensor products of $\mathbb{R}$-vector
spaces are tensor products over~$\mathbb{R}$, and we usually write~$\otimes$
instead of~$\otimes_\mathbb{R}$. The phrase ``eigenvalue one property'' in its 
three specifications introduced in Definition~\ref{RLDefinition}, will 
henceforth be abbreviated as ``$E1$-property''.

\subsection{The restriction method}
Working towards the proof of Theorem~\ref{RLTheorem}, we first establish some 
reductions. We will use the following setup. Let~$V$ be an 
$\mathbb{R}G$-module, and let $\rho$ denote the representation of~$G$ 
afforded by~$V$. Let $n \in \GL(V)$ be of finite order normalizing~$\rho(G)$.

\begin{lem}
\label{InvariantSubmodule}
Let~$V_1$ denote an $n$-invariant $\mathbb{R}G$-submodule of~$V$. If~$(G,V_1)$ 
has the $E1$-property, then $(G,V,n)$ has the $E1$-property.
\end{lem}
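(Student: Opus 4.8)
The plan is to restrict the entire configuration to the submodule~$V_1$ and then invoke the assumed $E1$-property of~$(G,V_1)$. First I would observe that, since~$V_1$ is an $\mathbb{R}G$-submodule, each operator~$\rho(g)$ with $g \in G$ leaves~$V_1$ invariant and hence restricts to an automorphism of~$V_1$; write~$\rho_1$ for the resulting representation of~$G$ on~$V_1$, so that $\rho_1(g) = \rho(g)|_{V_1}$. Because~$V_1$ is $n$-invariant by hypothesis,~$n$ likewise restricts to an element $n_1 := n|_{V_1} \in \GL(V_1)$, and $n_1$ is of finite order because~$n$ is.

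The one point that deserves care is to verify that $n_1$ normalizes $\rho_1(G)$. Fix $g \in G$. Since~$n$ normalizes~$\rho(G)$, there is $h \in G$ with $n\rho(g)n^{-1} = \rho(h)$. Both sides preserve~$V_1$, so restricting this identity to~$V_1$ gives $n_1\rho_1(g)n_1^{-1} = \rho_1(h) \in \rho_1(G)$. Thus $n_1 \in N_{\GL(V_1)}(\rho_1(G))$ is an element of finite order.

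Finally I would apply the $E1$-property of~$(G,V_1)$ to this particular~$n_1$. By Definition~\ref{RLDefinition} the triple $(G,V_1,n_1)$ has the $E1$-property, so there exist $g \in G$ and a nonzero $v \in V_1$ with $\rho_1(g)n_1 v = v$. Since~$\rho(g)$ and~$n$ both leave~$V_1$ invariant, $\rho_1(g)n_1$ is exactly the restriction of $\rho(g)n$ to~$V_1$, and therefore $\rho(g)n v = v$ with $v \in V_1 \subseteq V$ nonzero. Hence $\rho(g)n$ has eigenvalue~$1$ on~$V$, which is precisely the assertion that $(G,V,n)$ has the $E1$-property. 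I expect no genuine obstacle here: the whole content is the bookkeeping of the second paragraph, namely that restriction to~$V_1$ carries the normalizing, finite-order element~$n$ to a normalizing, finite-order element~$n_1$ of $\GL(V_1)$, after which the desired eigenvector is inherited from~$V_1$.
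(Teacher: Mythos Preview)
Your proof is correct and follows essentially the same approach as the paper's own proof: restrict~$\rho$ and~$n$ to~$V_1$, note that the restriction~$n_1$ is a finite-order element of $\GL(V_1)$ normalizing~$\rho_1(G)$, and then inherit the eigenvector from the $E1$-property of~$(G,V_1)$. The paper omits the explicit verification in your second paragraph, but otherwise the arguments are identical.
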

\begin{proof}
Let~$\rho_1$ denote the representation of~$G$ afforded by~$V_1$, and let~$n_1$
denote the restriction of~$n$ to an automorphism of~$V_1$. Then 
$n_1 \in \GL(V_1)$ has finite order and normalizes $\rho_1(G)$. By assumption, 
there exists $g \in G$ and a non-trivial vector $v \in V_1$ fixed 
by~$\rho_1(g)n_1$. Thus $\rho(g)n$ has eigenvalue~$1$.
\end{proof}

\begin{lem}
\label{DirectSums}
Let~$S$ be an irreducible $\mathbb{R}G$-module of odd dimension such that 
$(G,S)$ has the $E1$-property. If $V$ is the direct sum of an odd number of 
copies of~$S$, then $(G,V)$ has the $E1$-property.
\end{lem}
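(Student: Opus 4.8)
The plan is to reduce the statement to the $E1$-property of $(G,S)$ by determining the precise shape of the finite-order elements of $N_{\GL(V)}(\rho(G))$. Since $\dim S$ is odd, $S$ is absolutely irreducible by Lemma~\ref{AbsolutelyIrreducible}, so the $\R$-subalgebra of $\End(S)$ generated by the image of $G$ is all of $\End(S)$. Writing $V = S \otimes W$, where $W$ is the $k$-dimensional multiplicity space with $k$ odd, and where $\rho(g) = \rho_S(g) \otimes \id_W$ for the representation $\rho_S$ afforded by $S$, the $\R$-subalgebra $A \subseteq \End(V)$ generated by $\rho(G)$ is therefore $\End(S) \otimes \id_W \cong M_d(\R)$, with $d := \dim S$.

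Let $n \in N_{\GL(V)}(\rho(G))$ be of finite order $N$. The first and main step is to prove that $n$ factors as $n = s_0 \otimes m$ with $s_0 \in N_{\GL(S)}(\rho_S(G))$ and $m \in \GL(W)$. Since $n$ normalizes $\rho(G)$, it normalizes $A$, so conjugation by $n$ is an $\R$-algebra automorphism of $A \cong M_d(\R)$; by the Skolem--Noether theorem this automorphism is inner, induced by some unit $s_0 \otimes \id_W \in A^\times = \GL(S) \otimes \id_W$. Then $(s_0 \otimes \id_W)^{-1} n$ centralizes $A$, and by the double centralizer theorem the centralizer of $\End(S) \otimes \id_W$ in $\End(S) \otimes \End(W)$ is $\id_S \otimes \End(W)$; hence $(s_0 \otimes \id_W)^{-1} n = \id_S \otimes m$ for some $m \in \GL(W)$, giving $n = s_0 \otimes m$. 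A direct computation of $n \rho(g) n^{-1} = (s_0 \rho_S(g) s_0^{-1}) \otimes \id_W$ shows that $s_0$ normalizes $\rho_S(G)$. This identification of the normalizer is the crux of the argument; everything afterwards is elementary.

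With the decomposition in hand, I would finish as follows. From $n^N = s_0^N \otimes m^N = \id_V$ and the elementary fact that $P \otimes Q = \id_V$ forces $P = \lambda\,\id_S$ and $Q = \lambda^{-1}\id_W$ for a scalar $\lambda$, we obtain $s_0^N = \lambda\,\id_S$ and $m^N = \lambda^{-1}\id_W$ with $\lambda \in \R^\times$. Because $k$ is odd, the real matrix $m$ has a real eigenvalue $\nu$, and $m^N = \lambda^{-1}\id_W$ yields $\nu^N = \lambda^{-1}$. Set $t_0 := \nu s_0$; then $t_0^N = \nu^N s_0^N = \id_S$, so $t_0$ has finite order, and $t_0 \in N_{\GL(S)}(\rho_S(G))$ since scaling by a scalar preserves normalizers. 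As $(G,S)$ has the $E1$-property, there is $g \in G$ such that $\rho_S(g) t_0$ has eigenvalue $1$, i.e.\ $\rho_S(g) s_0$ has the real eigenvalue $\nu^{-1}$. Choosing a real eigenvector $u$ of $\rho_S(g) s_0$ for $\nu^{-1}$ and a real eigenvector $w$ of $m$ for $\nu$, the vector $u \otimes w$ is fixed by $\rho(g) n = (\rho_S(g) s_0) \otimes m$, so $\rho(g) n$ has eigenvalue $1$. As $n$ was an arbitrary finite-order element of $N_{\GL(V)}(\rho(G))$, this shows that $(G,V)$ has the $E1$-property.
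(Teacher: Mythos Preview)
Your proof is correct and takes a genuinely different route from the paper's. The paper argues inside the finite group $A := \langle \rho(G), n \rangle$: since $\dim V$ is odd it contains an irreducible $\mathbb{R}A$-submodule $V_1$ of odd dimension, and because $S$ is absolutely irreducible and $A/\rho(G)$ is cyclic, the character of $S$ extends to~$A$ and Gallagher's theorem forces every irreducible $\mathbb{R}A$-constituent of $V$ to have dimension $\dim S$; thus $V_1$ is an $n$-invariant copy of~$S$, and Lemma~\ref{InvariantSubmodule} finishes. You instead write $V = S \otimes W$ and use Skolem--Noether together with the double centralizer theorem to factor $n = s_0 \otimes m$, then exploit that $\dim W$ is odd to obtain a real eigenvalue~$\nu$ of~$m$, rescale $s_0$ by~$\nu$ to get a finite-order normalizer of $\rho_S(G)$, and invoke the $E1$-property of $(G,S)$ directly to build a fixed pure tensor. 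Your argument is more explicit and ring-theoretic, avoiding Clifford theory entirely; the paper's argument stays within the character-theoretic framework used elsewhere and feeds into the restriction machinery of Lemma~\ref{InvariantSubmodule}.
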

\begin{proof}
Put $A := \langle \rho(G), n \rangle$. This is a finite subgroup of $\GL(V)$ 
and $V$ is an $\mathbb{R}A$-module in the natural way. Let~$V_1$ be an 
irreducible $\mathbb{R}A$-submodule of~$V$ of odd dimension, and let 
$S_1 \leq V_1$ be an irreducible $\mathbb{R}\rho(G)$-submodule of~$V_1$. 
Then~$V_1$ and~$S_1$ are absolutely irreducible by 
Lemma~\ref{AbsolutelyIrreducible}. The character of $\mathbb{C} \otimes S_1$ is 
$A$-invariant, as $nS_1$ is an irreducible $\mathbb{R}\rho(G)$-submodule of~$V$, 
and thus  isomorphic to~$S_1$. 

Since $A/\rho(G)$ is cyclic, the character of $\mathbb{C} \otimes S_1$ extends 
to~$A$; see \cite[Corollary~$11.22$]{Isaacs}. Moreover, all absolutely 
irreducible $\mathbb{R}A$-submodules of $\Ind_{\rho(G)}^A( S_1 )$ have dimension 
$\dim ( S )$; see \cite[Corollary~$6.17$ (Gallagher's theorem)]{Isaacs}. 
As~$V_1$ is isomorphic to one of these, we have $V_1 = S_1$. The claim now 
follows from Lemma~\ref{InvariantSubmodule}. 
\end{proof}

\begin{lem}
\label{InvariantSubgroup}
Let $H \leq G$ be a subgroup of~$G$ such that~$n$ normalizes 
$\rho(H)$.  Let~$V_1$ denote an $n$-invariant $\mathbb{R}H$-submodule 
of~$\Res^G_H(V)$ such that $(H,V_1)$ has the $E1$-property.
Then $(G,V,n)$ has the $E1$-property.
\end{lem}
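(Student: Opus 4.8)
The plan is to reduce the statement about $(G,V,n)$ to the hypothesis on $(H,V_1)$ by producing, from a fixed point of some $\rho_1(h)n_1$ inside $V_1$, a genuine eigenvalue-one element of the form $\rho(g)n$ acting on $V$. The crucial observation is that $V_1$ is only an $\mathbb{R}H$-submodule of $\Res^G_H(V)$, not an $\mathbb{R}G$-submodule of $V$, so Lemma~\ref{InvariantSubmodule} does not apply directly; the role of $H$ and the restricted action must be brought into play carefully. First I would observe that $n$ normalizes $\rho(H)$ by hypothesis and that $V_1$ is $n$-invariant, so writing $\rho_1 := \Res^G_H(\rho)$ for the representation of $H$ on $\Res^G_H(V)$ and $n_1$ for the restriction of $n$ to $V_1$, the pair $(n_1,\rho_1)$ satisfies exactly the conditions of Definition~\ref{RLDefinition} relative to $H$ and $V_1$: namely $n_1 \in \GL(V_1)$ has finite order (as a restriction of the finite-order automorphism $n$ to an invariant subspace) and $n_1$ normalizes $\rho_1(H)|_{V_1}$.

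Next I would invoke the $E1$-property of $(H,V_1)$ applied to the element $n_1$. This yields an element $h \in H$ and a nonzero vector $v \in V_1$ with $\rho_1(h)n_1 \, v = v$. The point is then to transport this to the full module: since $\rho_1(h)$ is the restriction of $\rho(h)$ to $V_1 \leq V$ (because $V_1$ sits inside $\Res^G_H(V)$, on which $H$ acts via $\rho$), and $n_1$ is the restriction of $n$, the vector $v \in V \setminus \{0\}$ satisfies $\rho(h)n \, v = \rho_1(h)n_1 \, v = v$. Taking $g := h \in H \leq G$, we conclude that $\rho(g)n$ has eigenvalue~$1$ on $V$, which is precisely the $E1$-property for $(G,V,n)$.

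The only genuine subtlety, and the step I would guard most carefully, is the verification that restriction of the operators is compatible across the inclusion $V_1 \hookrightarrow V$ — that is, that $\rho(h)n$ really does restrict to $\rho_1(h)n_1$ on the $n$-invariant, $\rho(H)$-invariant subspace $V_1$. This is where the hypotheses that $n$ normalizes $\rho(H)$ and that $V_1$ is simultaneously an $\mathbb{R}H$-submodule and $n$-invariant are used jointly: $\rho(h)$ preserves $V_1$ because $V_1$ is an $\mathbb{R}H$-submodule, and $n$ preserves $V_1$ by assumption, so the composite $\rho(h)n$ maps $V_1$ into itself and its restriction agrees with $\rho_1(h)n_1$. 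I do not expect any serious obstacle here; the lemma is essentially a bookkeeping extension of Lemma~\ref{InvariantSubmodule} to the situation where the invariant subspace carries a smaller group action, and the proof should be short.
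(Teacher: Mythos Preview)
Your proposal is correct and follows essentially the same reasoning as the paper. The only difference is organizational: the paper observes that Lemma~\ref{InvariantSubmodule} \emph{does} apply directly, once one replaces the triple $(G,V,n)$ by $(H,\Res^G_H(V),n)$, whereas you unpack the content of that lemma inline; the resulting argument---find $h\in H$ and a nonzero $v\in V_1$ fixed by the restriction, then note $\rho(h)n\,v=v$ in $V$---is identical.
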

\begin{proof}
Apply Lemma~\ref{InvariantSubmodule} with $(G,V,n)$ replaced by 
$(H,\Res^G_H( V ),n)$. Write $\rho_H$ for the representation of~$H$ afforded by 
$\Res^G_H( V )$. Then~$n$ normalizes $\rho_H(H) = \rho(H)$ by assumption. 
Moreover,~$V_1$ is $n$-invariant and $(H,V_1)$ has the $E1$-pro\-per\-ty. By 
Lemma~\ref{InvariantSubmodule}, there is $g \in H$ such that 
$\rho_H(g) n = \rho(g) n$ has eigenvalue~$1$.
\end{proof}
 
\begin{cor}
\label{NormalSubgroups}
Suppose that~$V$ is irreducible and of odd dimension.

Let $H \unlhd G$ be a normal subgroup such that $\{ 1 \} \neq \rho(H)$ is 
characteristic in $\rho(G)$. Suppose that $(H,S)$ has the $E1$-property for
some irreducible submodule~$S$ of $\Res_H^G(V)$. Then $(G,V)$ has the 
$E1$-property.

If~$H$ has the $E1$-property, then $(G,V)$ has the $E1$-property.
\end{cor}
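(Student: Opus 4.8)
The plan is to prove the first assertion by the restriction method, reducing matters to a single homogeneous component of $\Res^G_H(V)$, and then to derive the second assertion from the first by means of Lemma~\ref{RestrictionReduction}. For the first assertion, fix $n \in N_{\GL(V)}(\rho(G))$ of finite order; the task is to produce $g \in G$ with $\rho(g)n$ having eigenvalue~$1$. I would first form $A := \langle \rho(G), n \rangle$, a \emph{finite} subgroup of $\GL(V)$ in which $\rho(G)$ is normal. Since $\rho(H)$ is characteristic in $\rho(G)$, it is then normal in $A$; in particular $n$ normalizes $\rho(H)$. Next I would decompose $\Res^G_H(V) = V_1 \oplus \cdots \oplus V_r$ into homogeneous components, numbered so that the given submodule $S$ lies in $V_1$. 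By Lemma~\ref{OddDegreeClifford} each $V_i$ has odd dimension, and as the irreducible constituents of $V_1$ are all isomorphic to $S$, of the common odd dimension $\dim S$, the component $V_1$ is a direct sum of an odd number of copies of $S$. Since $(H,S)$ has the $E1$-property, Lemma~\ref{DirectSums} shows that $(H,V_1)$ has the $E1$-property.

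The heart of the argument is then to arrange an element normalizing $\rho(H)$ that leaves $V_1$ invariant, so that Lemma~\ref{InvariantSubgroup} can be applied with the subgroup $H$ and the submodule $V_1$. Here I would use that $\rho(G)$ permutes the homogeneous components transitively (Clifford's theorem, as already invoked in the proof of Lemma~\ref{OddDegreeClifford}), and that $n$, normalizing $\rho(H)$, also permutes them. The correction step is to replace $n$ by an element fixing $V_1$: choosing $g_0 \in G$ with $\rho(g_0)(n V_1) = V_1$, which is possible by transitivity, the element $n_1 := \rho(g_0) n$ stabilizes $V_1$ setwise. It is crucial at this point that $n_1$ lies in the finite group $A$, so that $n_1$ again has finite order and normalizes both $\rho(G)$ and $\rho(H)$; this legitimizes the substitution, because exhibiting $g \in G$ with $\rho(g)n_1$ having eigenvalue~$1$ is equivalent to the desired statement for $n$ (as $\rho(g)n_1 = \rho(gg_0)n$). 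With $V_1$ now $n_1$-invariant and $(H,V_1)$ enjoying the $E1$-property, Lemma~\ref{InvariantSubgroup} yields such a $g$, completing the first assertion.

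For the second assertion I would assume $H$ has the $E1$-property and apply Lemma~\ref{RestrictionReduction} to obtain an irreducible $\mathbb{R}H$-module $S$ of odd dimension occurring in $\Res^G_H(V)$. This $S$ cannot be the trivial module: otherwise all constituents of $\Res^G_H(V)$ would be trivial, being $G$-conjugate, forcing $\rho(H) = \{1\}$ contrary to hypothesis. Hence $S$ is non-trivial of odd dimension, so $(H,S)$ has the $E1$-property, and the first assertion applies. I expect the step needing the most care to be the reduction in the second paragraph: namely checking that $n$ genuinely induces a permutation of the homogeneous components (so that $nV_1$ is itself a component), and that passing from $n$ to $\rho(g_0)n$ is harmless — both the preservation of finite order, which is secured by the finiteness of $A$, and the equivalence of the eigenvalue-one conclusion for $n$ and for $\rho(g_0)n$.
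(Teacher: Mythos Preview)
Your proof is correct and follows essentially the same approach as the paper: decompose $\Res^G_H(V)$ into homogeneous components, use transitivity of the $G$-action to replace $n$ by $\rho(g_0)n$ stabilizing $V_1$, apply Lemma~\ref{DirectSums} to get the $E1$-property for $(H,V_1)$, and conclude via Lemma~\ref{InvariantSubgroup}. The only minor difference is that for the second assertion the paper argues slightly more directly (observing that all irreducible submodules of $\Res^G_H(V)$ are non-trivial and odd-dimensional) rather than invoking Lemma~\ref{RestrictionReduction}, but your route is equally valid.
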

\begin{proof}
As $\rho(H) \neq \{ 1 \}$, the irreducible submodules of~$\Res_H^G(V)$ are 
non-trivial. As they are also odd-dimensional, the second statement follows 
from the first.

Write $\Res^G_H(V) = V_1 \oplus \cdots \oplus V_r$, where the $V_i$ are the
homogeneous components of $\Res^G_H(V)$. Chose the notation so that~$S$ is
a submodule of~$V_1$. As~$G$ permutes the~$V_i$ transitively, 
$\dim_{\mathbb{R}}( V_1 )$ is odd and there is $g \in G$ such that 
$\rho(g) n V_1 = V_1$. By Lemma~\ref{DirectSums} and our assumption, $(H,V_1)$ 
has the $E1$-property. Since $\rho(H)$ is characteristic in $\rho(G)$, the claim 
follows from Lemma~\ref{InvariantSubgroup} with $(G,V,n)$ replaced by 
$(G,V,\rho(g)n)$.
\end{proof}

\subsection{The minimal counterexamples}
Here we prove that a minimal counterexample to Theorem~\ref{RLTheorem}
is a non-abelian simple group.

\begin{prp}
\label{GroupDirectProduct}
Let~$H$ be a non-abelian finite simple group, and assume that $G = H \times 
\cdots \times H$ is a direct product of $r$ copies of~$H$. For each $1 \leq i 
\leq r$, let $V_i$ be an irreducible $\mathbb{R}H$-module of odd dimension.
Consider the $\mathbb{R}G$-module $V := V_1 \otimes \cdots \otimes V_r$ with
the $i$-th factor of~$G$ acting on~$V_i$.

Suppose that $V$ is not the trivial module and that for all $1 \leq i \leq r$
either $V_i$ is the trivial $\mathbb{R}H$-module or that $(H,V_i)$ has the
$E1$-property. Then $(G,V)$ has the $E1$-property.
\end{prp}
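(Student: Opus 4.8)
The plan is to exploit the fact that, since each $V_i$ is absolutely irreducible (Lemma~\ref{AbsolutelyIrreducible}), the module $V=V_1\otimes\cdots\otimes V_r$ is an absolutely irreducible $\mathbb{R}G$-module with $\End(V)=\End(V_1)\otimes\cdots\otimes\End(V_r)$, each $\rho_i(H)$ spanning $\End(V_i)$ by Burnside's theorem. First I would reduce to the case that $\rho$ is faithful: the factors $H_i$ with $V_i$ trivial lie in $\Ker(\rho)$, and replacing $G$ by the product of the remaining factors and $V$ by the tensor product of the non-trivial $V_i$ alters neither $\rho(G)$ nor the truth of the $E1$-property, so I may assume every $V_i$ is non-trivial and $\rho(G)\cong H^r$. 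Fix a finite-order $n\in N_{\GL(V)}(\rho(G))$; then $A:=\langle\rho(G),n\rangle$ is finite and $\mathrm{ad}_n$ induces an automorphism of $\rho(G)\cong H^r$. Since $H$ is non-abelian simple, $\Aut(H^r)=\Aut(H)\wr S_r$, so $\mathrm{ad}_n$ permutes the factors $\rho(H_i)$ by some $\pi\in S_r$; crucially $\pi$ is unchanged if $n$ is replaced by $\rho(g)n$, so the permutation is a genuine invariant that must be confronted directly.

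The key structural observation is that $n$ is a \emph{twisted tensor shift}. Conjugation by $n$ is an algebra automorphism of $\End(V)=\bigotimes_i\End(V_i)$ permuting the factor subalgebras by $\pi$; by Skolem--Noether such an automorphism, and hence $n$ itself up to a scalar, has the shape
$$n(w_1\otimes\cdots\otimes w_r)=t_1(w_{\pi^{-1}(1)})\otimes\cdots\otimes t_r(w_{\pi^{-1}(r)})$$
for suitable linear isomorphisms $t_i\colon V_{\pi^{-1}(i)}\to V_i$. Multiplying by the diagonal operator $\rho(g)$ preserves this shape, so every element $x:=\rho(g)n$ of the coset $\rho(G)n$ is again a twisted tensor shift along $\pi$, with $i$-th twist $s_i=\rho_i(g_i)t_i$.

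Next I would construct an explicit fixed vector of a suitable $x$, working one $\pi$-orbit at a time. For an orbit $O$ with base point $j$, the composite of the twists around $O$, read off in the $j$-th slot, is the \emph{monodromy} $T_j\in\GL(V_j)$; a direct computation gives $x^{|O|}|_{\text{slot }j}=T_j$, and choosing all $g_i=1$ except $g_j$ one finds $T_j=\rho_j(g_j)m_j$, where $m_j$ is the corresponding monodromy of $n$, a finite-order element of $N_{\GL(V_j)}(\rho_j(H))$. Since $V_j$ is non-trivial, the hypothesis that $(H,V_j)$ has the $E1$-property lets me pick $g_j\in H$ so that $T_j=\rho_j(g_j)m_j$ has eigenvalue~$1$; let $0\neq v\in V_j$ be a fixed vector. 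Setting $v_j:=v$ and $v_{\pi(i)}:=s_i(v_i)$ around the orbit, the identity $T_j v=v$ closes the cycle consistently, so $\bigotimes_{i\in O}v_i$ is fixed by the action of $x$ on the factors in $O$. Performing such a choice independently on each orbit (the $g_j$ for distinct base points lie in distinct coordinates) produces $g\in G$ and a non-zero $\eta=v_1\otimes\cdots\otimes v_r$ with $\rho(g)n\,\eta=\eta$, whence $\rho(g)n$ has eigenvalue~$1$ and $(G,V)$ has the $E1$-property.

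The hard part will be the structural step: verifying rigorously that $n$ is, up to scalar, a twisted tensor shift and that the first-slot monodromy is exactly $\rho_j(g_j)m_j$ with $m_j$ finite-order and normalizing $\rho_j(H)$. What makes the argument succeed is that $x$ has \emph{finite} order, so I only need $T_j$ to have eigenvalue~$1$ rather than any finer spectral property — and this is precisely what the $E1$-property of the single factor $(H,V_j)$ supplies. Once the normal form is established, the case of a non-trivial $\pi$, which is the genuine obstacle because it cannot be removed by adjusting $n$ within its coset, is handled uniformly by the orbit-wise monodromy construction, the special case $\pi=\mathrm{id}$ recovering the plain restriction-to-a-single-factor argument via Lemmas~\ref{DirectSums} and~\ref{InvariantSubgroup}.
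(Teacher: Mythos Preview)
Your proposal is correct and follows essentially the same strategy as the paper's proof: establish that $n$ has the form of a ``twisted tensor shift'' along the permutation of the factors, compute the monodromy around each orbit, and apply the $E1$-property of $(H,V_j)$ to the monodromy to produce a fixed vector orbit by orbit.

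The organizational differences are worth noting. You reduce to the faithful case (all $V_i$ non-trivial) at the outset and then treat all $\pi$-orbits simultaneously; the paper instead keeps the possibly trivial factors, proves the single-cycle case directly, and then argues by induction on~$r$ via a tensor splitting $V=W_1\otimes W_2$ --- which forces a separate treatment when $W_2$ is trivial (using the eigenvalue~$-1$ trick). Your upfront reduction avoids this case split entirely. For the structural step you invoke Skolem--Noether on the factor subalgebras $\End(V_i)\subset\End(V)$, whereas the paper reaches the same conclusion by a direct representation-theoretic argument (comparing $\rho_i$ with $\rho_{\sigma(i)}\circ\alpha_{\sigma(i)}$ and using absolute irreducibility). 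Both routes yield the same normal form $n=f_\sigma^{-1}\circ(a_1\otimes\cdots\otimes a_r)$ up to scalar, and from there the constructions of the fixed vector are identical.
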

\begin{proof}
For $1 \leq i \leq r$, let $\rho_i \colon H \rightarrow \GL(V_i)$ denote the
representation of~$H$ afforded by~$V_i$. Then $\rho := \rho_1 \otimes \cdots 
\otimes \rho_r$ is the representation of $G$ afforded by~$V$. 

Let $n \in \GL(V)$ be of finite order normalizing~$\rho(G)$. For
$1 \leq i \leq r$, let $\nu_i \colon H \rightarrow G$ denote the embedding of~$H$
onto the $i$-th direct factor $H_i := \nu_i(H)$ of~$G$. As conjugation by~$n$
permutes the set $\{ \rho( H_i ) \mid 1 \leq i \leq r\}$ of normal subgroups
of $\rho(G)$, there is a permutation $\sigma$ of $\{ 1, \ldots , r \}$, and
$\alpha_i \in \Aut( H )$, $1 \leq i \leq r$, such that
\begin{multline}
\label{ConjugateTensors}
n \circ ( \rho_1( s_1 ) \otimes \cdots \otimes \rho_r( s_r ) ) \circ n^{-1}\\
= \rho_1( {{\alpha_1}( s_{\sigma^{-1}(1)} )} ) \otimes \cdots \otimes
\rho_r( {{\alpha_r}( s_{\sigma^{-1}(r)} )} )
\end{multline}
for all $s_1, \ldots , s_r \in H$.
Now consider the isomorphism of $\mathbb{R}$-vector spaces
$$f_\sigma \colon V_1 \otimes \cdots \otimes V_r \rightarrow V_{\sigma(1)} \otimes 
\cdots \otimes V_{\sigma(r)}, v_1 \otimes \cdots \otimes v_r \mapsto v_{\sigma(1)} 
\otimes \cdots \otimes v_{\sigma(r)}.$$
From Equation~(\ref{ConjugateTensors}) we obtain
\begin{multline}
\label{ConjugateTensors2}
f_\sigma \circ n \circ (\rho_1( s_1 ) \otimes \cdots \otimes \rho_r( s_r ) )
\circ n^{-1} \circ f_\sigma^{-1}\\
= (\rho_{\sigma(1)} \circ \alpha_{\sigma(1)})(s_1) \otimes \cdots \otimes
(\rho_{\sigma(r)} \circ \alpha_{\sigma(r)})(s_r)
\end{multline}
for all $s_1, \ldots , s_r \in H$.
Equation~(\ref{ConjugateTensors2}) shows that~$\rho$ is equivalent to the
representation
$$(\rho_{\sigma(1)} \circ \alpha_{\sigma(1)}) \otimes \cdots \otimes 
(\rho_{\sigma(r)} \circ \alpha_{\sigma(r)})\colon G \rightarrow \GL( V_{\sigma(1)} 
\otimes \cdots \otimes V_{\sigma(r)}).$$
As all the representations $\rho_i$ are absolutely irreducible by
Lemma~$2.5.1$, it follows that
$\rho_i$ is equivalent to $\rho_{\sigma(i)} \circ \alpha_{\sigma(i)}$ for all
$1 \leq i \leq r$. Thus there are $\mathbb{R}$-vector space isomorphisms
$a_i \colon V_i \rightarrow V_{\sigma(i)}$, such that
\begin{equation}
\label{ConjugateTensors3}
a_i \circ \rho_i( s ) \circ a_i^{-1} = (\rho_{\sigma(i)} \circ \alpha_{\sigma(i)})(s)
\end{equation}
for all $1 \leq i \leq r$ and all $s \in H$.
Equations~(\ref{ConjugateTensors2}) and~(\ref{ConjugateTensors3}) and the fact
that $\rho$ is absolutely irreducible now imply that $n = c f_\sigma^{-1} \circ 
(a_1 \otimes \cdots \otimes a_r)$ for some $0 \neq c \in \mathbb{R}$. Replacing
$a_1$ by $ca_1$ we may assume that $n = f_\sigma^{-1} \circ (a_1 \otimes \cdots 
\otimes a_r)$.

Suppose first that $\sigma$ is an $r$-cycle. Then all the $\rho(H_i)$ are 
isomorphic, so that, in particular,~$V_1$ is non-trivial. For $1 \leq i \leq r$
put $b_i := a_{\sigma^{i-1}(1)}$ and $\beta_i := \alpha_{\sigma^{i-1}(1)}$.
Using Equation~(\ref{ConjugateTensors3}), we find
$$
(b_r \circ \cdots \circ b_1) \circ \rho_1(s) \circ (b_r \circ \cdots \circ b_1)^{-1}
= (\rho_1 \circ \beta_1 \circ \beta_r \circ \cdots \circ \beta_2)(s)
$$
for all $s \in H$. Thus $\rho_1(H) \leq \GL(V_1)$ is invariant under
$b := b_r \circ \cdots \circ b_1 \in \GL(V_1)$. As $n = f_\sigma^{-1} \circ 
(a_1 \otimes \cdots \otimes a_r)$ has finite order, it follows that~$b$ has
finite order, since $(f_\sigma^{-1} \circ (a_1 \otimes \cdots \otimes a_r))^r = 
b \otimes c_2 \otimes \cdots \otimes c_r$ for suitable $c_i \in \Aut(V_i)$,
$2 \leq i \leq r$. As $(H,V_1)$ has the $E1$-property, there is
$0 \neq v_1 \in V_1$ and $s \in H$ such that $\rho_1(s)bv_1 = v_1$. For
$1 \leq i \leq r-1$, put $v_{\sigma^{i}(1)} := b_{i} v_{\sigma^{i-1}(1)}$. Then
\begin{eqnarray*}
\rho(s,1, \ldots , 1)n( v_1 \otimes \cdots \otimes v_r ) & = & 
\rho(s,1, \ldots , 1)(f_\sigma^{-1}(a_1 v_1 \otimes \cdots \otimes a_r v_r)) \\
& = & \rho_1(s)a_{{\sigma}^{-1}(1)}v_{{\sigma}^{-1}(1)} \otimes 
a_{{\sigma}^{-1}(2)}v_{{\sigma}^{-1}(2)} \otimes \\
& & \cdots \otimes a_{{\sigma}^{-1}(r)}v_{{\sigma}^{-1}(r)} \\
& = & v_1 \otimes \cdots \otimes v_r,
\end{eqnarray*} 
as 
$\rho_1(s)a_{{\sigma}^{-1}(1)}v_{{\sigma}^{-1}(1)} = 
\rho_1(s)b_rv_{{\sigma}^{r-1}(1)} = 
\rho_1(s)b_rb_{r-1}v_{{\sigma}^{r-2}(1)} = \cdots = \rho_1(s)bv_1 = v_1$, and
$a_{{\sigma}^{-1}(i)}v_{{\sigma}^{-1}(i)} = v_i$ for $2 \leq i \leq r$.

If $\sigma$ is not an $r$-cycle, we have tensor decompositions $V = W_1 \otimes 
W_2$ with $W_1 = V_1 \otimes \cdots \otimes V_{r'}$ and $W_2 = V_{r'+1} \otimes 
\cdots \otimes V_r$ for some $1 < r' < r$, and corresponding decompositions
$G = G_1 \times G_2$, $\rho = \mu_1 \otimes \mu_2$, $n = n_1 \otimes n_2$, where
each~$n_i$ has finite order and normalizes $\mu_i( G_i )$, for $i = 1, 2$.
Without loss of generality we can assume that~$W_1$ is non-trivial. If~$W_2$ is 
non-trivial or if $n_2 = \id_{W_2}$, arguing by induction on~$r$ we find 
elements $g_i \in G_i$ and non-zero vectors $w_i \in W_i$ such that 
$\mu_i( g_i )n_iw_i = w_i$ for $i = 1, 2$. If $W_2$ is the trivial module and
$n_2 = -\id_{W_2}$, then take $g_1 \in G_1$ and $0 \neq w_1 \in W_1$ such that
$\mu_1(g_1)(-n_1)w_1 = w_1$, and let $0 \neq w_2 \in W_2$. In both cases,
$0 \neq w_1 \otimes w_2$ is a fixed vector of $\rho( g_1, g_2 )n = 
\mu_1(g_1)n_1 \otimes \mu_2(g_2)n_2 = (-\mu_1(g_1)n_1) \otimes (-\mu_2(g_2)n_2)$.
\end{proof}

\begin{cor}
\label{E1ForDirectProducts}
Let~$H$ be a finite non-abelian simple group with the $E1$-property.
Then every finite direct product $G = H \times \cdots \times H$ has the 
$E1$-property.
\end{cor}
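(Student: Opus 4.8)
The plan is to reduce immediately to Proposition~\ref{GroupDirectProduct}. By Definition~\ref{RLDefinition} it suffices to fix an arbitrary irreducible, non-trivial $\mathbb{R}G$-module $V$ of odd dimension and to show that it decomposes as an external tensor product $V \cong V_1 \otimes \cdots \otimes V_r$, where $V_i$ is an irreducible $\mathbb{R}H$-module of odd dimension on which the $i$-th direct factor of $G$ acts. Once this decomposition is in hand, each $V_i$ is either the trivial module or, since $H$ has the $E1$-property, satisfies that $(H,V_i)$ has the $E1$-property; as $V$ is non-trivial, Proposition~\ref{GroupDirectProduct} then yields the claim.

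To obtain the decomposition, I would first note that $V$ is absolutely irreducible by Lemma~\ref{AbsolutelyIrreducible}, so its complexification $W := \mathbb{C} \otimes_{\mathbb{R}} V$ is an irreducible $\mathbb{C}G$-module. The standard theory of irreducible modules over a direct product over an algebraically closed field gives a factorization $W \cong W_1 \otimes \cdots \otimes W_r$ into irreducible $\mathbb{C}H$-modules $W_i$, one for each direct factor $H_i$ of $G$; at the level of characters this reads $\chi_W = \chi_{W_1} \cdots \chi_{W_r}$ with each $\chi_{W_i} \in \Irr(H)$, and this factorization is unique since $\chi_{W_i}$ is recovered by restricting $\chi_W$ to the $i$-th factor. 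As $\chi_W(1) = \dim V$ is odd and equals $\prod_i \chi_{W_i}(1)$, every $\chi_{W_i}$ has odd degree.

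The crucial, and only nonroutine, step is the descent back to $\mathbb{R}$. Because $V$ is realizable over the reals, $\chi_W$ is real-valued, so $\overline{\chi_{W_1}} \cdots \overline{\chi_{W_r}} = \chi_{W_1} \cdots \chi_{W_r}$; by the uniqueness of the factorization this forces $\overline{\chi_{W_i}} = \chi_{W_i}$, i.e.\ each $\chi_{W_i}$ is real. Each $\chi_{W_i}$ is thus a real irreducible character of~$H$ of odd degree, whence $\nu_2(\chi_{W_i}) = 1$ by Lemma~\ref{AbsolutelyIrreducible}, and so $\chi_{W_i}$ is afforded by an irreducible $\mathbb{R}H$-module $V_i$, necessarily of odd dimension. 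Then $V_1 \otimes \cdots \otimes V_r$ and $V$ are absolutely irreducible $\mathbb{R}G$-modules whose complexifications both have character $\chi_W$, whence $V \cong V_1 \otimes \cdots \otimes V_r$. This completes the reduction to Proposition~\ref{GroupDirectProduct}. I expect the main obstacle to be precisely this real-descent argument, matching real factors to the real module $V$; the tensor factorization over~$\mathbb{C}$ and the final invocation of the proposition are routine.
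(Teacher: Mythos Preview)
Your proposal is correct and follows essentially the same route as the paper: complexify the absolutely irreducible $\mathbb{R}G$-module, factor it as a tensor product of irreducible $\mathbb{C}H$-modules, use uniqueness of this factorization to see that each tensor factor has a real character of odd degree, descend each factor to~$\mathbb{R}$ via Lemma~\ref{AbsolutelyIrreducible}, and then invoke Proposition~\ref{GroupDirectProduct}. The paper's proof is the same argument stated a bit more tersely.
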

\begin{proof}
Suppose that $G$ is a direct product of~$r$ copies of~$H$.
Let~$V$ be a non-trivial irreducible $\mathbb{R}G$-module of odd dimension.
Then~$V$ is absolutely irreducible by Lemma~\ref{AbsolutelyIrreducible}.
Hence there are irreducible $\mathbb{C}H$-modules $V_i'$, $1 \leq i \leq r$
such that $\mathbb{C} \otimes V \cong V_1' \otimes \cdots \otimes V_r'$. For
$1 \leq i \leq r$, the isomorphism type of~$V_i'$ is uniquely determined by
the isomorphism type of~$V$, and thus the characters of the $V_i'$ are real 
valued. As $\dim( V_i' )$ is odd for $1 \leq i \leq r$, 
Lemma~\ref{AbsolutelyIrreducible} implies the existence of 
$\mathbb{R}H$-modules $V_i$ such that $\mathbb{C} \otimes V_i \cong V_i'$. 
Hence $V \cong V_1 \otimes \cdots \otimes V_r$ as $\mathbb{R}G$-modules. 
As~$V$ is non-trivial, at least one of the $V_i$ is non-trivial and thus 
$(H,V_i)$ has the $E1$-property by our assumption on~$H$. It follows from 
Proposition~\ref{GroupDirectProduct} that~$V$ has the $E1$-property.
\end{proof}

\begin{cor}
\label{ReductionCor}
A minimal counterexample to {\rm Theorem~\ref{RLTheorem}} is a 
non-abelian simple group.
\end{cor}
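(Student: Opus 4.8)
The plan is to argue by contradiction using minimality, so that every group of order smaller than $|G|$ already has the $E1$-property. Let $G$ be a minimal counterexample. Then there is a non-trivial irreducible $\mathbb{R}G$-module $V$ of odd dimension, affording the representation~$\rho$, together with an element $n \in N_{\GL(V)}(\rho(G))$ of finite order, such that no $\rho(g)n$, $g \in G$, has eigenvalue~$1$. First I would reduce to the case that $\rho$ is faithful: if $K := \Ker(\rho) \neq \{1\}$, then $V$ is a non-trivial irreducible $\mathbb{R}(G/K)$-module of odd dimension on which $G/K$ acts with the same image $\rho(G)$, and $n$ still normalizes this image; since $|G/K| < |G|$, the group $G/K$ has the $E1$-property, and lifting a suitable $\bar g \in G/K$ realizing eigenvalue~$1$ back to $G$ contradicts the choice of $(V,n)$. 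Hence $\rho$ is faithful and maps $G$ isomorphically onto $\rho(G)$.

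The heart of the argument is a reduction along characteristic subgroups. I would choose $H$ to be a non-trivial characteristic subgroup of $G$ of minimal order. Because a characteristic subgroup of a characteristic subgroup is again characteristic, $H$ is characteristically simple, hence a direct product $H = T \times \cdots \times T$ of copies of a simple group~$T$. If $H$ is a proper subgroup of $G$, then $|H| < |G|$, so $H$ has the $E1$-property. Since $\rho$ is faithful, $\rho(H)$ is a non-trivial characteristic subgroup of $\rho(G)$, and Corollary~\ref{NormalSubgroups} then forces $(G,V)$ to have the $E1$-property, contradicting the choice of~$V$. Consequently $H = G$, and $G$ itself is characteristically simple, say $G = T \times \cdots \times T$ with $m$ factors.

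It remains to pin down $T$ and $m$. If $T$ is abelian, then $G$ is an elementary abelian $p$-group, which has the $E1$-property by Examples~\ref{TwoExamples}(b), a contradiction; so $T$ is a non-abelian simple group. The decisive case, and the one I expect to be the main obstacle, is $m \geq 2$: here $G$ has no proper non-trivial characteristic subgroup to restrict to, so the characteristic-subgroup reduction is powerless and a genuinely new input is required. This is exactly the situation handled by Corollary~\ref{E1ForDirectProducts}: since $|T| < |G|$, the simple group $T$ has the $E1$-property, whence the direct product $G$ does as well, again a contradiction. Therefore $m = 1$ and $G = T$ is a non-abelian simple group, as claimed.
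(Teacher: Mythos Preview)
Your proof is correct and follows essentially the same route as the paper's: reduce to faithful~$\rho$, use Corollary~\ref{NormalSubgroups} on a proper non-trivial characteristic subgroup to force~$G$ to be characteristically simple, invoke Corollary~\ref{E1ForDirectProducts} (together with Example~\ref{TwoExamples}(b) for the abelian case) to conclude that~$G$ is non-abelian simple. The only cosmetic differences are that you justify faithfulness explicitly and choose a \emph{minimal} characteristic subgroup, whereas the paper simply takes any proper one; neither difference is material.
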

\begin{proof}
Let~$G$ be a group of minimal order without the $E1$-property.
Let~$V$ be a non-trivial irreducible $\mathbb{R}G$-module of odd dimension
such that $(G,V)$ does not have the $E1$-property, and let~$\rho$ denote the 
representation of~$G$ afforded by~$V$. Then~$\rho$ is faithful.

Let $H \unlhd G$ denote a non-trivial characteristic subgroup of~$G$. If 
$H \lneq G$, then~$H$ has the $E1$-property by assumption. 
Moreover, $\rho(H)$ is non-trivial and characteristic in $\rho(G)$,
as $\rho$ is faithful. But then $(G,V)$ has the $E1$-property 
by Corollary~\ref{NormalSubgroups}, contradicting our assumption.

Thus~$G$ is characteristically simple. In this case, 
Corollary~\ref{E1ForDirectProducts} implies that~$G$ is simple.
Lemma~\ref{OddDegreeClifford} and Example~\ref{TwoExamples}(b) imply 
that~$G$ is non-abelian.
\end{proof}

\begin{cor}
\label{SolvableGroups}
A solvable group has the $E1$-property.\hfill{\textsquare}
\end{cor}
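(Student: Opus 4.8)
The plan is to argue by contradiction and strong induction on the group order, re-running the reduction behind Corollary~\ref{ReductionCor} but staying entirely inside the class of solvable groups. I would suppose the statement fails and let $G$ be a solvable group of minimal order without the $E1$-property. The key preliminary remark is that, by minimality, every solvable group of order smaller than $|G|$ has the $E1$-property; since subgroups and quotients of solvable groups are again solvable, this inductive hypothesis will be available for all the auxiliary groups arising below.

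First I would reproduce the opening reduction from the proof of Corollary~\ref{ReductionCor}. Choose a non-trivial irreducible $\mathbb{R}G$-module~$V$ of odd dimension such that $(G,V)$ lacks the $E1$-property, and let~$\rho$ be the afforded representation. As the $E1$-property depends only on the image~$\rho(G)$, if $\Ker(\rho) \neq \{1\}$ then $(G/\Ker(\rho), V)$ also lacks it; but $G/\Ker(\rho)$ is a solvable group of smaller order, so this contradicts the inductive hypothesis. Hence~$\rho$ is faithful. Now let $H$ be any non-trivial proper characteristic subgroup of~$G$. Then $\rho(H) \neq \{1\}$ is characteristic in $\rho(G)$, and~$H$, being solvable of order $<|G|$, has the $E1$-property by induction. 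Corollary~\ref{NormalSubgroups} then forces $(G,V)$ to have the $E1$-property, a contradiction. Therefore~$G$ admits no non-trivial proper characteristic subgroup; that is,~$G$ is characteristically simple.

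It is here that the solvable case diverges from the general reduction of Corollary~\ref{ReductionCor}. A finite characteristically simple group is a direct product of copies of a single simple group, and if it is solvable that simple group must be cyclic of prime order; hence~$G$ is elementary abelian. But an elementary abelian $p$-group has the $E1$-property by Example~\ref{TwoExamples}(b), contradicting the choice of~$G$. This contradiction completes the proof.

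I do not expect any genuine obstacle in this argument; it is a short deduction from the results already established. The only point requiring care is that one cannot simply cite Corollary~\ref{ReductionCor} verbatim: that corollary identifies a \emph{globally} minimal counterexample as non-abelian simple, which a priori leaves open the existence of larger solvable counterexamples. The remedy is precisely to carry the induction out within the solvable class, where solvability is inherited by subgroups and quotients so that the inductive hypothesis applies, and where the characteristically-simple conclusion collapses to the elementary abelian case handled by Example~\ref{TwoExamples}(b).
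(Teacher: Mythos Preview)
Your argument is correct and is precisely the intended completion: the paper states the corollary without proof, and the natural way to extract it from the surrounding material is exactly what you do---rerun the reduction of Corollary~\ref{ReductionCor} inside the class of solvable groups, where characteristically simple forces elementary abelian and Example~\ref{TwoExamples}(b) finishes. Your remark that one cannot cite Corollary~\ref{ReductionCor} verbatim is well taken, and your fix via induction within the solvable class is the right one.
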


\section{The $E1$-property for the simple sporadic and alternating groups}
\label{SporadicAndAlternating}

The aim of this section is to prove the $E1$-property for the simple sporadic
groups and the simple alternating groups. On the way to this, we establish 
further reductions.

\subsection{General notation}
We fix some pieces of notation that will be used throughout the remainder of
this article. 

\begin{notn}
\label{HypoSimple}
{\rm Let~$G$ be a non-abelian finite simple group. Let~$V$ denote a non-trivial 
irreducible $\mathbb{R}G$-module of odd dimension, and let~$\rho$ be the 
representation of~$G$ afforded by~$V$. Then~$\rho$ is faithful as~$G$ is simple. 
Moreover, we let $n \in \GL(V)$ denote an element of finite order 
normalizing~$\rho(G)$. Finally,~$\nu$ denotes the automorphism of~$G$ induced 
by~$n$, i.e.\ $\nu(g) = \rho^{-1}( {^n\!\rho(g)} )$ for $g \in G$.
}\hfill{\textsquare}
\end{notn}

Notice that if~$\chi$ denotes the character of~$G$ afforded by~$V$, then~$\chi$
is $\nu$-invariant.

\subsection{On the structure of the problem}
\label{PreliminaryConsiderations}

Assume Notation~\ref{HypoSimple}. In this subsection we will identify~$G$ with
its image $\rho(G) \leq \GL( V )$. Thus $G \leq \GL(V)$ is a non-abelian simple
group which acts absolutely irreducibly on~$V$. In particular, $C_{\GL(V)}( G ) 
= \{ x \cdot \id_V \mid 0 \neq x \in \mathbb{R} \}$. Also, $\nu = \ad_n$, and if
$n' \in \GL(V)$ is of finite order normalizing~$G$ and inducing the 
automorphism~$\nu$ of~$G$, then $n' = \pm n$, since 
$n^{-1}n' \in C_{\GL(V)}( G )$. As~$G$ is perfect, we have $G \leq \SL(V)$. 
Notice that $N_{\SL(V)}( G )$ embeds into~$\Aut(G)$, and thus every element of 
$N_{\SL(V)}( G )$ has finite order. As every element of finite order of~$\GL(V)$ 
has determinant~$1$ or~$-1$, it follows that the set of elements of finite order 
normalizing~$G$ is equal to the finite subgroup 
$N_{\SL(V)}(G) \times \langle -\id_V \rangle$ of $\GL(V)$.

\begin{defn}
\label{DefineA}
{\rm
Under the identification of~$G$ and~$\rho(G)$, set $A := \langle G, n \rangle$ 
and $A_1 := A \cap \SL(V)$. Thus $A_1 \leq A \leq \GL(V)$.
}\hfill{\textsquare}
\end{defn}

As~$A$ is a finite group, we have $C_A( G ) \leq \langle -\id_V \rangle$. 
We now distinguish two cases.

\medskip
\noindent
\textbf{Case 1.} Suppose that $- \id_V \not\in A$, so that, in particular,
$- \id_V \not\in \langle n \rangle$. Then $C_A( G ) = \{ \id_V \}$ and~$A$ 
embeds into~$\Aut(G)$. We get a chain of groups
$$G \leq A_1 \leq A \leq N_{\SL(V)}(G) \times \langle -\id_V \rangle.$$
This case occurs, e.g., for $G = A_5$ and $A \cong S_5$ when $\dim(V) = 5$. 

\medskip
\noindent
\textbf{Case 2.} Suppose that $- \id_V \in A$, so that 
$C_A( G ) = \langle -\id_V \rangle$. Since $-\id_V \not\in A_1$, we obtain
$$A = A_1 \times \langle -\id_V \rangle \leq 
N_{\SL(V)}(G) \times \langle -\id_V \rangle.$$
Thus $n = -n_1$ for some $n_1 \in A_1$. This case occurs, e.g., for 
$G = \SL_2( 8 )$ and $\dim(V) = 7$, where there exist an element 
$n \in \GL( V)$ of order~$6$ normalizing~$G$ such that 
$\langle G, n \rangle = N_{\SL(V)}( G ) \times \langle - \id_V \rangle$.

We record a simple consequence.

\begin{lem}
\label{CaseDistinction}
Assume Notation~{\rm \ref{HypoSimple}} and suppose we are in {\rm Case~$2$}. 
Then $A_1 = \langle G, n_1 \rangle$ and $|A_1/G|$ is odd. In particular, 
there is $g \in G$ such that $|gn_1|$ is odd.
\end{lem}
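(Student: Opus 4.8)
The plan is to prove the three assertions in turn, exploiting the decomposition $A = A_1 \times \langle -\id_V\rangle$ recorded in Case~$2$ together with the relation $n = -n_1 = (-\id_V)n_1$, where $n_1 \in A_1$.

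First I would verify that $A_1 = \langle G, n_1\rangle$. Since $G$ is perfect we have $G \leq \SL(V)$, and $n_1 \in A_1 \leq \SL(V)$ by the definition $A_1 = A \cap \SL(V)$, so the inclusion $\langle G, n_1\rangle \leq A_1$ is immediate. For the reverse inclusion I would use that $-\id_V$ is central in~$A$ and that $n = (-\id_V)n_1$, whence every element of $A = \langle G, n\rangle$ lies in $\langle -\id_V\rangle\langle G, n_1\rangle$; thus $A = \langle -\id_V\rangle\langle G, n_1\rangle$. Because $\langle G, n_1\rangle \leq A_1$ while $-\id_V \notin A_1$, comparing orders via $|A| = 2|A_1|$ forces $|\langle G, n_1\rangle| = |A_1|$, hence equality.

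The linchpin of the second assertion is that $A/G$ is cyclic: we have $G \unlhd A$ since $n$ normalizes~$G$, and $A = \langle G, n\rangle$, so $A/G$ is generated by the image of~$n$. The product decomposition then descends to an internal direct product $A/G = (A_1/G) \times (\langle -\id_V\rangle G/G)$, the intersection being trivial because $-\id_V \notin A_1 \supseteq G$, and the second factor having order~$2$. Since a cyclic group cannot be a direct product $C \times \mathbb{Z}/2\mathbb{Z}$ with $|C|$ even, I conclude that $|A_1/G|$ is odd.

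For the final clause I would decompose $n_1$ inside the cyclic group $\langle n_1\rangle$ as $n_1 = n_1^{(2)} n_1^{(2')}$, the product of its commuting $2$-part and $2'$-part. Since $n_1^{(2)}$ has $2$-power order and $A_1/G$ has odd order, the image of $n_1^{(2)}$ in $A_1/G$ is trivial, i.e.\ $n_1^{(2)} \in G$. Setting $g := (n_1^{(2)})^{-1} \in G$ then yields $g n_1 = n_1^{(2')}$, which has odd order. I do not expect a serious obstacle: the whole argument is bookkeeping inside the finite group~$A$, the only point requiring genuine care being the reduction from cyclicity of $A/G$ to oddness of $A_1/G$, which is precisely where the structure specific to Case~$2$ is used.
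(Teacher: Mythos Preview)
Your proof is correct and uses the same ingredients as the paper's: the decomposition $A = A_1 \times \langle -\id_V\rangle$, the relation $n = -n_1$, and the cyclicity of $A/G$. The paper packages the first two assertions into a single chain $A_1 = \langle G, n^2\rangle = \langle G, n_1^2\rangle \leq \langle G, n_1\rangle \leq A_1$ (using $n^2 = n_1^2$), which simultaneously gives $A_1 = \langle G, n_1\rangle$ and shows that the cyclic group $A_1/G$ is generated by the square $n_1^2G$ of a generator, hence has odd order; your separate treatment via the direct-product decomposition of $A/G$ is equally valid and arguably more transparent.
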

\begin{proof}
We have $A_1 = \langle G, n^2 \rangle = \langle G, n_1^2 \rangle 
\leq \langle G, n_1 \rangle \leq A_1$, and thus $\langle n_1G \rangle = A_1/G = 
\langle n_1^2G \rangle$, which implies that $|A_1/G|$ is odd. The last 
statement is clear, as the $2$-part of~$n_1$ lies in~$G$.
\end{proof}

It is also worthwhile to take a more abstract point of view.

\begin{defn}
\label{APrimeAndGPrime}
{\rm 
Set $A' := A$, respectively $A' := A_1$ if $(G,V,n)$ is as in Case~$1$, 
respectively Case~$2$.

Set $G' := \langle \Inn(G), \nu \rangle \leq \Aut(G)$.
}\hfill{\textsquare}
\end{defn}

\begin{lem}
\label{GPrimeAndA}
There is a surjective homomorphism 
$$\rho'\colon A \rightarrow G'$$
with
\begin{equation}
\label{GPrimeAndAFormula}
gn^i \mapsto \ad_g \circ \nu^i \text{\ for\ } g \in G \text{\ and\ } 
i \in \mathbb{Z}. 
\end{equation}
Moreover,~$\rho'$ restricts to an isomorphism $A' \rightarrow G'$.
\end{lem}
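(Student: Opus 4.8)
The plan is to obtain $\rho'$ directly from the conjugation action of~$A$ on its normal subgroup $G = \rho(G)$. Since~$A$ normalizes~$G$, the assignment sending $a \in A$ to the automorphism $g \mapsto {}^{a}g$ of~$G$ is a well-defined group homomorphism $\rho'\colon A \rightarrow \Aut(G)$; no choices are involved, so well-definedness is automatic. Evaluating on generators gives $\rho'(g) = \ad_g$ for $g \in G$ and $\rho'(n) = \nu$, the latter because $\nu = \ad_n$ under our identification. Since $A = \langle G, n \rangle$, the image of~$\rho'$ is therefore $\langle \Inn(G), \nu \rangle = G'$, so $\rho'\colon A \rightarrow G'$ is surjective. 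Because~$n$ normalizes~$G$, every element of~$A$ has the form $gn^i$ with $g \in G$ and $i \in \mathbb{Z}$, and the computation ${}^{gn^i}h = g\,\nu^i(h)\,g^{-1}$ for $h \in G$ shows $\rho'(gn^i) = \ad_g \circ \nu^i$, which is exactly formula~(\ref{GPrimeAndAFormula}).

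For the isomorphism statement I would analyze the kernel, which is $C_A(G)$. As recalled before Definition~\ref{APrimeAndGPrime}, one has $C_A(G) \leq \langle -\id_V \rangle$. In Case~$1$ this forces $\Ker(\rho') = \{ \id_V \}$, and since $A' = A$ here, the surjection~$\rho'$ is already an isomorphism $A' \rightarrow G'$. In Case~$2$ one has $\Ker(\rho') = C_A(G) = \langle -\id_V \rangle$ of order~$2$, and $A' = A_1$. Here I would use two facts: first, $-\id_V \notin \SL(V)$ because $\dim(V)$ is odd, so $A_1 \cap \langle -\id_V \rangle = \{ \id_V \}$ and $\rho'|_{A_1}$ is injective; second, $A = A_1 \times \langle -\id_V \rangle$ with $-\id_V \in \Ker(\rho')$, whence $\rho'(A_1) = \rho'(A) = G'$, so $\rho'|_{A_1}$ is also surjective. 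Thus~$\rho'$ restricts to an isomorphism $A' \rightarrow G'$ in both cases.

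The argument is essentially bookkeeping, and I do not expect a genuine obstacle. The only points requiring a little care are verifying that conjugation really sends~$n$ to~$\nu$, so that the image is exactly~$G'$ rather than some larger subgroup of $\Aut(G)$, and, in Case~$2$, the surjectivity of the restriction $\rho'|_{A_1}$, which hinges on the scalar $-\id_V$ lying in the kernel. Both rely only on the structural facts about $C_{\GL(V)}(G)$, $A$, and $A_1$ established earlier in this subsection.
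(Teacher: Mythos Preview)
Your proof is correct and follows essentially the same line as the paper's: both identify $\rho'$ with the conjugation action of $A$ on its normal subgroup $G$, compute the kernel as $C_A(G)$, and then treat Cases~1 and~2 separately. Your construction is marginally more direct---you start from the conjugation homomorphism, which is automatically well-defined, and then read off formula~(\ref{GPrimeAndAFormula}), whereas the paper defines the map via the formula on a transversal $\{gn^i : 0 \le i < l\}$ and checks it is a homomorphism; you also spell out the isomorphism $A' \to G'$ in each case more fully than the paper, which simply asserts it.
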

\begin{proof}
Let~$l$ denote the smallest positive integer such that $n^l \in G$. Then 
$\nu^l \in \Inn(G)$. Every element of~$A$ has a unique expression as 
$g n^i$ for some $g \in G$ and some integer~$i$ with $0 \leq i < l$. 
We can thus define a surjective map $\rho'\colon A \rightarrow G'$ 
by~(\ref{GPrimeAndAFormula}). Clearly,~$\rho'$ is a homomorphism with
kernel~$C_A( G )$. This proves our assertions.
\end{proof}

\begin{rem}
\label{VAsGPrimeModule}
{\rm
Let $\chi \in \Irr(G)$ and $\chi' \in \Irr(A)$ denote the irreducible characters
of~$G$, respectively~$A$, afforded by~$V$. We also write~$\chi'$ for the
restriction of~$\chi'$ to~$A'$. Thus $\chi' \in \Irr( A' )$ is an extension 
of~$\chi$. 

The isomorphism $(\rho'|_{A'})^{-1} \colon G' \rightarrow A'$ from 
Lemma~\ref{GPrimeAndA} makes~$V$ into an $\mathbb{R}G'$-module, and, by a slight 
abuse of notation, we also let~$\chi'$ denote the character of~$G'$ afforded 
by~$V$. Thus $\chi'( \rho'( a' ) ) = \chi'( a' )$ for all $a' \in A'$.
}\hfill{\textsquare}
\end{rem}

\subsection{The large degree method}
The following criterion is often helpful in small situations. Notice that
$G' = \langle \Inn(G), \ad_g \circ \nu \rangle$ for every $g \in G$.

\begin{lem}
\label{MainCriterion}
Suppose that there is $g \in G$ such that $\alpha := \ad_g \circ \nu$ has even 
order, and that, with the above notation, 
$$\Res^{G'}_{\langle \alpha \rangle}( \chi' )$$
contains each of the real, irreducible characters of $\langle \alpha \rangle$ 
with positive multiplicity. Then $(G,V,n)$ has the $E1$-property.
\end{lem}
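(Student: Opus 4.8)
The plan is to reinterpret the eigenvalue condition for $\rho(g)n$ in terms of the action of $\alpha = \ad_g \circ \nu$ on $V$ through the $\mathbb{R}G'$-module structure introduced in Remark~\ref{VAsGPrimeModule}. Recall that $V$ becomes an $\mathbb{R}G'$-module via the isomorphism $(\rho'|_{A'})^{-1}\colon G' \rightarrow A'$ of Lemma~\ref{GPrimeAndA}; concretely, an element $\beta \in G'$ acts on $V$ exactly as $(\rho'|_{A'})^{-1}(\beta) \in A' \leq \GL(V)$ does. Hence the eigenvalues of $\alpha$ on $V$, read off from $\Res^{G'}_{\langle\alpha\rangle}(\chi')$, coincide with those of the element $m := (\rho'|_{A'})^{-1}(\alpha) \in A'$. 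The first step is therefore to locate $m$ inside $A$ and relate it to $\rho(g)n$.

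For this I would invoke the case distinction of Section~\ref{PreliminaryConsiderations}. Since $\rho'(gn) = \ad_g\circ\nu = \alpha$, in Case~1 (where $A' = A$) one gets $m = \rho(g)n$ directly. In Case~2 (where $A' = A_1$ and $n = -n_1$ with $n_1 \in A_1$) one checks $\rho'(gn_1) = \ad_g\circ\nu = \alpha$, using that $-\id_V \in \ker\rho'$, so $m = \rho(g)n_1$ and therefore $\rho(g)n = -m$. In both cases $\rho(g)n = \pm m$, with the sign depending only on which case we are in.

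The second step is to extract the needed eigenvalues of $m$ from the hypothesis. As $\alpha$ has even order, the cyclic group $\langle\alpha\rangle$ has exactly two real-valued irreducible characters: the trivial character and the character $\sigma$ with $\sigma(\alpha) = -1$. By hypothesis both occur in $\Res^{G'}_{\langle\alpha\rangle}(\chi')$ with positive multiplicity, which means that $\alpha$ — and hence $m$, acting on the real module $V$ — has both $1$ and $-1$ as eigenvalues (a real eigenvalue $\pm 1$ is already visible over $\mathbb{R}$, since the multiplicity of the relevant linear character equals the dimension of the corresponding real eigenspace). Consequently $\pm m$ has eigenvalue~$1$ irrespective of the sign: in Case~1 use that $m = \rho(g)n$ fixes a nonzero vector, while in Case~2 use that $m = \rho(g)n_1$ has a $(-1)$-eigenvector $v$, so that $\rho(g)n\,v = -mv = v$. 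Either way $(G,V,n)$ has the $E1$-property.

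The only real care needed is the bookkeeping between the $\mathbb{R}A$-module and $\mathbb{R}G'$-module descriptions of $V$ and, in Case~2, the sign coming from $n = -n_1$; this is where I expect the argument could most easily go astray, although it is not a deep difficulty. It is worth noting that the full strength of the hypothesis — all real irreducible characters of $\langle\alpha\rangle$, not merely the trivial character and $\sigma$ — is never used in this proof. It is phrased this way so that Lemma~\ref{AllConstituents}, which produces \emph{every} irreducible constituent once $\chi'(1)$ is large enough, can be fed directly into this criterion in the large degree method.
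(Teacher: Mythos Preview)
Your proof is correct and follows essentially the same approach as the paper's: both split into the two cases of Subsection~\ref{PreliminaryConsiderations}, identify the preimage of $\alpha$ in $A'$ as $gn$ (Case~1) or $gn_1$ (Case~2) via $\rho'$, and then use the occurrence of the trivial, respectively the sign, character of $\langle\alpha\rangle$ in $\chi'$ to produce the required eigenvalue~$1$ of $\rho(g)n$. Your closing observation that only these two real characters of $\langle\alpha\rangle$ are actually used, and that the stronger hypothesis is tailored to feed Lemma~\ref{AllConstituents} into the criterion, is correct and matches the paper's later use in Lemma~\ref{LargeDegrees}.
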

\begin{proof}
Let $\rho'\colon A \rightarrow G'$ be the homomorphism from Lemma~\ref{GPrimeAndA}.
Then $\rho'( gn ) = \alpha$. In Case~$1$, our hypothesis shows that 
$\Res^{A'}_{\langle gn \rangle}( \chi' )$ contains the 
trivial character of~$\langle gn \rangle$, and thus~$gn$ has eigenvalue~$1$.

Suppose that we are in Case~$2$ and put $n_1 = -n \in A'$. Since 
$n^{-1}n_1 = -\id_V$, we have $\rho'( n_1 ) = \nu$ and thus 
$\rho'( gn_1 ) = \alpha$. By hypothesis, 
$\Res^{A'}_{\langle gn_1 \rangle}( \chi' )$ contains the non-trivial real
irreducible character of~$\langle gn_1 \rangle$, and thus~$gn_1$ has 
eigenvalue~$-1$. Hence $gn = -gn_1$ has eigenvalue~$1$.
\end{proof}

\begin{cor}
\label{MainCriterionCor}
Suppose that there is $g \in G$ such that $\alpha := \ad_g \circ \nu$ is an
involution. Then $(G,V,n)$ has the $E1$-property.
\end{cor}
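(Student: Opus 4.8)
The plan is to derive this immediately from Lemma~\ref{MainCriterion}. Since $\alpha = \ad_g\circ\nu$ is an involution, the cyclic group $\langle\alpha\rangle$ has order~$2$, so $\alpha$ has even order and the two irreducible characters of $\langle\alpha\rangle$---the trivial character $1$ and the sign character $\epsilon$---are both real-valued. Thus the hypotheses of Lemma~\ref{MainCriterion} will be met once I verify the single remaining condition, namely that $\Res^{G'}_{\langle\alpha\rangle}(\chi')$ contains both $1$ and $\epsilon$ with strictly positive multiplicity.

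To check this, I would transport $\alpha$ back into $\GL(V)$ using the isomorphism $\rho'|_{A'}\colon A'\to G'$ of Lemma~\ref{GPrimeAndA}. Let $a\in A'$ be the element with $\rho'(a)=\alpha$; then $a$ is an involution of $\GL(V)$, and $\chi'(\alpha)$ equals the trace of $a$ acting on~$V$. Being an involution, $a$ is diagonalizable over $\mathbb{R}$ with eigenvalues in $\{+1,-1\}$; write $p$ and $m$ for the multiplicities of $+1$ and $-1$. Then $p+m=\dim(V)$ and $\mathrm{tr}(a)=p-m$, while the multiplicities of $1$ and $\epsilon$ in $\Res^{G'}_{\langle\alpha\rangle}(\chi')$ are exactly $p$ and $m$. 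So the task reduces to showing $p\geq 1$ and $m\geq 1$, i.e.\ that $a$ is neither $\id_V$ nor $-\id_V$.

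Both inequalities are forced by the setup. Since $\rho'|_{A'}$ is an isomorphism and $\alpha$ has order exactly~$2$, the element $a$ has order exactly~$2$, so $a\neq\id_V$ and hence $m\geq 1$. For the other inequality I would invoke the fact, established in Subsection~\ref{PreliminaryConsiderations}, that $-\id_V\notin A'$: in Case~$1$ this holds because $-\id_V\notin A=A'$ by assumption, and in Case~$2$ because $\det(-\id_V)=(-1)^{\dim(V)}=-1$ (as $\dim(V)$ is odd) places $-\id_V$ outside $\SL(V)\supseteq A_1=A'$. Consequently $a\neq -\id_V$, giving $p\geq 1$. With both multiplicities positive, Lemma~\ref{MainCriterion} yields the $E1$-property.

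Since the whole argument is a direct specialization of Lemma~\ref{MainCriterion}, there is no genuine obstacle here; the only point requiring care is the bookkeeping across Case~$1$ and Case~$2$ when identifying the involution $a\in A'$ corresponding to $\alpha$ and when confirming $-\id_V\notin A'$. The essential observation is simply that in odd dimension an involution other than $\pm\id_V$ must have both a $+1$- and a $-1$-eigenvalue.
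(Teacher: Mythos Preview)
Your proof is correct and follows essentially the same route as the paper: both reduce to Lemma~\ref{MainCriterion} by showing that the involution acting on~$V$ has both $+1$ and $-1$ as eigenvalues. The only cosmetic difference is that the paper phrases the key step abstractly---``$G'$ has no non-trivial abelian normal subgroup, so~$\alpha$ is neither in the kernel nor in the center of~$\chi'$''---whereas you verify the equivalent concrete statement $a \neq \pm\id_V$ directly inside $A' \leq \GL(V)$ using the case distinction from Subsection~\ref{PreliminaryConsiderations}.
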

\begin{proof}
As~$G'$ does not have a non-trivial abelian normal subgroup,~$\alpha$ neither is 
in the kernel nor in the center of~$\chi'$. Hence he hypothesis of 
Lemma~\ref{MainCriterion} is satisfied.
\end{proof}

We close by showing that if $\dim( V )$ is large relative to certain subgroups 
of~$G$, then $(G,V,n)$ has the $E1$-property. We will use the following 
notation. If $\alpha \in \Aut(G)$, and if~${p}$ is a prime dividing~$|\alpha|$, 
we write~$\alpha_{(p)}$ for an element of order~${p}$ in $\langle \alpha \rangle$.
\begin{lem}
\label{LargeDegrees}
Assume {\rm Notation~\ref{HypoSimple}}. Suppose that there is $g \in G$ such 
that $\alpha := \ad_g \circ \nu$ has even order and that
$$\dim(V) > (|\alpha| - 1)|C_{G}( \alpha_{(p)} )|^{1/2},$$
for all primes~${p}$ with ${p} \mid |\alpha|$.

Then $(G,V,n)$ has the $E1$-property.
\end{lem}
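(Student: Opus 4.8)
The plan is to reduce the claim to an application of Lemma~\ref{AllConstituents} combined with the transfer mechanism of Lemma~\ref{MainCriterion}. Via the isomorphism $\rho'|_{A'}\colon A' \rightarrow G'$ of Lemma~\ref{GPrimeAndA}, the character $\chi'$ of $G'$ afforded by $V$ pulls back to the character of $A'$ afforded by $V$, and $\langle \alpha \rangle \leq G'$ corresponds to a cyclic subgroup generated by an element $t$ with $\rho'(t) = \alpha$ (namely $t = gn$ in Case~1 and $t = gn_1$ in Case~2). The hypothesis that $\dim(V) > (|\alpha|-1)|C_G(\alpha_{(p)})|^{1/2}$ for every prime $p \mid |\alpha|$ should be massaged into the single inequality $\chi'(1) > (|\alpha|-1)M^{1/2}$ required by Lemma~\ref{AllConstituents}, where $M := \max\{\,|C_{G'}(\beta)| \mid \beta \in \langle\alpha\rangle\setminus\{1\}\,\}$ computed inside $G'$, with $G'$ playing the role of the normal subgroup and $\langle\alpha\rangle$ the relevant cyclic group.

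First I would verify the hypotheses of Lemma~\ref{CharacterEstimate} and Lemma~\ref{AllConstituents} in the present setting. Here the ambient group is $G'$, the normal subgroup is $\Inn(G) \cong G$ (with $G'/\Inn(G)$ cyclic, hence abelian), and $\chi'$ restricts irreducibly to $\Inn(G)$ since $\chi = \Res^{G'}_{\Inn(G)}(\chi')$ is the irreducible character afforded by the irreducible module $V$. Thus Lemma~\ref{CharacterEstimate} applies with $G'$ in place of $G'$ and $\Inn(G)$ in place of $G$. Next I would control $M$: for any nontrivial $\beta \in \langle\alpha\rangle$, the centralizer $C_{G'}(\beta)$ satisfies $|C_{G'}(\beta)| \leq |G'/\Inn(G)|\,|C_{\Inn(G)}(\beta)|$ by the index computation in the proof of Lemma~\ref{CharacterEstimate}, but more usefully I want a bound purely in terms of fixed-point subgroups $C_G(\alpha_{(p)})$. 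The key monotonicity observation is that if $\beta$ is a nontrivial power of $\alpha$, then some prime-order element $\alpha_{(p)} \in \langle\alpha\rangle$ lies in $\langle\beta\rangle$ (take $p$ a prime dividing $|\beta|$), whence $C_G(\beta) \leq C_G(\alpha_{(p)})$ and $|C_{\Inn(G)}(\beta)| \leq \max_p |C_G(\alpha_{(p)})|$.

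Having secured the estimate $M^{1/2} \leq \max_p |C_G(\alpha_{(p)})|^{1/2}$ (up to tracking the cyclic factor $|G'/\Inn(G)|$, which I expect to absorb cleanly because every element of $\langle\alpha\rangle\setminus\Inn(G)$ still has a fixed-point bound governed by the relevant $\alpha_{(p)}$), the given hypothesis yields $\chi'(1) = \dim(V) > (|\alpha|-1)M^{1/2}$. Lemma~\ref{AllConstituents} then guarantees that $\Res^{G'}_{\langle\alpha\rangle}(\chi')$ contains every irreducible character of $\langle\alpha\rangle$ as a constituent; in particular it contains all the real ones with positive multiplicity. Since $\alpha$ has even order by hypothesis, Lemma~\ref{MainCriterion} applies directly and delivers the $E1$-property for $(G,V,n)$.

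The main obstacle I anticipate is the bookkeeping in bounding $M$, specifically handling powers $\beta$ of $\alpha$ that lie outside $\Inn(G)$, where the naive fixed-point subgroup $C_G(\beta)$ is not literally defined and one must instead compare $C_{G'}(\beta)$ with an inner fixed-point subgroup. The clean resolution is to note that for such $\beta$ the cyclic group $\langle\beta\rangle$ still contains some prime-order element, and the factor $|G'/\Inn(G)| = |\langle\alpha\rangle\Inn(G)/\Inn(G)|$ divides $|\alpha|$, so the relevant products remain controlled by the stated quantities $|\alpha|$ and $|C_G(\alpha_{(p)})|$; I would isolate this as the one genuinely technical step and make the prime-by-prime bound explicit there.
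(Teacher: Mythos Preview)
Your approach is essentially the same as the paper's: apply Lemma~\ref{AllConstituents} to the inclusion $\Inn(G) \unlhd G'$, bound $M$ via the observation that every nontrivial $\beta \in \langle\alpha\rangle$ contains some $\alpha_{(p)}$ in $\langle\beta\rangle$ so that $C_G(\beta) \leq C_G(\alpha_{(p)})$, and then invoke Lemma~\ref{MainCriterion}.

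Your anticipated obstacle in the final paragraph is not real, and the paper's proof is correspondingly shorter than what you outline. For any $\beta \in G' \leq \Aut(G)$, the symbol $C_G(\beta)$ simply means the fixed-point subgroup $\{\,g \in G : \beta(g) = g\,\}$, which is perfectly well defined whether or not $\beta$ is inner; since $G$ is non-abelian simple and hence centerless, one has $|C_{\Inn(G)}(\beta)| = |C_G(\beta)|$ directly. Thus the quantity $M$ in Lemma~\ref{AllConstituents} (with $\Inn(G)$ playing the role of the normal subgroup) is exactly $\max_{1 \neq \beta \in \langle\alpha\rangle} |C_G(\beta)|$, and no factor $|G'/\Inn(G)|$ ever needs to be absorbed.
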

\begin{proof}
We will make use of Lemma~\ref{AllConstituents} for the inclusion
$\Inn(G) \unlhd G'$. Notice that $|C_{\text{\rm Inn}(G)}( \beta )| = |C_G( \beta )|$ for
every $\beta \in G'$. For every $1 \neq \beta \in \langle \alpha \rangle$, 
there is a prime~${p}$ dividing~$|\alpha|$ such that $C_G( \beta ) \leq 
C_G( \alpha_{(p)} )$. By hypothesis, 
$$\chi'(1) > (|\alpha|-1) |C_G(\beta)|^{1/2}$$
for all $1 \neq \beta \in \langle \alpha \rangle$.
Hence $\Res^{G'}_{\langle \alpha \rangle}( \chi' )$ contains all irreducible
characters of $\langle \alpha \rangle$ with positive multiplicity by
Lemma~\ref{AllConstituents}. The assertion follows from 
Lemma~\ref{MainCriterion}.
\end{proof}

\subsection{Some special cases}
We prove the $E1$-property for non-abelian simple groups with special
automorphism groups. 

\begin{lem}
\label{AutGEqualG}
If $\nu \in \Inn(G)$, then $(G,V,n)$ has the $E1$-property. In particular,~$G$ 
has the $E1$-property if $\Aut(G) = \Inn(G)$. 
\end{lem}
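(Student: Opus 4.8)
The plan is to show that the hypothesis $\nu \in \Inn(G)$ forces $n$ to be, up to sign, the image under $\rho$ of a single group element, and then to produce the required fixed vector by a direct choice of $g$. First I would write $\nu = \ad_h$ for some $h \in G$. Under the identification of $G$ with $\rho(G) \leq \GL(V)$ used throughout Subsection~\ref{PreliminaryConsiderations}, the element $\rho(h)$ has finite order, lies in $\SL(V)$ (recall $G$ is perfect, so $\rho(G) \leq \SL(V)$), normalizes $\rho(G)$, and induces the automorphism $\nu$. As already observed there, any finite-order element of $\GL(V)$ inducing $\nu$ differs from $n$ by an element of $C_{\GL(V)}(\rho(G)) = \{ x\,\id_V \mid 0 \neq x \in \mathbb{R} \}$; since both $n$ and $\rho(h)$ have finite order, the scalar is $\pm 1$, so $n = \varepsilon\,\rho(h)$ with $\varepsilon \in \{1,-1\}$.

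If $\varepsilon = 1$, so $n = \rho(h)$, I would simply take $g := h^{-1}$, giving $\rho(g)n = \id_V$, which has eigenvalue~$1$; hence $(G,V,n)$ has the $E1$-property. If $\varepsilon = -1$, so $n = -\rho(h)$, the task is to find $g \in G$ with $\rho(g)n = -\rho(gh)$ having eigenvalue~$1$, equivalently to exhibit an element of $\rho(G)$ with eigenvalue~$-1$. Here I would invoke the Feit--Thompson theorem: a non-abelian finite simple group has even order and therefore contains an involution~$t$. Since $\rho$ is faithful, $\rho(t) \neq \id_V$ while $\rho(t)^2 = \id_V$, so $\rho(t)$ must have $-1$ among its eigenvalues. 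Taking $g := t h^{-1}$ then yields $\rho(g)n = -\rho(t)$, which has eigenvalue~$1$, as desired.

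For the ``in particular'' clause I would argue that if $\Aut(G) = \Inn(G)$, then in every triple satisfying Notation~\ref{HypoSimple} the induced automorphism $\nu$ automatically lies in $\Inn(G)$. The first part then applies to every admissible module~$V$ and every admissible~$n$, so $(G,V)$ has the $E1$-property for each non-trivial irreducible $\mathbb{R}G$-module $V$ of odd dimension; that is, $G$ has the $E1$-property.

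The only genuinely non-formal step is the case $\varepsilon = -1$, where one must manufacture an eigenvalue~$-1$. The point I expect to be the crux is that faithfulness forces $\rho(t)$ to be a \emph{nontrivial} involution in $\GL(V)$, and a nontrivial involution necessarily has $-1$ as an eigenvalue; thus no determinant computation or dimension-parity bookkeeping is needed beyond the mere existence of an involution, which is where the Feit--Thompson theorem enters.
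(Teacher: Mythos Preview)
Your proof is correct, but it takes a more direct route than the paper's. The paper simply observes that, since $\nu \in \Inn(G)$, one may choose $g \in G$ so that $\alpha := \ad_g \circ \nu$ is an involution in $\Aut(G)$, and then invokes Corollary~\ref{MainCriterionCor}. That corollary in turn rests on Lemma~\ref{MainCriterion}, which establishes the $E1$-property via the character $\chi'$ of $G'$ and the two-case analysis of Subsection~\ref{PreliminaryConsiderations}. You bypass this machinery entirely: you unwind the $\pm$-dichotomy from Subsection~\ref{PreliminaryConsiderations} by hand, reducing to the observation that a faithful involution has eigenvalue~$-1$. Both arguments ultimately need an involution in~$G$ and hence the Feit--Thompson theorem (the paper uses it tacitly to produce the involution $\alpha$). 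Your approach is more self-contained for this lemma; the paper's approach has the advantage of fitting into the general framework of Lemma~\ref{MainCriterion} and Corollary~\ref{MainCriterionCor}, which are reused repeatedly later (e.g.\ in Lemma~\ref{SplitOfOrder2} and throughout Section~\ref{OddCharacteristic}).
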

\begin{proof}
If $\nu \in \Inn(G)$, there is $g \in G$ such that $\alpha = \ad_g \circ \nu$ is 
an involution. The claim follows from Corollary~\ref{MainCriterionCor}.
\end{proof}

\begin{lem}
\label{SplitOfOrder2}
If $\Aut(G)$ is a split extension of~$\Inn(G)$ with
a group of order~$2$, then~$G$ has the $E1$-property. 
\end{lem}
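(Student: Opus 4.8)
The plan is to reduce everything to the two criteria already established, namely Lemma~\ref{AutGEqualG} and Corollary~\ref{MainCriterionCor}, using the splitting hypothesis to produce a convenient involution in the non-inner coset of $\Aut(G)$. So I assume Notation~\ref{HypoSimple}, let~$V$ be an arbitrary non-trivial irreducible $\mathbb{R}G$-module of odd dimension with representation~$\rho$, and let~$n$ be of finite order normalizing~$\rho(G)$, inducing the automorphism~$\nu$. By definition of the $E1$-property for~$G$, it suffices to prove that $(G,V,n)$ has the $E1$-property for every such choice of~$V$ and~$n$.

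By hypothesis there is an involution $t \in \Aut(G)$ with $\Aut(G) = \Inn(G) \rtimes \langle t \rangle$; in particular $t \notin \Inn(G)$ and $|\Out(G)| = 2$. If $\nu \in \Inn(G)$, then Lemma~\ref{AutGEqualG} immediately gives the $E1$-property for $(G,V,n)$, so I may assume $\nu \notin \Inn(G)$.

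Since $\Out(G)$ has order two, both~$\nu$ and~$t$ lie in the unique non-trivial coset of $\Inn(G)$ in $\Aut(G)$. Hence $\nu \circ t \in \Inn(G)$, and as $G$ is simple non-abelian we have $Z(G) = \{1\}$, so $\Inn(G)$ is identified with~$G$ and I may write $\nu = \ad_h \circ t$ for some $h \in G$. Setting $g := h^{-1}$ and recalling that $\ad_g \circ \ad_h = \ad_{gh} = \ad_1 = \id$, I obtain
$$\alpha := \ad_g \circ \nu = \ad_{h^{-1}} \circ \ad_h \circ t = t,$$
which is an involution. Corollary~\ref{MainCriterionCor} then yields the $E1$-property for $(G,V,n)$.

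Since~$V$ and~$n$ were arbitrary, this shows that~$G$ has the $E1$-property. I do not expect any genuine obstacle here beyond the bookkeeping: the only real input is the observation that a split complement of order two furnishes an involution~$t$ in the outer coset, after which any outer automorphism~$\nu$ arising from such an~$n$ can be corrected by an inner automorphism so as to coincide with~$t$. It is precisely the non-split situation, where the outer coset contains no involution, that this lemma is designed to exclude, and which will require the more elaborate large degree method elsewhere.
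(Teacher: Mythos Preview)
Your proof is correct and follows essentially the same route as the paper: the paper's argument is the two-line observation that the splitting hypothesis guarantees some $g \in G$ with $\ad_g \circ \nu$ an involution, followed by Corollary~\ref{MainCriterionCor}. You have simply unpacked this by treating the inner and outer cases for~$\nu$ separately (invoking Lemma~\ref{AutGEqualG} for the former), whereas the paper handles both at once without an explicit case split; the content is the same.
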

\begin{proof}
By hypothesis, there is $g \in G$ such that 
$\alpha = \ad_g \circ \nu$ is an involution.
We are done by Corollary~\ref{MainCriterionCor}.
\end{proof}

\begin{cor}
\label{AlternatingAndSporadicGroups}
If $G = A_n$, the alternating group on $n$-letters with $n \neq 6$, or if~$G$ 
is a sporadic simple group, then~$G$ has the $E1$-property.
\end{cor}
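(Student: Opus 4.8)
The plan is to reduce everything to the two preceding lemmas, Lemma~\ref{AutGEqualG} and Lemma~\ref{SplitOfOrder2}. Between them these cover every non-abelian simple group~$G$ whose automorphism group is a split extension of $\Inn(G)$ by a group of order at most two: if $\Aut(G) = \Inn(G)$, then $\nu \in \Inn(G)$ automatically and Lemma~\ref{AutGEqualG} gives the $E1$-property, while if $\Aut(G)$ splits over $\Inn(G)$ with complement of order two, Lemma~\ref{SplitOfOrder2} applies. So the whole corollary becomes a matter of verifying, group by group, that $\Aut(G)$ has one of these two shapes; no further representation-theoretic input is needed.

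First I would dispose of the alternating groups. Since~$G$ is non-abelian simple in the sense of Notation~\ref{HypoSimple}, the relevant range is $n \geq 5$, and I exclude $n = 6$. For every such~$n$ it is classical that $\Aut(A_n) = S_n$, with $\Inn(A_n) \cong A_n$ of index two, and that $S_n = A_n \rtimes \langle \tau \rangle$ for any transposition~$\tau$. Hence $\Aut(A_n)$ is a split extension of $\Inn(A_n)$ by a group of order two, and Lemma~\ref{SplitOfOrder2} yields the $E1$-property for $A_n$, $n \neq 6$.

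Next I would handle the sporadic simple groups. Here I appeal to the tabulated data in the Atlas: for each of the twenty-six sporadic groups~$G$, the outer automorphism group $\Out(G)$ is either trivial or cyclic of order two, and whenever $\Out(G)$ has order two the corresponding extension $\Aut(G)$ of $\Inn(G)$ splits. When $\Out(G)$ is trivial one has $\Aut(G) = \Inn(G)$ and invokes Lemma~\ref{AutGEqualG}; otherwise one invokes Lemma~\ref{SplitOfOrder2}. In every case~$G$ has the $E1$-property.

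I do not expect a genuine obstacle: once the two lemmas are established, the corollary is a finite bookkeeping exercise whose only external input is the well-documented structure of the outer automorphism groups. The one delicate point is the exclusion of $A_6$, whose outer automorphism group has order four and therefore falls outside the reach of Lemma~\ref{SplitOfOrder2}; this group is deliberately left out of the statement and must be treated separately, which is why the exceptional clause $n \neq 6$ appears.
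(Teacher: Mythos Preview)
Your proposal is correct and follows essentially the same approach as the paper: both reduce to Lemmas~\ref{AutGEqualG} and~\ref{SplitOfOrder2} by citing the classical fact that $\Aut(A_n) = S_n$ for $n \neq 6$ and the Atlas data showing that each sporadic group has $\Out(G)$ trivial or of order two with a split extension. The paper's proof is essentially a compressed version of what you wrote.
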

\begin{proof}
It is well known that $\Aut(A_n) = S_n$, the symmetric group on~$n$ letters, 
unless $n = 6$, so the result follows from Lemma~\ref{SplitOfOrder2} in 
these cases.

If $G$ is a sporadic simple group, then either $\Aut(G) \cong G$ or $\Aut(G)$ is 
a split extension of~$\Inn(G)$ with a group of order~$2$; see \cite{Atlas}. We 
are done with Lemmas~\ref{AutGEqualG} and~\ref{SplitOfOrder2}
\end{proof}

\medskip
\noindent
The group $G = A_6$ excluded in Corollary~\ref{AlternatingAndSporadicGroups} 
will be treated as $G = \PSL_2(9)$.

\section{Simple groups of Lie type}
\label{SimpleGroupsOfLieType}

Here we introduce the simple groups of Lie type and some of their properties 
relevant to our investigations.

\subsection{The groups}
\label{TheGroups}
Let~$G$ be a finite simple group of Lie type defined over a field of 
characteristic~$r$. For a concise introduction to these groups see
\cite[Section~$2.2$]{GLS}, for a thorough treatment refer to~\cite{C1}. In the 
finitely many cases where~$r$ is not uniquely
determined by~$G$ (cf.\ \cite[Theorem~$2.2.10$]{GLS}), we choose~$r$ to be odd. 
Every finite simple group of Lie type is isomorphic to exactly one of the 
groups listed in Table~\ref{TableOfGroups}. In this table,~$q$ can be any power 
of~$r$, subject to the conditions given in the last column of the table. The 
groups in Lines~$1$--$6$ are the classical groups, and we give their classical
names as well as their ``Lie type'' names. For simplicity, we call the other 
groups exceptional groups of Lie type. The groups in Line~$14$ are the Suzuki 
groups, and the groups in Lines $15$,~$16$ the Ree groups. Finally the group in 
Line~$17$ is known as the Tits group. The groups in Lines~$2$,~$6$ and
$12$--$17$ are called twisted groups, the others are the untwisted groups.
Finally, in Lines~$1$--$13$, we let the positive integer~$f$ be such that 
$q = r^f$. Our notation for the Suzuki and Ree groups differs from the one used 
in~\cite{GLS}.
What we write as ${^2\!B}_2(q)$, ${^2G}_2(q)$ and ${^2\!F}_4(q)$, is written as
${^2\!B}_2(\sqrt{q})$, ${^2G}_2(\sqrt{q})$, respectively ${^2\!F}_4(\sqrt{q})$,
with $q = 2^{2m+1}$, $3^{2m+1}$, respectively $2^{2m+1}$; see 
\cite[Definition~$2.2.4$]{GLS}.

\setlength{\extrarowheight}{0.5ex}
\begin{table}
\caption{\label{TableOfGroups} The simple groups of Lie type}
$$
\begin{array}{rrrc}\hline\hline
\text{\rm Row} & \multicolumn{1}{c}{\text{\rm Names}} & \text{\rm Rank} & \text{\rm Conditions} \\ \hline\hline
1 & A_{d-1}(q), \PSL_{d}(q) & d \geq 2 & (d,q) \neq (2,2), (2,3), (2,4), (3,2) \\ 
2 & {^2\!A}_{d-1}(q), \PSU_{d}(q) & d \geq 3 & (d,q) \neq (3,2), (4,2) \\ 
3 & B_d(q), \P\Omega_{2d+1}(q) & d \geq 2 & (d,q) \neq (2,2) \\ 
4 & C_d(q), \PSp_{2d}(q) & d \geq 3 & q \text{\rm\ odd} \\ 
5 & D_d(q), \P\Omega^+_{2d}(q) & d \geq 4 &  \\ 
6 & {^2\!D}_d(q), \P\Omega^-_{2d}(q) & d \geq 4 &  \\ 
7 & G_2(q) & & q \geq 3 \\
8 & F_4(q) & & \\
9 & E_6(q) & & \\
10 & E_7(q) & & \\
11 & E_8(q) & & \\
12 & {^3\!D}_4(q) & & \\
13 & {^2\!E}_6(q) & & \\
14 & {^2\!B}_2(q) & & q = 2^{2m+1} > 2 \\
15 & {^2G}_2(q) & & q = 3^{2m+1} > 3 \\
16 & {^2\!F}_4(q) & & q = 2^{2m+1} > 2 \\
17 & {^2\!F}_4(q)' & & q = 2 \\ \hline
\end{array}
$$
\end{table}

\subsection{A $\sigma$-setup for~$G$} \label{ASigmaSetupForG}
It is convenient to introduce a $\sigma$-setup $(\overline{G}, \sigma)$ for~$G$; 
see \cite[Definition~$2.2.1$]{GLS}. We choose~$\overline{G}$ as a simple, adjoint 
algebraic group over the algebraic closure~$\mathbb{F}$ of the field with~$r$ 
elements, and~$\sigma$ as a suitable Steinberg morphism of~$\overline{G}$, 
such that $G = O^{r'}( \overline{G}^\sigma )$. If necessary, we will specify the
choice of~$\overline{G}$. Contrary to the usage in~\cite{GLS}, we will in general 
write $\overline{H}^\sigma$ rather than $C_{\overline{H}}( \sigma )$ for the set 
of $\sigma$-fixed points of a $\sigma$-stable subgroup~$\overline{H}$ 
of~$\overline{G}$. 

\subsection{The $BN$-pair and the Weyl group}
\label{BNPair}
Here, we mainly follow \cite[Chapters~$1$,~$2$]{GLS}. See also
\cite[Subsections~$8.5$ and~$13.5$]{C1} and 
\cite[Subsections~$2.5$ and~$2.6$]{C2}. 

The algebraic group~$\overline{G}$ has a $BN$-pair 
$(\overline{B}, \overline{N})$, where~$\overline{B}$ is a $\sigma$-stable
Borel subgroup of~$\overline{G}$ whose unipotent radical~$\overline{U}$ is also
$\sigma$-stable. Moreover,~$\overline{B}$ contains a $\sigma$-stable maximal
torus~$\overline{T}$ such that $\overline{B} = \overline{T}\,\overline{U}$.
Finally, $\overline{N} = N_{\overline{G}}( \overline{T} )$. We will fix such a
$BN$-pair and call $\overline{T}$ the standard (maximal) torus 
of~$\overline{G}$.
The pair $(\overline{T},\overline{B})$ gives rise to the root system 
$\Sigma = \Sigma( \overline{G} )$ of~$\overline{G}$, the set 
$\Pi \subseteq \Sigma$ of fundamental roots and the Dynkin diagram of~$\Sigma$.
The root system~$\Sigma$ is a subset of $X( \overline{T} ) = 
\Hom( \overline{T}, \mathbb{F}^* )$. For the possible Dynkin diagrams and their 
automorphisms see \cite[Subsection~$1.19$]{C2}.
The Weyl group of~$\overline{G}$ with respect to~$\overline{T}$ is denoted
by~$W( \overline{G} )$. By definition,
$W( \overline{G} ) = N_{\overline{G}}( \overline{T} )/\overline{T}$. Also,
$W( \overline{G} )$ can be identified with the Weyl group of~$\Sigma$ in a
natural way.

For every subset $I \leq \Pi$, there is a Levi subgroup $\overline{L}_I$
of~$\overline{G}$ contained in a parabolic subgroup $\overline{P}_I$, whose
unipotent radical is denoted by $\overline{U}_I$; see 
\cite[Theorem~$1.13.2$]{GLS}. In fact, 
$\overline{P}_I = \overline{U}_I\overline{L}_I$, a semidirect product. The 
Levi subgroups $\overline{L}_I$, where~$I$
runs through the subsets of~$\Pi$, are called the standard Levi subgroups
of~$\overline{G}$. A Levi subgroup of~$\overline{G}$ is a subgroup 
of~$\overline{G}$ which is conjugate in~$\overline{N}$ to a standard Levi 
subgroup. The Weyl group of a standard Levi subgroup~$\overline{L}_I$
is denoted by $W( \overline{L}_I )$. This is the parabolic subgroup
$W(\overline{G})_I$ of~$W(\overline{G})$ generated by the simple reflections
corresponding to the roots in~$I$.

We will assume that $\sigma$ is in standard form relative to~$\overline{B}$
and~$\overline{T}$, i.e.\ that~$\sigma$ satisfies the conditions 
of~\cite[Theorem~$2.2.3$]{GLS}.

The $BN$-pair of~$\overline{G}$ gives rise to a split $BN$-pair of~$G$ of 
characteristic~$r$; see \cite[Subsection~$2.5$]{C2} for the definition.
Put $B = \overline{B} \cap G$, $N = N_{\overline{G}}( \overline{T} ) \cap G$ and
$T = \overline{T} \cap G$; see \cite[Section~$2.3$]{GLS}. We call~$T$ the 
standard (maximal) torus of~$G$. We have $B = UT$ with 
$U := \overline{U}^{\sigma}$, the standard (maximal) unipotent subgroup of~$G$. 
We also have $\overline{G}^{\sigma} = G\overline{T}^{\sigma}$ (see
\cite[Theorem~$2.2.6$(g)]{GLS}), and $\overline{T}^{\sigma} \cap G = T$.
In particular, $\overline{G}^{\sigma}/G \cong \overline{T}^{\sigma}/T$.
Clearly,~$W(\overline{G})$ is $\sigma$-stable, and we 
write $W(\overline{G})^\sigma$ for its subgroup of $\sigma$-fixpoints. 
Then~$W(\overline{G})^\sigma$ is the Weyl group of the $BN$-pair of~$G$, i.e.\ 
$W(\overline{G})^\sigma = N/T$; see \cite[Theorem~$2.3.4$]{GLS}. 
Since~$\overline{T}$ is connected, we also have $N/T = W(\overline{G})^\sigma 
\cong N_{\overline{G}^{\sigma}}( \overline{T} )/\overline{T}^{\sigma}$. 
Since $\overline{T}$ and~$\overline{B}$ are $\sigma$-stable, the 
morphism~$\sigma$ acts on the root system~$\Sigma$ and fixes~$\Pi$. In 
particular,~$\sigma$ determines a symmetry~$\iota$ of the Dynkin diagram 
of~$\Pi$. 

A standard Levi subgroup $\overline{L}_I$ for $I \leq \Pi$ is~$\sigma$-stable,
if and only if~$I$ is fixed by~$\iota$. The standard Levi subgroups of~$G$ 
are the subgroups of the form $\overline{L}_I \cap G$, where~$I$ runs through 
the $\iota$-stable subsets of~$\Pi$. Let $I \subseteq \Pi$ be $\iota$-stable. 
We then put $L_I := \overline{L}_I \cap G$. The Weyl group of~$L_I$ equals 
$W( \overline{G} )_I^{\sigma}$; this is a parabolic subgroup of the Coxeter 
group~$W(\overline{G})^{\sigma}$. The standard parabolic 
subgroup~$\overline{P}_I$ of~$\overline{G}$ is also $\sigma$-stable, and we put 
$P_I := \overline{P}_I \cap G$. The unipotent radical of $P_I$ equals
$U_I := \overline{U}_I \cap G = \overline{U}^{\sigma}_I$. By definition, a Levi 
subgroup of~$G$ is a 
subgroup of~$G$ which is conjugate in~$N$ to a standard Levi subgroup. This is
sometimes also called a split Levi subgroup to distinguish it from the subgroups
of the form $\overline{L}^{\sigma}$, where~$\overline{L}$ is a $\sigma$-stable
Levi subgroup of~$\overline{G}$.

\subsection{Harish-Chandra theory}
\label{HarishChandraTheory}
We will apply Harish-Chandra theory to~$G$; see \cite[Chapters~$9$--$11$]{C2}
or~\cite[Chapter~$5$]{DiMi2}. 
If $P \leq G$ is a parabolic subgroup of~$G$ with Levi decomposition $P = UL$, 
where~$U$ denotes the unipotent radical of~$P$, we write $R^G_L$ and 
${^*\!R}^G_L$ for Harish-Chandra induction from~$L$, respectively Harish-Chandra 
restriction to~$L$. It is known that these operations on $\mathbb{C}G$-mod,
respectively $\mathbb{C}L$-mod, are independent of the chosen parabolic
subgroup of~$G$ containing~$L$ as a Levi complement. Moreover, these operations
are adjoint with respect to the standard scalar product on class function 
of~$G$, respectively~$L$. 

By definition, a principal series module of~$G$ is a composition factor of 
$R_T^G( \mathbb{C} )$, where~$\mathbb{C}$ denotes the trivial 
$\mathbb{C}T$-module. (We use the term principal series in the narrow sense to
mean the Harish-Chandra series corresponding to the cuspidal pair consisting of~$T$
and the trivial $\C T$-module.)
The principal series modules are labeled by the 
irreducible characters of $W(\overline{G})^{\sigma}$, and we usually denote 
a principal series module of~$G$ by the corresponding irreducible character 
of~$W(\overline{G})^{\sigma}$ or the label of this character. An irreducible 
$\mathbb{R}G$-module~$V'$ of odd dimension occurs in $R_T^G( \mathbb{R} )$ as a
composition factor, if and only if $\mathbb{C} \otimes V'$ is a principal series 
module; in this situation, we also call~$V'$ a principal series module. 

Principal series modules are preserved by Harish-Chandra induction and 
restriction, in the sense that all composition factors of a Harish-Chandra
induced principal series module are principal series modules; the 
analogous statement holds for Harish-Chandra restriction. For these statements
see \cite[Proposition~$5.3.9$]{DiMi2}.

A standard Levi subgroup~$L_I$ determines a parabolic 
subgroup~$W( \overline{G} )_I^{\sigma}$ of~$W( \overline{G} )^{\sigma}$. By the 
Howlett-Lehrer comparison theorem \cite[Theorem 5.9]{HowLeh2}, Harish-Chandra 
induction $R_{L_I}^G$ and restriction ${^*\!R}_{L_I}^G$ correspond to 
$\Ind_{W( \overline{G} )_I^{\sigma}}^{W( \overline{G} )^{\sigma}}$, respectively 
$\Res_{W( \overline{G} )_I^{\sigma}}^{W( \overline{G} )^{\sigma}}$. Taking 
$I = \emptyset$, we have $P_I = B$ and $L_I = T$. Thus, as already noted above, 
the composition factors of $R_T^G( \mathbb{C} )$ correspond to the 
irreducible constituents of $\Ind_{\{1\}}^{W( \overline{G} )^{\sigma}}( 1 )$, 
i.e.\ to the irreducible characters of~$W( \overline{G} )^{\sigma}$, and if the 
principal series module~$V'$ corresponds to 
$\vartheta \in \Irr( W( \overline{G} )^{\sigma} )$, then the multiplicity 
of~$V'$ as a composition factor of $R_T^G( \mathbb{C} )$ equals~$\vartheta(1)$.

The following lemma provides the base for the applications of Harish-Chandra
theory to our problem.
\begin{lem}
\label{MainHCLemma}
Let~$G$ be as in {\rm Subsection~\ref{TheGroups}}. Let $(V,n,\nu)$ be as in
{\rm Notation~\ref{HypoSimple}} and let~$\chi$ denote the character of~$V$. 

Suppose that there is a $\nu$-stable, proper parabolic subgroup~$P$ with a 
$\nu$-stable Levi complement~$L$ and there is $\psi \in \Irr(L)$ real, of odd 
degree, $\nu$-invariant and non-trivial, such that~$\chi$ occurs with odd 
multiplicity in $R_L^G( \psi )$. Then if~$L$ has the $E1$-property, so 
does~$(G,V,n)$. 
\end{lem}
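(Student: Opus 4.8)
The plan is to use the restriction method (Lemma~\ref{InvariantSubgroup}) with the Harish-Chandra induced module playing the role of the $n$-invariant submodule. First I would set $A' := A$ or $A_1$ as in Definition~\ref{APrimeAndGPrime}, depending on whether we are in Case~$1$ or Case~$2$, and invoke the isomorphism $\rho'|_{A'} \colon A' \to G' = \langle \Inn(G), \nu \rangle$ from Lemma~\ref{GPrimeAndA}. The hypotheses are all phrased in terms of~$\nu$ acting on~$G$, so the natural arena is~$G'$ rather than~$A$ itself; via this isomorphism, $V$ becomes an $\mathbb{R}G'$-module with character~$\chi'$ as in Remark~\ref{VAsGPrimeModule}.

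The core construction is to produce the right subgroup of~$A'$ to restrict to. Since~$P$, $L$ and~$\psi$ are $\nu$-stable, the parabolic subgroup $P' := \langle \Inn(P), \nu \rangle$ and Levi complement $L' := \langle \Inn(L), \nu \rangle$ are subgroups of~$G'$, and their preimages under~$\rho'$ give subgroups of~$A'$ normalized appropriately by~$n$ (or~$n_1$). The key representation-theoretic point is that~$\chi'$, restricted to the preimage of~$L'$, should contain an extension~$\psi'$ of~$\psi$ to~$L'$, by Clifford theory: because~$\psi$ is $\nu$-invariant, real, and of odd degree, Lemma~\ref{AbsolutelyIrreducible} makes it absolutely irreducible and Gallagher-type extension results (as used in Lemma~\ref{DirectSums}) let~$\psi$ extend to the cyclic-over-$L$ group~$L'$. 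The hypothesis that~$\chi$ occurs with odd multiplicity in $R_L^G(\psi)$, transported to~$G'$, ensures that some such extension~$\psi'$ occurs in $\Res^{G'}_{L'}(\chi')$, so that the corresponding $L'$-submodule~$V_1$ of~$\Res^{G'}_{L'}(V)$ is itself $n$-invariant (equivalently, $\nu$-stable) of odd dimension.

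Having located an $n$-invariant $\mathbb{R}L'$-submodule~$V_1$ affording an extension of the nontrivial odd-degree module~$S$ afforded by~$\psi$, I would apply the restriction machinery. The assumption that~$L$ has the $E1$-property says $(L,S)$ has the $E1$-property, and since~$\psi$ is nontrivial and odd, Lemma~\ref{DirectSums} (or a Gallagher-extension argument modeled on its proof) upgrades this to $(L',V_1)$ having the $E1$-property for the $\mathbb{R}L'$-module~$V_1$. Translating back through~$\rho'$ to the subgroup $H := (\rho'|_{A'})^{-1}(L')$ of~$A'$, we find that~$n$ normalizes~$\rho(H)$ and~$V_1$ is an $n$-invariant submodule of $\Res^G_H(V)$ with the $E1$-property. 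Lemma~\ref{InvariantSubgroup} then yields $g \in H$ with $\rho(g)n$ having eigenvalue~$1$, so $(G,V,n)$ has the $E1$-property.

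The main obstacle is the parity/extension bookkeeping in passing from $R_L^G(\psi)$ on~$G$ to the module on~$G'$: one must verify that the odd multiplicity of~$\chi$ in $R_L^G(\psi)$ genuinely forces an \emph{extension} of~$\psi$ (not merely some $L'$-constituent) to appear in $\Res^{G'}_{L'}(\chi')$, which requires controlling how the characters of~$W(\overline{G})^\sigma$ and the Clifford theory over the abelian quotients $G'/\Inn(G)$ and $L'/\Inn(L)$ interact. Because~$\psi$ is~$\nu$-invariant and the relevant quotient is cyclic, the extension exists, but keeping the multiplicity odd through Harish-Chandra induction and the Howlett--Lehrer comparison (so that oddness survives) is the delicate step; everything else is an application of the restriction lemmas already established.
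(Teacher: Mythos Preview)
Your plan takes an unnecessary detour through $G'$ and extension theory, and the step you yourself flag as delicate is never resolved. The paper's argument is much more direct and sidesteps it entirely.

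The paper simply identifies $G$ with $\rho(G)\leq\GL(V)$, so that $P$, $L$ and the unipotent radical~$U$ are literally $n$-invariant. By adjointness, $\langle R_L^G(\psi),\chi\rangle=\langle\psi,{^*\!R}_L^G(\chi)\rangle$ is odd. Since $\psi$ is real of odd degree, Lemma~\ref{AbsolutelyIrreducible} yields an irreducible $\mathbb{R}P$-submodule~$S$ of $\Res^G_P(V)$ on which~$U$ acts trivially and whose $L$-character is~$\psi$; the $S$-homogeneous component~$V_1$ of $\Res^G_P(V)$ then has odd dimension. Now $nS$ is again an $\mathbb{R}P$-submodule with~$U$ acting trivially (because~$U$ is $n$-invariant) and $L$-character~$\psi$ (because~$\psi$ is $\nu$-invariant), so $nS\cong S$ as $\mathbb{R}P$-modules and hence $nV_1=V_1$. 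Since~$L$ has the $E1$-property and~$S$ is non-trivial, Lemma~\ref{DirectSums} gives $(L,\Res^P_L(V_1))$ the $E1$-property, whence $(P,V_1)$ has it, and Lemma~\ref{InvariantSubgroup} with $H=P$ finishes.

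The key point you are missing is that no extension of~$\psi$ to any overgroup of~$L$ is ever needed: the $\nu$-invariance of~$\psi$ already forces the $P$-homogeneous component to be $n$-stable, and that is exactly what Lemma~\ref{InvariantSubgroup} asks for. Your route through~$L'$ and Gallagher-type Clifford theory could in principle be pushed through, but the ``delicate step'' of tracking which extension of~$\psi$ appears in $\Res^{G'}_{L'}(\chi')$, and with what parity, is genuine extra work that the paper avoids. Moreover, your final invocation of Lemma~\ref{InvariantSubgroup} with $H=(\rho'|_{A'})^{-1}(L')$ is not quite right as stated: that lemma requires $H\leq G$, whereas your~$H$ is a subgroup of~$A'$ not contained in~$G$; you would need to unwind this back to $H=P\leq G$ anyway.
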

\begin{proof}
As in Subsection~\ref{PreliminaryConsiderations}, we will identify~$G$ with
is image in~$\GL(V)$. Then~$P$ and~$L$ are $n$-invariant by assumption.

By hypothesis, $\langle R_L^G( \psi ), \chi \rangle = 
\langle \psi, {^*\!R}_L^G( \chi ) \rangle$ is odd. As~$\psi$ is real and of odd 
degree, Lemma~\ref{AbsolutelyIrreducible} implies the existence of an irreducible 
$\mathbb{R}P$-submodule~$S$ of~$\Res^G_P( V )$ such that the unipotent 
radical~$U$ of~$P$ acts trivially on~$S$, and the character of~$S$, viewed as 
an $\mathbb{R}L$-module, equals~$\psi$. Moreover, the $S$-homogeneous 
component~$V_1$ of~$\Res^G_P( V )$ has odd dimension.

Since~$\psi$ is $\nu$-invariant,~$nS$ is isomorphic to~$S$ as 
$\mathbb{R}L$-module. As~$U$ is $n$-invariant,~$U$ acts trivially on~$nS$. 
Thus $nS \cong S$ as $\mathbb{R}P$-modules and hence $nS \leq V_1$. It follows 
that~$V_1$ is $n$-invariant. As~$S$ is non-trivial and~$L$ has the 
$E1$-property, $(L, \Res^P_L( V_1 ) )$ has the $E1$-property by 
Lemma~\ref{DirectSums}. But then $(P,V_1)$ also has the $E1$-property.
Lemma~\ref{InvariantSubgroup} completes our proof.
\end{proof}

\subsection{Automorphisms}
\label{AutomorphismsI}
The automorphisms of~$G$ are described in \cite[Section~$2.5$]{GLS}. According 
to \cite[Theorem~$2.5.12$]{GLS}, we have
$\Aut(G) = \Inndiag(G) \rtimes (\Gamma_G\Phi_G)$, where~$\Inndiag(G)$ consists
of the automorphisms of~$G$ induced by conjugation with elements 
of~$\overline{G}^{\sigma}$, so that
$\Inndiag(G) \cong \Inn( \overline{G}^{\sigma} ) \cong \overline{G}^{\sigma}$.  

For some small cases when~$r$ is odd, but more notably when $r = 2$, we 
will need a more precise description of $\Aut(G)$. For simplicity, we only 
consider the groups of Lines~$1$--$12$ of Table~\ref{TheGroups}, and if~$G$
is one of $B_2(q)$, $G_2(q)$ or $F_4(q)$, we assume that $r \neq 2, 3, 2$ in the
respective cases. The cases not treated here will be discussed when they occur.
With these restrictions, we have $\Gamma_G\Phi_G = \Gamma_G \times \Phi_G$,
and~$\Gamma_G$ is isomorphic to the group of symmetries of the Dynkin diagram
of~$\overline{G}$, if~$G$ is as in one of the Lines~$1$,~$5$ or~$9$ of 
Table~\ref{TheGroups}, i.e.\ if~$G$ is one of the groups $\PSL_{d}(q)$,
$\P\Omega^+_{2d}(q)$ or~$E_6(q)$. Moreover, $|\Gamma_G| = 2$ in these cases, 
except for $G = \P\Omega^+_{4}(q)$, in which case~$\Gamma_G$ is isomorphic to
the symmetric group on three letters; see the corresponding Dynkin diagrams 
displayed in Figure~\ref{DynkinDiagramA}. In all other cases,~$\Gamma_G$ is 
trivial (under the restrictions on~$G$ imposed at the beginning of this 
paragraph).  Also,~$\Phi_G$ is cyclic, and we have $|\Phi_G| = f$, if~$G$ is 
untwisted, and $|\Phi_G| = 2f$ if $G$ is twisted, unless $G = {^3\!D}_4(q)$, 
in which case $|\Phi_G| = 3f$.

\begin{figure}[t] 
\caption{\label{DynkinDiagramA} The Dynkin diagrams of $A_{d-1}$,
$E_6$ and $D_4$}
\begin{center}
\begin{picture}(123.5, 40)(20,-10)
\put(  -60,   0){$A_{d-1}$:}
\put(    0,   0){\circle{7}} 
\put(   40,   0){\circle{7}} 
\put(   80,   0){\circle{7}} 
\put(  160,   0){\circle{7}} 
\put(  200,   0){\circle{7}}
\put(  3.5,   0){\line(1,0){ 33}}
\put( 43.5,   0){\line(1,0){ 33}}
\multiput( 88.5,   0)(6,0){11}{\line(1,0){ 3}}
\put(163.5,   0){\line(1,0){ 33}}
\put(   -3,+8){$1$}
\put(   37,+8){$2$}
\put(   77,+8){$3$}
\put(  146.5,+8){$d-2$}
\put(  187.5,+8){$d-1$}
\end{picture}
\end{center}

\begin{center}
\begin{picture}(123.5, 90)(0,-60)
\put(  -80,   0){$E_{6}$:}
\put(    0,   0){\circle{7}}
\put(   40,   0){\circle{7}}
\put(   80,   0){\circle{7}}
\put(  120,   0){\circle{7}}
\put(  160,   0){\circle{7}}
\put(   80, -40){\circle{7}}
\put(  3.5,   0){\line(1,0){ 33}}
\put( 43.5,   0){\line(1,0){ 33}}
\put( 83.5,   0){\line(1,0){ 33}}
\put(123.5,   0){\line(1,0){ 33}}
\put(80,   -3.5){\line(0,-1){ 33}}
\put(   -3,+8){$1$}
\put(   37,+8){$3$}
\put(   77,+8){$4$}
\put(  117,+8){$5$}
\put(  157,+8){$6$}
\put(   77,-56){$2$}
\end{picture}
\end{center}

\begin{center}
\begin{picture}(123.5, 90)(-80,-40)
\put( -160,   0){$D_{4}$:}
\put(  -18, 32,64){\line(1, -2){15}}
\put(  -18,-32,64){\line(1,  2){15}}
\put(  -20, 34,64){\circle{7}}
\put(  -20,-34,64){\circle{7}}
\put(    0,   0){\circle{7}}
\put(   40,   0){\circle{7}}
\put(  3.5,   0){\line(1,0){ 33}}
\put(  -33, 32,64){$1$}
\put(  -33,-39,64){$2$}
\put(   -3,+8){$3$}
\put(   37,+8){$4$}
\end{picture}
\end{center}
\end{figure}
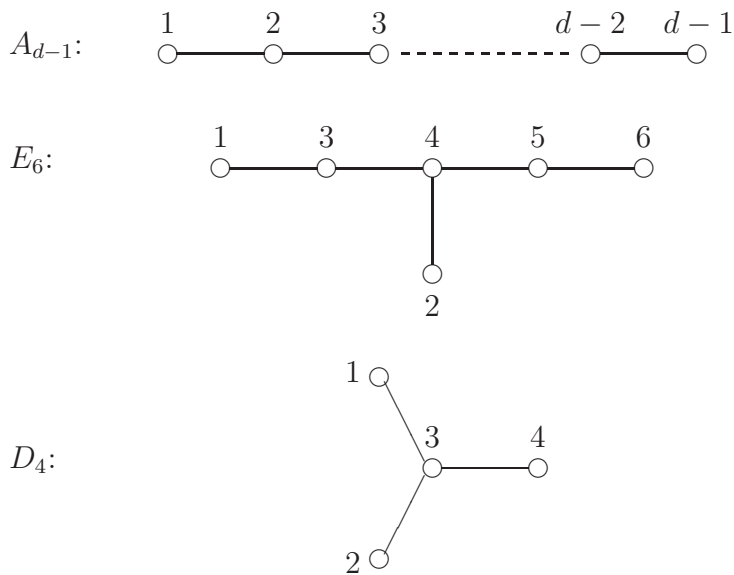

Let us recall further terminology concerning automorphisms of~$G$, following 
\cite[Definition~$2.5.13$]{GLS}. First, we choose a particular generator of 
$\Phi_G$ and a particular element of~$\Gamma_G$. Let $\varphi := \varphi_r$ 
denote the standard Frobenius endomorphism of~$\overline{G}$ introduced in 
\cite[Theorem~$1.15.4$(a)]{GLS}, and put $\Phi_{\overline{G}} := 
\langle \varphi \rangle$ as in \cite[Definition~$1.15.5$(a)]{GLS}. The 
restriction of~$\varphi$ to~$G$, also 
denoted by~$\varphi$, is our preferred generator of~$\Phi_G$. Suppose 
that~$\iota$ is a non-trivial symmetry of the Dynkin diagram of~$\overline{G}$. 
We then denote, by the same letter, the element of~$\Gamma_{\overline{G}}$ 
introduced in \cite[Theorem~$1.15.2$(a), Definition~$1.15.5$(e)]{GLS}, as well 
as the restriction of the latter to~$G$. Such a~$\iota$ will be called a
standard graph automorphism of~$\overline{G}$, respectively~$G$. Notice that
the groups 
$\overline{B}^{\sigma}$, $\overline{T}^{\sigma}$ and $\overline{N}^{\sigma}$, 
and thus also,~$B$,~$T$ and~$N$ are fixed, up to inner automorphisms 
of~$\overline{G}^{\sigma}$, by $\Aut(G)$. Finally, we may choose notation so
that $\sigma = \varphi^f$, if~$G$ is untwisted, and $\sigma = \iota \circ 
\varphi^{f}$ for some $1 \neq \iota \in \Gamma_{\overline{G}}$, otherwise.

Let $\alpha \in \Aut(G)$. Then~$\alpha$ is of the form
$$
\alpha = \ad_g \circ \iota \circ \varphi^b
$$
with $g \in \overline{G}^{\sigma}$, $\iota \in \Gamma_G$ and
$0 \leq b < |\Phi_G|$. 
Then~$\alpha$ is an \textit{inner-diagonal
automorphism} of~$G$ if and only if $\iota = 1$ and $b = 0$. Suppose now 
that~$\Gamma_G$ is non-trivial. If $g = 1$ and $\iota = 1$, any 
$\Aut(G)$-conjugate of~$\alpha$ is a \textit{field automorphism} of~$G$. 
If $\iota \neq 1$ and $b = 0$, any 
$\Aut(G)$-conjugate of~$\alpha$ is a \textit{graph automorphism} of~$G$. Suppose 
now that~$\Gamma_G$ is trivial. If $g = 1$, any $\Aut(G)$-conjugate of~$\alpha$ 
is a \textit{field automorphism} of~$G$, provided $|\alpha|$ is odd if~$G$ is 
twisted and $G \neq {^3\!D}_4(q)$, respectively $3 \nmid |\alpha|$ if 
$G = {^3\!D}_4(q)$. Finally, if~$G$ is twisted,~$\alpha$ is a 
\textit{graph automorphism} of~$G$, if $|\varphi^b|$ is even, respectively
divisible by~$3$ in case $G = {^3\!D}_4(q)$.

\section{Simple groups of Lie type of odd characteristic}
\label{OddCharacteristic}

In this section we let~$G$ be one of the simple groups of Lie type listed in 
Table~\ref{TableOfGroups}, where we assume that $q = r^f$ is odd. We use the 
terminology and the concepts introduced in Section~\ref{SimpleGroupsOfLieType}. 
We also assume Notation~\ref{HypoSimple}. In particular,~$V$ is a non-trivial 
irreducible $\mathbb{R}G$-module of odd dimension,~$\rho$ is the corresponding 
homomorphism $G \rightarrow \GL(V)$, and~$n$ is an element of $\GL(V)$ of finite 
order normalizing~$\rho(G)$. Moreover,~$\nu$ denotes the automorphism of~$G$
induced by~$n$. We say that~$V$ is the Steinberg module of~$G$, if 
$\mathbb{C} \otimes V$ affords the Steinberg representation of~$G$. The main 
goal in this section is to show that the groups considered here are not minimal 
counterexamples to Theorem~\ref{RLTheorem}.

\subsection{Some special cases}
We first deal with the Steinberg modules in some groups of small rank.

\begin{lem}
\label{SmallRankOddCharacteristic}
Let 
$$G \in \{ \PSL_2( q ), \PSL_3( q ), \PSU_3( q ), G_2( 3^{f} ),
{^2G}_2( 3^{2m+1} ) \}$$ 
and suppose that~$V$ is the Steinberg module of~$G$. Then $(G,V)$ has the 
$E1$-property.
\end{lem}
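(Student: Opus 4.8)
The statement concerns the Steinberg module for a short list of groups of small Lie rank in odd characteristic. For each of these groups, the Steinberg character has degree equal to the $r$-part of $|G|$, i.e. $q$ (for $\PSL_2$), $q^3$ (for $\PSL_3$, $\PSU_3$), $q^6$ (for $G_2$), $q^3$ (for ${}^2G_2$), so in every case $\dim(V)$ is an odd power of the odd prime $r$, hence odd, consistent with Notation~\ref{HypoSimple}. The Steinberg module is $\nu$-invariant for every automorphism $\nu$ of $G$, since it is uniquely determined. My plan is to verify the hypothesis of the large degree criterion, Lemma~\ref{LargeDegrees}, by choosing $g$ so that $\alpha := \ad_g \circ \nu$ is suitable, and otherwise to fall back on Corollary~\ref{MainCriterionCor} whenever $\nu$ can be adjusted to an involution.

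First I would dispose of the case $\nu \in \Inn(G)$ by Lemma~\ref{AutGEqualG}, so I may assume $\nu$ projects to a non-trivial element of $\Out(G)$. The structure of $\Out(G)$ for these groups is explicit: for $\PSL_2(q)$ it is $C_2 \times C_f$ (diagonal times field), for $\PSL_3(q)$ and $\PSU_3(q)$ it involves diagonal automorphisms of order $\gcd(3,q\mp 1)$ together with field (and graph, for $\PSL_3$) automorphisms, and for $G_2(3^f)$ and ${}^2G_2(3^{2m+1})$ the outer automorphism group is cyclic of order $f$, respectively $2m+1$, i.e. purely a group of field automorphisms (recall $\Gamma_G$ is trivial here and, for the Ree groups, the graph symmetry is already absorbed into $\sigma$). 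So in the hard cases $\alpha = \ad_g \circ \nu$ has even order precisely when a diagonal or graph part of even order is present.

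The main computation is then the bound $\dim(V) > (|\alpha|-1)\,|C_G(\alpha_{(p)})|^{1/2}$ for each prime $p \mid |\alpha|$. The key point is that $\dim(V)$ is a large power of $q$ while the centralizers $C_G(\alpha_{(p)})$, for $\alpha_{(p)}$ a field or graph automorphism of prime order $p$, are fixed-point subgroups $\overline{G}^{\sigma'}$ of bounded rank, whose orders grow like a proper power of $q$ of strictly smaller exponent (e.g. a field automorphism of order $p$ has centralizer a group of the same type over $\F_{q^{1/p}}$). Comparing exponents of $q$ on the two sides gives the inequality for all but finitely many $q$; the finitely many remaining $(G,q)$ would be checked directly. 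The hard part will be the genuinely small groups — $\PSL_2(q)$ for tiny $q$, where $\dim(V) = q$ is small and the outer automorphisms of even order (the diagonal involution when $q$ is odd) force a careful analysis — and the groups where no even-order $\alpha$ is available at all, since then $|\alpha|$ is odd and Lemma~\ref{LargeDegrees} does not apply. In the latter situation the escape is that if every $\alpha = \ad_g\circ\nu$ has odd order then, $\dim(V)$ being odd, the large-degree argument via Lemma~\ref{MainCriterion} must instead be invoked with an odd-order $\alpha$ using that $\Res^{G'}_{\langle\alpha\rangle}(\chi')$ contains the trivial character, which holds automatically once the trivial character appears with positive multiplicity — and for the Steinberg module the restriction to any cyclic subgroup is controlled by the explicit Steinberg character values (which are $\pm r^{a}$ or $0$), making the inner product with the trivial character computable. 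I expect the genuine obstacle to be these low-rank exceptional cases and the smallest $\PSL_2(q)$, which will require bespoke character-value estimates rather than the uniform bound.
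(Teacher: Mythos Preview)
Your plan is on the right track for $G_2(3^f)$, ${}^2G_2(3^{2m+1})$, $\PSU_3(q)$ and one half of the $\PSL_2(q)$ analysis: these are indeed handled in the paper via the large-degree criterion, essentially as you outline. But there are several genuine problems.

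First, a factual error: for $G = G_2(3^f)$ the outer automorphism group is \emph{not} cyclic of order~$f$. Since the characteristic is~$3$, the exceptional graph automorphism of the $G_2$ Dynkin diagram is present, and $\Gamma_G\Phi_G$ is cyclic of order~$2f$; see Subsection~\ref{AutomorphismsI} and \cite[Theorem~2.5.12(e)]{GLS}. This does not wreck the large-degree estimate (the paper carries it through with this correction), but it does mean your structural analysis of $\Out(G)$ is wrong as stated.

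Second, your fallback for the situation ``every $\alpha = \ad_g\circ\nu$ has odd order'' is not coherent. Lemma~\ref{MainCriterion} explicitly requires~$\alpha$ to have even order, because in Case~2 of Subsection~\ref{PreliminaryConsiderations} one needs the \emph{non-trivial} real character of $\langle\alpha\rangle$ to occur, and an odd-order cyclic group has no such character. The paper's remedy is different: in Case~2 one has $|A_1/G|$ odd by Lemma~\ref{CaseDistinction}, so~$\nu$ has odd order; one then picks an involution $g\in C_G(\nu)$ (whose existence is verified case by case) so that $\alpha = \ad_g\circ\nu$ has even order $2|\nu|$, and only then applies Lemma~\ref{LargeDegrees}. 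You never produce such a~$g$.

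Third, and most seriously, for $\PSL_2(q)$ in Case~1 (when $-\id_V\notin A$, which is the generic situation) the paper does \emph{not} use the large-degree method at all, and your ``bespoke character-value estimates'' do not capture what is actually needed. The Steinberg character values you have in mind are for elements of~$G$; on outer elements of~$A$ the extended character~$\chi'$ is not given by that formula. Instead the paper exploits the permutation-module description $\Ind_{\langle B,n\rangle}^A(1_{\langle B,n\rangle}) = 1_A + \St_A$, analyses $\Res^A_{\langle n\rangle}(\chi')$ via Mackey's theorem, and reduces the question to finding a regular $\langle\nu\rangle$-orbit on the coset space $\mathcal{R}_q\cong\mathbb{F}_q$, which is settled using a primitive prime divisor of $q-1$ (with a separate short argument for Mersenne primes with $f\le 2$). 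This is a qualitatively different idea from bounding centralisers.

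Finally, for $\PSL_3(q)$ the paper again avoids the large-degree route: it restricts to an $A_1$-Levi subgroup~$L$ (conjugate to its image under any automorphism), checks that $\langle\Res^G_L(\St_G),\St_L\rangle$ is odd, and then invokes the already-proved $\PSL_2$ case via Lemma~\ref{DirectSums} and Corollary~\ref{NormalSubgroups}. Your proposal does not mention this reduction, and it is not clear that a direct large-degree argument would go through uniformly for all odd~$q$ without it.
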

\begin{proof}
For some of the proofs below we rely on Lemma~\ref{LargeDegrees}. For this, we 
need to estimate the order of subgroups $C_G( \beta )$ for certain automorphisms 
$\beta \in \Aut(G)$, for which we cite \cite[Propositions~$4.9.1$,~$4.9.2$]{GLS}. 
However, \cite[Propositions~$4.9.1$]{GLS} only gives $O^{r'}( C_G( \beta ) )$.
Nevertheless, in combination with the tables of maximal subgroups determined 
in~\cite{BHRD}, we obtain the desired bounds.

Let us begin with $G = \PSL_2(q)$, with $q = r^f > 3$ odd. The elements of~$G$
are written as
$$\left[ \begin{array}{cc} a & b \\ c & d \end{array} \right],$$
where the square brackets indicate the image in $\PSL_2(q)$ of the
matrix 
$$\left( \begin{array}{cc} a & b \\ c & d \end{array}\right) \in \SL_2(q).$$
Let us define two automorphisms of~$G$. First, choose $\zeta \in \mathbb{F}_q$
of order $(q-1)_2$. Let $\delta$ denote conjugation by the diagonal
matrix $$\left[ \begin{array}{cc} \zeta & 0 \\ 0 & 1 \end{array}\right],$$
i.e.\ 
$$\delta\left( \left[ \begin{array}{cc} a & b \\ c & d \end{array} \right] \right) = 
\left[ \begin{array}{cc} a & \zeta b \\ \zeta^{-1} c & d \end{array} \right].$$
Next, $\varphi := \varphi_r$ denotes the standard Frobenius morphism of~$G$ of 
order~$f$, i.e.\ 
$$\varphi\left( \left[ \begin{array}{cc} a & b \\ c & d \end{array} \right] \right) =
\left[ \begin{array}{cc} a^r & b^r \\ c^r & d^r \end{array} \right].$$
Then $\varphi \circ \delta = \delta^r \circ \varphi$.
By the results reported in Subsection~\ref{AutomorphismsI}, every automorphism 
of~$G$ is of the form $\ad_g \circ \delta^k \circ \varphi^l$, for some $g \in G$ 
and some integers~$k$ and~$l$. 

Let~$B$ and~$T$ denote the images of the subgroups of upper triangular matrices,
respectively diagonal matrices, of $\SL_2(q)$ in $\PSL_2(q) = G$. We take~$B$ 
and~$T$ as our standard Borel subgroup, respectively standard maximal torus.
Then~$B$ and~$T$ are invariant under~$\delta$ and~$\varphi$. A set of 
representatives for the left cosets of~$B$ in~$G$ is given by
$\mathcal{R} = \mathcal{R}_q \cup \mathcal{R}_{\infty}$ with
$$\mathcal{R}_q := \left\{ \left[ \begin{array}{cc} 1 & 0 \\ y & 1 \end{array}\right] 
\mid y \in \mathbb{F}_q \right\}$$
and
$$\mathcal{R}_{\infty} := 
\left\{ \left[ \begin{array}{cc} 0 & -1 \\ 1 & 0 \end{array} \right] \right\}.$$
(These elements may be identified with the projective line over $\mathbb{F}_q$.)

By replacing~$n$ with a suitable element of its coset $Gn$, we may assume that
$\nu = \delta^k \circ \varphi^l$ for some integers~$k$ and~$l$. Then~$\nu$ 
stabilizes~$B$ and permutes the elements of~$\mathcal{R}_q$.
Now identify~$G$ with the image of~$\rho$ in~$\GL(V)$ and adopt the notation 
introduced in Subsection~\ref{PreliminaryConsiderations}. In 
particular, $A = \langle G, n \rangle$; see Definition~\ref{DefineA}.

Suppose now that we are in Case~$1$ of 
Subsection~\ref{PreliminaryConsiderations}. By Lemma~\ref{GPrimeAndA}, there is
an isomorphism $A \rightarrow G' = \langle \Inn(G), \nu \rangle$, sending~$n$
to~$\nu$. In particular, $\langle n \rangle \cong \langle \nu \rangle$.
As~$B$ is $\nu$-invariant, $\langle B, n \rangle$ is an $\ad_n$-invariant 
subgroup of~$A$. Since~$B$ 
is a maximal subgroup of~$G$, we have $\langle B, n \rangle \cap G = B$. 
Thus~$\mathcal{R}$ is a set 
of representatives for the left cosets of~$\langle B, n \rangle$ in~$A$. 
Notice that
$$nx\langle B, n \rangle = nx\langle B, n \rangle n^{-1} = 
nxn^{-1}\langle B, n \rangle$$
for $x \in \mathcal{R}$. As $nxn^{-1} = \nu(x)$ for $x \in G$, left 
multiplication by~$n$ fixes the coset with representative in 
$\mathcal{R}_{\infty}$, and the action of $\langle n \rangle$ by left 
multiplication on the set of $\langle B, n \rangle$-cosets corresponding 
to~$\mathcal{R}_q$ is equivalent to the action of~$\langle \nu \rangle$ 
on~$\mathcal{R}_q$. Thus, left multiplication by~$n$ also fixes the coset 
corresponding to the trivial element in~$\mathcal{R}_q$.

Consider the character 
$\psi := \Ind_{\langle B, n \rangle}^{A}( 1_{\langle B, n \rangle} )$. 
Thus~$\psi$ is the permutation character of~$A$ acting by left multiplication on 
the cosets of $\langle B, n \rangle$. By Mackey's theorem, 
$\Res_G^{A}( \psi ) = \Ind_B^G( 1_B ) = 1_G + \St_G$, where $\St_G$ denotes
the character of the Steinberg module of~$G$. Hence $\psi = 1_{A} +
\St_{A}$, where $\St_{A}$ is a rational valued extension of $\St_G$ to~$A$. 
As left multiplication by~$n$ fixes two $\langle B, n \rangle$-cosets, 
$\psi(n) \geq 2$ and so $\St_{A}(n)$ is a 
positive integer. The other extensions of~$\St_G$ to~$A$ are of the form
$\tilde{\lambda} \cdot \St_{A}$, where $\tilde{\lambda}$ is the inflation of 
some $\lambda \in \Irr( A/G )$ to~$A$. As~$A/G$ is cyclic, the only
real extensions of~$\St_G$ to~$A$ are of the form 
$\tilde{\lambda} \cdot \St_{A}$, where $\lambda \in \Irr(A/G)$ with 
$\lambda^2 = 1$. Viewing $\tilde{\lambda}$ as a character 
of~$\langle B, n \rangle$ by 
restriction, we get $\Ind_{\langle B, n \rangle}^{A}( \tilde{\lambda} ) =
\tilde{\lambda} + \tilde{\lambda} \cdot \St_{A}$.

Let $\lambda \in \Irr(A/G)$ with $\lambda^2 = 1$ such that 
$\chi' := \tilde{\lambda} \cdot \St_{A}$ is the character of~$A$ afforded 
by~$V$, where, again,~$\tilde{\lambda}$ denotes the inflation of~$\lambda$ to 
a character of~$A$. To show that~$n$ is represented by a matrix with 
eigenvalue~$1$, we have 
to show that $\Res^{A}_{\langle n \rangle}( \chi' )$ contains a trivial 
constituent.  Now, once more by Mackey's theorem, we have
\begin{equation}
\label{Mackey}
\Res_{\langle n \rangle}^{A}( \chi' ) = 
- \Res^{A}_{\langle n \rangle}( \tilde{\lambda} ) 
+ \sum_{z \in \mathcal{R}'} 
\Ind_{\langle n \rangle_z}^{\langle n \rangle}( \tilde{\lambda}_z ),
\end{equation}
where $\mathcal{R'} \subseteq \mathcal{R}$ is a set of representatives for the 
$\langle n \rangle$-$\langle B, n \rangle$-double cosets of~$A$, 
$\langle n \rangle_z := {^z\!\langle B, n \rangle} \cap \langle n \rangle$, and 
$\tilde{\lambda}_z := 
\Res_{\langle n \rangle_z}^{{^z\!\langle B, n \rangle}}( {^z\!\tilde{\lambda}} )$. 
If~$\tilde{\lambda}$ is the trivial character, every summand 
$\Ind_{\langle n \rangle_z}^{\langle n \rangle}( \tilde{\lambda}_z )$ 
of~(\ref{Mackey}) contains a trivial constituent. In this case 
$\Res_{\langle n \rangle}^{A}( \chi' )$ contains the trivial character, as 
$\langle n \rangle \langle B, n \rangle = \langle B, n \rangle \lneq A$, and thus
$|\mathcal{R}'| \geq 2$. Suppose that $\tilde{\lambda} \neq 1_{A}$. Then
$\Res^{A}_{\langle n \rangle}( \tilde{\lambda} ) \neq 1_{\langle n \rangle}$, as 
$\tilde{\lambda}(n) = -1$. Thus, in order to show that 
$\Res_{\langle n \rangle}^{A}( \chi' )$ contains a trivial character,
it suffices to show that there is $z \in \mathcal{R}'$ such that 
$\langle n \rangle_z$ is the trivial group. 

Observe that~$\langle n \rangle_z$ is the stabilizer in~$\langle n \rangle$ of 
the coset $z\langle B, n \rangle$. Thus $\langle n \rangle_z$ is trivial, if and 
only if~$z\langle B, n \rangle$ lies in a regular $\langle n \rangle$-orbit. 
This can only be the case if $z \in \mathcal{R}_q$, and we have to show that
$\mathcal{R}_q$ contains a regular $\langle \nu \rangle$-orbit. 
The non-trivial elements of~$\langle \nu \rangle$ are of the form 
$\delta^s \circ \varphi^t$ for integers~$s$ and~$t$ such that $\delta^s \neq 1$ 
or $\varphi^t \neq 1$. For $y \in \mathbb{F}_q$, let
$$z(y) := \left[ \begin{array}{cc} 1 & 0 \\ y & 1 \end{array}\right].$$ 
Then $\delta^s \circ \varphi^t(z(y)) = z(y')$ with
$y' = \zeta^{-s} y^{r^t}$. Thus $\delta^s \circ \varphi^t$ fixes $z(y)$
if and only if $y^{r^t-1} = \zeta^s$. If~$r$ is not a Mersenne prime or if 
$f > 2$, choose a primitive prime divisor $\ell$ of $q - 1 = r^f - 1$, i.e.\ 
$\ell \mid r^f - 1$ and $\ell \nmid r^e - 1$ for all $1 \leq e < f$; see
\cite[Theorem~IX.$8.3$]{HuBII}. Let $y \in \mathbb{F}_q^*$ denote an element of 
order~$\ell$. Then $y^{r^t - 1} \neq 1$, unless $t = f$. As $\zeta$ has even 
order, it follows that~$z(y)$ is only fixed by the trivial
element of~$\langle \nu \rangle$.

It remains to consider the case when~$r$ is a Mersenne prime and $f \leq 2$. 
Then $\Out(G)$ is an elementary abelian $2$-group of order~$4$, 
respectively~$2$. In particular, $\nu^2 \in \Inn(G)$. As we are in Case~$1$, 
this implies that $n^2 \in G$. As~$G$ is in the kernel of $\tilde{\lambda}$ by 
definition,~$n^2$ is in the kernel of ${^z\!\tilde{\lambda}}$ for all 
$z \in \mathcal{R'}$. Since~$\langle n \rangle \cong \langle \nu \rangle$ is a 
cyclic $2$-group, every 
proper subgroup of~$\langle n \rangle$ is contained in $\langle n^2 \rangle$. 
Thus if $\langle n \rangle_z \lneq \langle n \rangle$ for some 
$z \in \mathcal{R}'$, the corresponding summand of~(\ref{Mackey}) contains a 
trivial constituent. Since~$\langle n \rangle$ is not a normal subgroup of~$A$, 
it does not act trivially on the set of left cosets of~$\langle B, n \rangle$ 
in~$A$. Thus there is $z \in \mathcal{R}$ such that $z\langle B, n \rangle$ is 
not fixed by~$\langle n \rangle$. We choose~$\mathcal{R}'$ such that
$z \in \mathcal{R}'$. Then $\langle n \rangle_z \lneq \langle n \rangle$ and we 
are done.

Now assume that we are in Case~$2$ of 
Subsection~\ref{PreliminaryConsiderations}. With the notation introduced there,
we have $n = - n_1$ for some $n_1 \in A_1$. By Lemma~\ref{CaseDistinction} we may 
assume that~$|n_1|$ is odd. Lemma~\ref{GPrimeAndA} implies 
that~$|\nu|$ is odd, and so~$\nu$ is a field automorphism of~$G$.
More precisely, consider $K := \langle \delta, \varphi \rangle \leq \Aut(G)$.
This is a semidirect product 
$\langle \delta \rangle \rtimes \Phi_G$ with $\Phi_G = \langle \varphi \rangle$, 
and~$\Phi_G$ contains a $2'$-Hall subgroup of~$K$. Hence every element of odd 
order of~$K$ is conjugate in~$K$ to an element 
of~$\Phi_G$. In particular, $C_G( \nu' )$ for $\nu' \in \langle \nu \rangle$ is 
isomorphic to $C_G( \varphi' )$ for some $\varphi' \in \Phi_G$. Thus
$C_G( \nu )$ contains a subgroup isomorphic to $\PSL_2(p)$.

Let~$g$ be an involution in $C_G( \nu )$,
and put $\alpha := \ad_g \circ \nu$. Now make use of Lemma~\ref{LargeDegrees}.
If~$p$ is an odd prime diving $|\alpha|$, then $C_G( \alpha_{(p)} )$ is
isomorphic to $\PSL_2( q_0 )$, where $q_0^{p} = q$; see 
\cite[Proposition~$4.9.1$]{GLS} and \cite[Table~$8.1$]{BHRD}. In particular, 
$|C_G( \alpha_{(p)} )| \leq q_0(q_0^2 - 1) \leq q$, as~$p$ is odd. If $p = 2$, 
then $|C_G( \alpha_{(p)} )| = |C_G( g )| \leq q + 1$. If $f = 1$, then~$\nu$ is 
trivial and we are done with Corollary~\ref{MainCriterionCor}.
We will thus assume that $f > 1$, in which case $f \geq 3$, as
$|\nu| = |n_1|$ is odd. This implies that $\dim(V) = q > (2f - 1)(q + 1)^{1/2}$,
hence $(G,V,n)$ has the $E1$-property by Lemma~\ref{LargeDegrees}, as 
$|\alpha| \leq 2f$.

Let us now consider the case of $G = \PSL_3(q)$. The two standard Levi subgroups 
of~$G$ of type~$A_1$ are conjugate in~$G$. (This is not true for the
corresponding standard parabolic subgroups.) Let~$L$ be one of these. As~$\nu(L)$
is $G$-conjugate to a standard Levi subgroup, there is $g \in G$ such that
$\ad_g \circ \nu$ fixes~$L$. Again, identify~$G$ with its image $\rho(G)$ in
$\GL(V)$. Replacing~$n$ by~$gn$, we may assume that~$n$ normalizes~$L$.
Let~$\St_G$ and~$\St_L$ denote the Steinberg characters of~$G$ and~$L$, 
respectively. Since~$\St_L$ is invariant under every automorphism of~$L$ by 
\cite[Theorem~$2.5$]{MalleExt}, the homogeneous component~$V_1$ of 
$\Res^G_L( V )$ corresponding to~$\St_L$ is $n$-invariant. To show that $(G,V)$ 
has the $E1$-property, it suffices to show that $(L,V_1)$ has this property; see 
Lemma~\ref{InvariantSubgroup}. Using~\cite{SiFra} and~\cite{SteinGL} or 
\cite[Theorem~$6.5.9$]{C2}, one checks that
$$
\langle \Res^G_L(\St_G), \St_L \rangle = 
\begin{cases} 3, & \text{if\ } 3 \nmid q - 1 \\
              5, & \text{if\ } 3 \mid q - 1
\end{cases}
$$
By Lemma~\ref{DirectSums}, it suffices to show that a module affording~$\St_L$ 
has the $E1$-property. Now~$\St_L$ restricts to the Steinberg 
character~$\St_{L'}$ of $L' := [L,L] \cong \SL_2(q)$. By what we have already 
shown, a module affording~$\St_{L'}$ has the $E1$-property. Thus a module
affording~$\St_L$ has the $E1$-property by Corollary~\ref{NormalSubgroups}. This 
completes the proof in case $G = \PSL_3(q)$.

We now consider the case $G = \PSU_3( q )$, where, once more, we are going to 
apply Lemma~\ref{LargeDegrees}. Here, $\Aut(G) = \Inndiag(G) \rtimes \Phi_G$
with $\Inndiag(G) \cong \PGU_3(q)$ and~$\Phi_G$ cyclic of order~$2f$; see
Subsection~\ref{AutomorphismsI}. We assume that a generator~$\varphi$ 
of~$\Phi_G$ is the image of the standard Frobenius morphism of~$\SU_3(q)$ which
raises every entry of an element of~$\SU_3(q)$ to its $r$th power.
By pre-multiplying~$n$ with a suitable element 
of~$G$, we may assume that $\nu = \ad_t \circ \mu$ with $\mu \in \Phi_G$, 
and where $t \in \PGU_3(q)$ is represented by the matrix 
$\hat{t} = \diag(1,\zeta,1) \in \GU_3(q)$ with $\zeta \in \mathbb{F}_{q^2}^*$ of 
order dividing $(q+1)_3$. Notice that~$t$ is inverted by~$\varphi^f$.
If~$\nu$ has even order, put $g := 1$. If~$\nu$ has odd order, let $g \in G$
denote the image of $\diag(-1,1,-1) \in \SU_3(q)$ in~$G$. 
Let $\alpha := \ad_g \circ \nu$. Then~$\alpha$ has even order. 

Suppose first that $f \geq 2$. It follows from 
\cite[Propositions $4.9.1$, $4.9.2$]{GLS} in conjunction with
\cite[Tables~$8.5$,~$8.6$]{BHRD} that
$|C_G( \alpha_p )| \leq |\GU_2(q)| \leq q^4 + q^3$ for all primes~$p$ 
dividing~$|\alpha|$.  Also, $|\alpha| \leq 2f(q+1)_3$, as $\nu$ centralizes~$g$ 
and~$\langle \mu \rangle$ normalizes~$\langle t \rangle$. As~$r$ is odd and 
$f \geq 2$, there exists a primitive prime divisor~$\ell$ of 
$r^{2f} - 1$; see \cite[Theorem IX.$8.3$]{HuBII}. That is,~$\ell$ is a prime
with $\ell \mid r^{2f} - 1$ but $\ell \nmid r^j - 1$ for all $1 \leq j < 2f$. 
In particular, $\ell \mid q + 1$, and $2f \mid \ell - 1$. The latter implies 
that $\ell > 3$. As~$q$ is odd, $(q+1)_3 \mid (q+1)/(2\ell)$, and so 
$|\alpha| \leq 2f(q+1)_3 < (q+1)/2$. We conclude that
$$(|\alpha| - 1)^2(q^4 + q^3) < \frac{(q-1)^2}{4}(q^4 + q^3) =
\frac{1}{4}(q^6-q^5-q^4+q^3) < q^6.$$
Taking square roots we obtain
$$\dim(V) = q^3 > (|\alpha| - 1)|C_G( \alpha_p )^{1/2}$$
for all primes~$p$ dividing~$|\alpha|$. Lemma~\ref{LargeDegrees} implies
that~$(G,V,n)$ has the $E1$-property. Suppose now that $f = 1$.
Then $\Phi_G = \langle \varphi \rangle$ with $|\varphi| = 2$. If 
$t = 1$ or if $t \neq 1$ and $\nu = \ad_t \circ \varphi$, then 
$|\alpha| = 2$, hence $(G,V,n)$ has the $E1$-property by 
Corollary~\ref{MainCriterionCor}. It remains to consider the case
$t \neq 1$ and $\mu = \id_G$. Then $G' = \langle \Inn(G), \nu \rangle 
\cong \PGU_3( q )$, and $\alpha = \ad_{gt}$.
Let~$\chi'$ denote the character
of~$G'$ afforded by~$V$ according to Remark~\ref{VAsGPrimeModule}. For 
simplicity, identify~$G'$ with~$\PGU_3( q )$ and $\alpha$ with~$gt$. 
Then~$\chi'$ is the Steinberg character of~$\PGU_3(q)$, as the latter is the
only real extension of the Steinberg character of~$G$. Observe that 
$C_{G'}( y )$ is isomorphic to $\GU_2(q)$ for every non-trivial 
$y \in \langle gt \rangle$. Hence $\chi'( y ) = - q$ for all such~$y$; see
\cite[Theorem~$6.5.9$]{C2}. As $\chi'(1) = q^3$, this easily implies that 
$\Res^{G'}_{\langle gt \rangle}( \chi' )$ contains each of the two
real irreducible characters of $\langle gt \rangle$ as constituents.
We conclude from Lemma~\ref{MainCriterion} that $(G,V,n)$ has the 
$E1$-property.

We now consider the case of $G = G_2( q )$ with~$q$ a power of~$3$. Here, 
$\Aut(G) = \Inn(G) \rtimes \Gamma_G\Phi_G$, where $\Gamma_G\Phi_G$ is cyclic of 
order~$2f$; see \cite[Theorem~$2.5.12$(a),(e)]{GLS}. We may assume that 
$\nu \in \Gamma_G\Phi_G$. In particular,~$\nu$ is a field or a graph-field 
automorphism of~$G$ in the notation of \cite[Definition~$2.5.13$]{GLS}. Thus
$C_G( \nu ) \cong G_2( 3^{f'} )$ for some $f' \mid f$ or 
$C_G( \nu ) \cong {^2\!G}_2( 3^{2m+1} )$ for some positive integer~$m$. 
In particular,~$C_G( \nu )$ contains an involution. If~$\nu$ 
has even order, let $g := 1$, and if $\nu$ has odd order, let~$g$ denote an 
involution in~$C_G( \nu )$. Put $\alpha := \ad_g \circ \nu$. Then~$|\alpha|$ is 
even, and if~$p$ is a prime dividing~$|\alpha|$, then 
$\alpha_{(p)} \in \Gamma_G\Phi_G$, or $|g| = 2 = p$ and $\alpha_{(p)} = \ad_g$. 
If $\alpha_{(p)} \in \Gamma_G\Phi_G$, then 
$|C_G( \alpha_{(p)} )| \leq |G_2( q_0 )|$ with $q_0^p = q$ or
$|C_G( \alpha_{(p)} )| \leq |{^2\!G}_2( q )|$; see 
\cite[Proposition~$4.9.1$(a)]{GLS} and \cite[Table~$8.42$]{BHRD}. In this case, 
$|C_G( \alpha_{(p)} )| \leq q^7$. If $g \neq 1$, then  
$|C_G( \alpha_{(2)} )| \leq |\SL_2(q)|^2 \leq q^6$; see~\cite{LL}. Hence 
$$(|\alpha| - 1)|C_G( \alpha_{(p)} )|^{1/2} < q^6 = \dim(V)$$
for all primes~$p$ dividing~$|\alpha|$. Lemma~\ref{LargeDegrees} implies our 
claim.

Finally, we deal with the case $G = {^2G}_2( q )$ with $q = 3^{2m+1}$ for some 
$m \geq 1$.
Here, $\Aut(G)$ is a split extension of~$G$ with the group~$\Phi_G$ of field 
automorphisms, the latter being cyclic of order $2m+1$; see 
\cite[Theorem~$2.5.12$]{GLS}. We may thus assume that $\nu \in \Phi_G$, so 
that~$|\nu|$ is odd. Then $C_G( \nu )$ is isomorphic to ${^2G}_2( q_0 )$ for 
some root~$q_0$ of~$q$; see \cite[Proposition~$4.9.1$(a)]{GLS} and 
\cite[Table~$8.43$]{BHRD}. In particular,~$\nu$ centralizes some involution 
$g \in G$. Put 
$\alpha := \ad_g \circ \nu \in \Phi_G$.  Then $\alpha$ has even order 
dividing~$2(2m+1)$. Let~$p$ be an odd prime dividing~$|\alpha|$. Then
$C_G( \alpha_{(p)} ) \cong {^2G}_2( q_0 )$ with $q = q_0^{p}$. Also,  
$|C_G( \alpha_{(2)} )| = q(q^2-1)$; see \cite[p.~$62, 63$]{WardRee}. Hence
$|C_G( \alpha_{(2)} )| = q_0^p(q_0^{2p} - 1) \geq q_0^3(q_0^6-1) \geq 
q_0^3(q_0-1)(q_0^3+1) = |C_G( \alpha_{(p)} )|$. Now $\dim(V) = q^3 >
q[q(q^2-1)]^{1/2} > (4m + 1)[q(q^2-1)]^{1/2} \geq 
(|\alpha| - 1)|C_G( \alpha_{(p)} )|^{1/2}$
for all primes~${p}$ dividing~$|\alpha|$. The claim follows from 
Lemma~\ref{LargeDegrees}.
\end{proof}

\subsection{Reductions} We now work towards the main reductions in 
the present case.

\begin{lem}
\label{OddCharacteristicReductionI}
Let $P \leq G$ be a parabolic
subgroup of~$G$ with Levi decomposition $P = UL$, where~$U$ denotes the
unipotent radical of~$P$. Let~$S$ be an irreducible constituent of
$\Res^G_P( V )$ of odd dimension. Then~$U$ acts trivially on~$S$, i.e.\ $S$ is
a constituent of ${^*\!R}^G_L( V )$.
\end{lem}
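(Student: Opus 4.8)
The plan is to recognize that this statement is essentially immediate from Lemma~\ref{OddDegreeClifford}, once the relevant normal subgroup of odd order is identified. The decisive observation is that, because we are in the case of odd characteristic~$r$ (recall $q = r^f$ is odd throughout this section), the unipotent radical~$U$ of the parabolic subgroup~$P$ is an $r$-group, so that $|U|$ is a power of an odd prime and in particular odd. Furthermore, $U \unlhd P$ by the definition of the unipotent radical.

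First I would record that~$S$ is an irreducible $\mathbb{R}P$-module of odd dimension, and apply Lemma~\ref{OddDegreeClifford} with the pair $(G,H)$ appearing there replaced by $(P,U)$. Since $|U|$ is odd, that lemma yields directly that~$U$ acts trivially on~$S$. This is the entire substance of the argument; I do not expect any genuine obstacle, as it is a one-line invocation of the earlier result, whose proof in turn rests on the fact that an odd-order group possesses no non-trivial real absolutely irreducible module together with the absolute irreducibility of odd-dimensional real irreducibles (Lemma~\ref{AbsolutelyIrreducible}).

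It then remains only to rephrase this in the language of Harish-Chandra restriction. Since $\mathbb{R}$ has characteristic zero, Maschke's theorem guarantees that $\mathbb{R}P$ is semisimple, so $\Res^G_P( V )$ is a semisimple $\mathbb{R}P$-module and its irreducible constituents are precisely its irreducible direct summands. The space $V^U$ of $U$-fixed points is a $P$-submodule whose irreducible summands, viewed as $L$-modules via $L \cong P/U$, are exactly the constituents of ${^*\!R}^G_L( V )$ by definition of Harish-Chandra restriction. As~$U$ acts trivially on~$S$, the module~$S$ is contained in~$V^U$; hence, regarded as an $L$-module, it occurs as a constituent of ${^*\!R}^G_L( V )$, which is the asserted reformulation.
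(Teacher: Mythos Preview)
Your proof is correct and follows essentially the same route as the paper's: both invoke Lemma~\ref{OddDegreeClifford} applied to the normal subgroup $U \unlhd P$ of odd order acting on the odd-dimensional irreducible $\mathbb{R}P$-module~$S$. The paper's proof is terser (two lines, citing Lemma~\ref{AbsolutelyIrreducible} and Lemma~\ref{OddDegreeClifford}), while you spell out why $|U|$ is odd and unpack the Harish-Chandra restriction reformulation, but the substance is identical.
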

\begin{proof}
As~$S$ is absolutely irreducible by Lemma~\ref{AbsolutelyIrreducible},  the
claim follows from Lemma~\ref{OddDegreeClifford}.
\end{proof}

\begin{prp}
\label{PrincipalSeriesReduction}
Let $B = UT$ be the Borel subgroup of~$G$. Then there is an irreducible
$\mathbb{R}T$-module~$S$ of dimension~$1$ such that~$V$ occurs in $R_T^G( S )$
with odd multiplicity. If~$\lambda$ denotes the character of~$S$, then 
$\lambda^2 = 1_T$ in the character group of~$T$.
\end{prp}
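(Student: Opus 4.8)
The plan is to find $S$ inside the restriction $\Res^G_B(V)$ and then transport the resulting multiplicity statement across Harish-Chandra induction by Frobenius reciprocity. The key structural fact to exploit is that, since we are in odd characteristic, the unipotent radical $U$ of $B$ is an $r$-group of \emph{odd} order and satisfies $U \unlhd B$.

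First I would apply Lemma~\ref{RestrictionReduction} to the inclusion $B \leq G$. As $V$ is non-trivial, irreducible and of odd dimension, this produces an irreducible $\mathbb{R}B$-module $S$ of odd dimension occurring with odd multiplicity in $\Res^G_B(V)$; and since $U \unlhd B$ has odd order, the same lemma guarantees that $U$ acts trivially on $S$. Consequently $S$ is naturally an irreducible $\mathbb{R}T$-module via the isomorphism $B/U \cong T$. Because $T$ is abelian and $S$ is irreducible of odd dimension, Lemma~\ref{AbsolutelyIrreducible} shows that $S$ is absolutely irreducible, so $\mathbb{C}\otimes S$ is a one-dimensional $\mathbb{C}T$-module and $\dim_{\mathbb{R}}(S)=1$. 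This furnishes the one-dimensional module $S$, with linear character $\lambda$. The assertion $\lambda^2 = 1_T$ then comes essentially for free: a one-dimensional real representation of a finite group is a homomorphism into $\mathbb{R}^\times$ with finite image, hence takes values in $\{\pm 1\}$, so $\lambda$ is $\{\pm 1\}$-valued and $\lambda^2 = 1_T$.

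It remains to convert the odd multiplicity of $S$ in $\Res^G_B(V)$ into odd multiplicity of $V$ in $R^G_T(S)$. Here I would use that Harish-Chandra induction from the Levi $T$ of $B$ is ordinary induction of the inflation, namely $R^G_T(S) = \Ind_B^G(\Infl_T^B S)$, and that $\Infl_T^B S$ is simply $S$ regarded as a $B$-module on which $U$ acts trivially. Since $\mathbb{R}G$ is semisimple and $V$ is absolutely irreducible, the multiplicity of $V$ in $R^G_T(S)$ equals the inner product $\langle \chi, \chi_{R^G_T(S)} \rangle$ of the (real-valued) characters. By Frobenius reciprocity this equals $\langle \Res^G_B(\chi), \Infl_T^B \lambda \rangle$, which is exactly the multiplicity of $S$ in $\Res^G_B(V)$, and this is odd by the first step.

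All steps are routine; there is no genuine obstacle. The only points requiring care are the verification that $U$ acts trivially on the chosen constituent (so that it descends to a module for $T$), and the clean use of Frobenius reciprocity together with the absolute irreducibility of $V$ and the semisimplicity of $\mathbb{R}G$ to read off composition multiplicities as character inner products.
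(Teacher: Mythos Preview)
Your proof is correct and follows essentially the same approach as the paper: both pick an odd-dimensional irreducible constituent of $\Res^G_B(V)$ with odd multiplicity, use that the odd-order unipotent radical~$U$ acts trivially to view it as a $T$-module, deduce dimension~$1$ via absolute irreducibility, and then pass to Harish-Chandra induction by adjointness/Frobenius reciprocity. The only cosmetic difference is that you invoke Lemma~\ref{RestrictionReduction} directly, whereas the paper picks a homogeneous component and cites Lemma~\ref{OddCharacteristicReductionI}.
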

\begin{proof}
Let~$V_1$ denote a homogeneous component of $\Res^G_B( V )$ of odd dimension. 
Then~$U$ acts trivially on~$V_1$ by Lemma~\ref{OddCharacteristicReductionI}, and 
thus~$V_1$ is a homogeneous component of ${^*\!R}^G_T( V )$.
Let $S \leq V_1$ denote an irreducible $\mathbb{R}B$-submodule. Then~$S$ is also
irreducible as $\mathbb{R}T$-module. 
By adjointness of Harish-Chandra induction and Harish-Chan\-dra restriction,~$V$
is a constituent of $R_T^G(S) = \Ind_B^G( S )$ with odd multiplicity.

As $\dim_{\mathbb{R}}( S )$ is odd,~$S$ is an absolutely irreducible 
$\mathbb{R}T$-module by Lemma~\ref{AbsolutelyIrreducible}, so that 
$\dim_{\mathbb{R}}( S ) = \lambda(1) = 1$. The fact that~$\lambda$ is real 
implies that $\lambda^2 = 1_T$. This completes the proof.
\end{proof}

If~$\lambda = 1_T$ in the notation of
Proposition~\ref{PrincipalSeriesReduction}, then~$V$ is a principal series
$\mathbb{R}G$-module. In this case we get a further reduction.

\begin{lem}
\label{LeviSubgroupsReduction}
Suppose that~$V$ is a principal series $\mathbb{R}G$-module.
Let~$P$ denote a parabolic subgroup of~$G$ with Levi decomposition $P = UL$.
Suppose that there is $x \in G$ such that $\nu(U) = x^{-1}Ux$ and
$\nu(L) = x^{-1}Lx$. Suppose also that the automorphisms of~$L$ fix the
principal series characters of~$L$ and that~$L$ has the $E1$-property.

Suppose that the multiplicity of~$V$ as a direct summand
in~$R_L^G( \mathbb{R} )$ is even (including multiplicity~$0$),
where~$\mathbb{R}$ is the trivial $\mathbb{R}L$-module. Then $(G,V,n)$ has
the $E1$-property.
\end{lem}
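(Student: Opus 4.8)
The plan is to reduce to the setting of Lemma~\ref{MainHCLemma} by extracting from~$V$ a suitable character~$\psi$ of~$L$; the decisive step is a parity count for $\dim V^U$, after which the conclusion follows from the Harish-Chandra machinery already in place.

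\textbf{Arranging $\nu$-stability.} First I would normalize the situation so that~$\nu$ stabilizes~$U$ and~$L$. Replacing~$n$ by~$\rho(x)n$ (an element of the finite group $A = \langle G, n \rangle$, hence again of finite order) changes the induced automorphism from~$\nu$ to $\ad_x \circ \nu$, and the hypotheses $\nu(U) = x^{-1}Ux$, $\nu(L) = x^{-1}Lx$ give $(\ad_x \circ \nu)(U) = U$ and $(\ad_x \circ \nu)(L) = L$. Moreover, if $\rho(g)\rho(x)n = \rho(gx)n$ has eigenvalue~$1$ for some $g \in G$, then $(G,V,n)$ has the $E1$-property with witness $gx \in G$. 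So after this replacement we may assume that $P = UL$ is $\nu$-stable with $\nu$-stable factors~$U$ and~$L$; identifying~$G$ with $\rho(G) \leq \GL(V)$ as in Subsection~\ref{PreliminaryConsiderations}, both~$P$ and~$L$ are $n$-invariant.

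\textbf{Passing to the Harish-Chandra restriction.} Next I would consider the $\mathbb{R}L$-module $W := {^*\!R}_L^G(V) = V^U$, whose character is ${^*\!R}_L^G(\chi)$. Since~$V$ is a principal series module, every irreducible constituent of~$W$ is a principal series module of~$L$, and hence is fixed by every automorphism of~$L$ by hypothesis; in particular each is $\nu$-invariant. The numerical heart of the argument is that $\dim W$ is \emph{odd}: as we are in odd characteristic, the unipotent radical~$U$ is an $r$-group of odd order, so every non-trivial irreducible $\mathbb{R}U$-module has even dimension (a group of odd order has no non-trivial real irreducible character). Hence $\dim V \equiv \dim V^U \pmod 2$, and $\dim V^U$ is odd. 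Finally, by the adjunction between~$R_L^G$ and~${^*\!R}_L^G$, the multiplicity of the trivial $\mathbb{R}L$-module in~$W$ equals $\langle R_L^G( \mathbb{R} ), \chi \rangle$, which is even by assumption.

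\textbf{Extracting $\psi$ and concluding.} Now decompose~$W$ into irreducible $\mathbb{R}L$-modules. Even-dimensional constituents contribute an even amount to $\dim W$, so the odd-dimensional constituents carry odd total multiplicity; since the trivial module occurs with even multiplicity, there is a \emph{non-trivial} odd-dimensional irreducible constituent~$S$ of~$W$ occurring with odd multiplicity. By Lemma~\ref{AbsolutelyIrreducible}, $S$ is absolutely irreducible, so its character $\psi \in \Irr(L)$ is real, of odd degree, non-trivial, and $\nu$-invariant, and $\langle R_L^G( \psi ), \chi \rangle = \langle \psi, {^*\!R}_L^G( \chi ) \rangle$ is odd. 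As~$L$ has the $E1$-property, Lemma~\ref{MainHCLemma} (applied to the proper parabolic~$P$; the case $P = G$ being trivial) yields the $E1$-property for $(G,V,n)$. The only non-formal ingredient is the parity identity $\dim V \equiv \dim V^U \pmod 2$ coming from the odd order of~$U$; the even-multiplicity hypothesis enters precisely to guarantee that the constituent~$S$ we extract is non-trivial, as Lemma~\ref{MainHCLemma} requires.
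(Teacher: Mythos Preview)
Your proof is correct and follows essentially the same approach as the paper: normalize so that~$\nu$ fixes~$P$ and~$L$, locate a non-trivial real irreducible constituent~$\psi$ of~${^*\!R}_L^G(\chi)$ of odd degree and odd multiplicity, observe that~$\psi$ is a principal series character of~$L$ and hence $\nu$-invariant, and invoke Lemma~\ref{MainHCLemma}. The only organizational difference is how the parity is extracted: the paper picks an odd-dimensional homogeneous component of~$\Res_P^G(V)$ and then cites Lemma~\ref{OddCharacteristicReductionI} to see that~$U$ acts trivially on it, whereas you argue directly that $\dim V^U \equiv \dim V \pmod 2$ from the fact that non-trivial irreducible $\mathbb{R}U$-modules have even dimension when~$|U|$ is odd---the same idea, slightly repackaged.
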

\begin{proof}
By replacing~$n$ with $\rho(x)n$, we may assume that~$\nu$ fixes~$U$ and~$L$.

Let~$V_1$ denote a homogeneous component of $\Res_P^G( V )$ of odd dimension.
Then~$U$ acts trivially on~$V_1$ by Lemma~\ref{OddCharacteristicReductionI}.
Let $S \leq V_1$ be an irreducible $\mathbb{R}P$-submodule of~$V_1$. Then~$S$ is
also irreducible as~$\mathbb{R}L$-module.
By adjointness of Harish-Chandra induction and Harish-Chan\-dra restriction,~$V$
is a constituent of $R_L^G(S) = \Ind_P^G( S )$ with odd multiplicity.
Let~$\chi$ and~$\psi$ denote the characters of~$V$, respectively~$S$, the latter
viewed as an $\mathbb{R}L$-module. Then~$\psi$ is real and
$\langle R_L^G(\psi),\chi \rangle$ is odd.

By hypothesis,~$\psi$ is not the trivial character of~$L$. As~$\chi$ is a
principal series character by hypothesis, so is~$\psi$ by the remarks in
Subsection~\ref{HarishChandraTheory}. In particular,~$\psi$ is $\nu$-invariant
by assumption. The assertion follows from Lemma~\ref{MainHCLemma}.
\end{proof}

\begin{cor}
\label{PSCor}
Suppose that~$V$ is a principal series module which corresponds to an 
irreducible character of $W(\overline{G}^{\sigma})$ of even degree. Then
$(G,V,n)$ has the $E1$-property.
\end{cor}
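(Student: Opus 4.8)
The plan is to apply Lemma~\ref{LeviSubgroupsReduction} in the simplest possible case, namely with $P = B = UT$ the standard Borel subgroup of~$G$ and $L = T$ its standard maximal torus. Write $\vartheta \in \Irr( W( \overline{G} )^{\sigma} )$ for the irreducible character labelling the principal series module~$V$. The conceptual heart of the matter is that, as recalled in Subsection~\ref{HarishChandraTheory}, the multiplicity of~$V$ as a composition factor of $R_T^G( \mathbb{R} )$ equals~$\vartheta(1)$; since we work in characteristic zero, $R_T^G(\mathbb{R}) = \Ind_B^G( 1_B )$ is a semisimple $\mathbb{R}G$-module, so this is also the multiplicity of~$V$ as a direct summand, and it is even by hypothesis. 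It then remains to verify the remaining hypotheses of Lemma~\ref{LeviSubgroupsReduction} for this choice of~$P$ and~$L$, after which the conclusion that $(G,V,n)$ has the $E1$-property is immediate.

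Two of those hypotheses are trivial here. Since~$T$ is its own maximal torus, the only principal series character of~$T$ in the narrow sense of Subsection~\ref{HarishChandraTheory} is the trivial character~$1_T$, which is fixed by every automorphism of~$T$; and~$T$, being abelian, has the $E1$-property by Corollary~\ref{SolvableGroups}. The one condition requiring an argument is the existence of $x \in G$ with $\nu(U) = x^{-1}Ux$ and $\nu(T) = x^{-1}Tx$, and this is the main technical (though entirely standard) step I would carry out. As~$U$ is a Sylow $r$-subgroup of~$G$ and $B = N_G(U)$, the image $\nu(B)$ is again a Borel subgroup of~$G$, hence $G$-conjugate to~$B$, say $\nu(B) = {}^{g}B$ for some $g \in G$; taking unipotent radicals gives $\nu(U) = {}^{g}U$. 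Now $\nu(T)$ and~${}^{g}T$ are two maximal tori of the Borel subgroup~$\nu(B)$, so they are conjugate by an element of its unipotent radical $\nu(U) = {}^{g}U$; writing this element as~${}^{g}u$ with $u \in U$ and putting $x := u^{-1}g^{-1}$, one checks that $x^{-1}Ux = {}^{gu}U = {}^{g}U = \nu(U)$ and $x^{-1}Tx = {}^{gu}T = \nu(T)$, as required. This completes the verification, and Lemma~\ref{LeviSubgroupsReduction} delivers the claim.

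I would add one clarifying remark, since it explains what the statement really says. The hypothesis of the corollary is in fact never satisfied: an odd-dimensional principal series module forces $\vartheta(1)$ to be \emph{odd}. Indeed, by Lemma~\ref{OddCharacteristicReductionI} every odd-dimensional irreducible $\mathbb{R}B$-constituent of $\Res_B^G( V )$ is trivial on~$U$, so the part of $\Res_B^G( V )$ on which~$U$ acts non-trivially has even dimension; on the other hand the $U$-fixed space $V^U$ is $1_T$-isotypic of dimension $\vartheta(1)$. Hence $\dim(V) \equiv \vartheta(1) \pmod{2}$, and oddness of $\dim(V)$ yields oddness of $\vartheta(1)$. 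Thus Corollary~\ref{PSCor}, while a clean formal consequence of Lemma~\ref{LeviSubgroupsReduction}, is really the parity statement that odd-dimensional principal series modules are labelled by characters of $W(\overline{G})^{\sigma}$ of odd degree; the only place where any work is hidden is the conjugacy normalisation of~$\nu$ above.
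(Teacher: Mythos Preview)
Your proof is correct and follows the paper's approach exactly: apply Lemma~\ref{LeviSubgroupsReduction} with $P = B$ and $L = T$, using that the multiplicity of~$V$ in $R_T^G(\mathbb{R})$ equals~$\vartheta(1)$; the paper's own proof is the two-line version of this, relying on the ambient normalisation $\nu = \ad_t \circ \mu$ (so that~$B$ and~$T$ are already $\nu$-stable) rather than your explicit conjugacy argument. Your closing observation that the hypothesis is in fact vacuous---since $V = V^U \oplus V'$ with $V^U$ a sum of $\vartheta(1)$ copies of the trivial $\mathbb{R}T$-module and every irreducible $\mathbb{R}B$-summand of~$V'$ even-dimensional by Lemma~\ref{OddCharacteristicReductionI}, whence $\dim V \equiv \vartheta(1) \pmod 2$---is correct and goes beyond what the paper records; the corollary is nonetheless invoked later to dispose of cases that, by your remark, cannot actually arise.
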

\begin{proof}
If~$V$ corresponds to~$\vartheta \in \Irr( W(\overline{G}^{\sigma} ) )$, then
the multiplicity of~$V$ as a direct summand in $R_T^G( \mathbb{R} )$ 
equals~$\vartheta(1)$; see Subsection~\ref{HarishChandraTheory}. The claim
follows from Lemma~\ref{LeviSubgroupsReduction} applied to $P = B$ and $L = T$.
\end{proof}

\subsection{Non-principal series representations}
\label{NonPrincipalSeries}

Here, we consider the case that~$V$ is not in the principal series. In the
notation of Proposition~{\rm \ref{PrincipalSeriesReduction}}, which we keep
throughout this subsection, this means that $\lambda \neq 1_T$. Recall that
$\Aut(G) = \Inndiag(G) \rtimes (\Gamma_G\Phi_G)$, with 
$\Inndiag(G) = \{ \ad_h \mid h \in \overline{G}^{\sigma} \}$; see 
Subsection~\ref{AutomorphismsI}. Thus $\nu = \ad_h \circ \mu$ for some 
$h \in \overline{G}^{\sigma}$ and some $\mu \in \Gamma_G\Phi_G$.
Since $\overline{G}^{\sigma} = G\overline{T}^{\sigma}$ (see 
Subsection~\ref{BNPair}), there is $g \in G$ such that
$t := gh \in \overline{T}^{\sigma}$. By replacing~$n$ with~$\rho(g)n$, we may 
and will thus assume that $\nu = \ad_t \circ \mu$. In particular,~$\ad_t$
centralizes~$T$, and~$B$ and~$T$ are $\nu$-invariant.

\begin{lem}
\label{PoweringInvariance}
Let $\alpha \in \Aut(G)$ fix~$T$ and act on $T$ by powering its elements
(i.e.\ there is an integer~$m$ such that $\alpha(s) = s^m$ for $s \in T$).
Then~$\lambda$ is $\alpha$-invariant.
\end{lem}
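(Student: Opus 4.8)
The plan is to reduce the whole statement to the single arithmetic fact that $m$ is odd. Recall from Proposition~\ref{PrincipalSeriesReduction} that $\lambda$ is the character of a one-dimensional $\mathbb{R}T$-module with $\lambda^2 = 1_T$, so that $\lambda$ is a homomorphism $T \rightarrow \{ \pm 1 \}$ taking only the values $\pm 1$. Granting for the moment that $m$ is odd, I would simply compute, for $s \in T$,
\[
\lambda( \alpha(s) ) = \lambda( s^m ) = \lambda(s)^m = \lambda(s),
\]
the last equality holding because $\lambda(s) \in \{ \pm 1 \}$ and $m$ is odd. This gives $\lambda \circ \alpha = \lambda$, which is exactly the $\alpha$-invariance of~$\lambda$.

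It thus remains to show that $m$ is odd. If $\lambda = 1_T$ the invariance is trivial, so I may assume $\lambda \neq 1_T$, which is in any case the standing hypothesis of this subsection. Then there is $s_0 \in T$ with $\lambda(s_0) = -1$, so that $\lambda$ surjects onto $\{ \pm 1 \}$ and $\ker \lambda$ has index~$2$ in~$T$; in particular $|T|$ is even. On the other hand, since $\alpha \in \Aut(G)$ fixes~$T$ setwise, its restriction $\alpha|_T$ is an automorphism of the finite abelian group~$T$. By hypothesis this automorphism is the power map $s \mapsto s^m$, hence bijective, which forces $\gcd(m, |T|) = 1$. As $|T|$ is even, $m$ must be odd, as required.

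The only point demanding care is this last step, and it is where the hypotheses must be used precisely: the bijectivity of $\alpha|_T$ (coming from $\alpha(T) = T$) is what yields $\gcd(m, |T|) = 1$, and the evenness of $|T|$ (coming from the existence of the non-trivial quadratic character~$\lambda$) is what then forces $m$ odd. I do not expect a genuine obstacle, since both ingredients are elementary; the main thing to watch is that the argument breaks down exactly in the situation where $m$ is even, so the non-triviality of~$\lambda$ is essential to the conclusion and must not be dropped.
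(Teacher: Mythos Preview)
Your argument is correct. Both proofs are short and elementary, but they take genuinely different routes. The paper observes that the power map $s \mapsto s^m$ sends every subgroup of~$T$ into itself, hence (by bijectivity on the finite group~$T$) fixes every subgroup setwise; in particular it fixes $\Ker(\lambda)$, and since a character with $\lambda^2 = 1_T$ is uniquely determined by its kernel, $\lambda$ is $\alpha$-invariant. Your approach instead extracts the arithmetic consequence that~$m$ is odd (from $\gcd(m,|T|)=1$ together with $|T|$ even) and then computes $\lambda(s^m)=\lambda(s)^m=\lambda(s)$ directly on values. The paper's version is marginally slicker in that it does not need to single out the case $\lambda = 1_T$, and its subgroup-fixing observation is a statement about all characters of~$T$ at once; your version has the virtue of making the mechanism completely explicit at the level of the single character~$\lambda$.
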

\begin{proof}
Notice that~$\lambda$ is uniquely determined by $\Ker( \lambda )$, as
$\lambda^2 = 1_T$. By hypothesis,~$\alpha$ fixes every subgroup of~$T$ and
thus~$\alpha$ fixes~$\lambda$.
\end{proof}

\begin{lem}
\label{NonPrincipalSeriesCorI}
The triple $(G,V,n)$ has the $E1$-property under any of the following 
conditions.

{\rm (a)} Some $N$-conjugate of~$\lambda$ is $\nu$-invariant.

{\rm (b)} The torus~$T$ is cyclic.

{\rm (c)} The automorphism $\mu$ acts on~$T$ by powering its elements.
\end{lem}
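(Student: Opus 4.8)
The three conditions feed into a single mechanism. Since $B = UT$ is a proper parabolic subgroup whose $\nu$-stability, together with that of its Levi complement $T$, was arranged at the beginning of Subsection~\ref{NonPrincipalSeries}, and since $T$ is abelian, hence solvable, and so has the $E1$-property by Corollary~\ref{SolvableGroups}, the plan is to produce in each case a real, non-trivial, $\nu$-invariant linear character $\psi$ of $T$ for which $\chi$ occurs with odd multiplicity in $R_T^G(\psi)$, and then to invoke Lemma~\ref{MainHCLemma} with $P = B$ and $L = T$. A linear character is automatically of odd degree, so the degree hypothesis of Lemma~\ref{MainHCLemma} is free.

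I would treat (a) first, as (b) and (c) reduce to it. The key point is that Harish-Chandra induction from the torus is constant on $N$-orbits: for $w \in N$ and any linear character $\eta$ of $T$ one has $R_T^G(\eta) = R_T^G({}^w\eta)$ as characters of~$G$. This follows from the independence of $R_T^G$ from the Borel subgroup containing~$T$ recorded in Subsection~\ref{HarishChandraTheory}; concretely, conjugating $B$ by a representative $\dot{w} \in N \leq G$ carries the inflation of $\eta$ to the inflation of ${}^w\eta$ relative to the Borel ${}^{\dot{w}}B \supseteq T$, while $\Ind_B^G$ is a class function of~$G$ and hence invariant under $\dot{w}$-conjugation. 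Let $\lambda' := {}^w\lambda$ be the $\nu$-invariant $N$-conjugate provided by~(a). Then $\lambda'$ is linear, non-trivial (as $\lambda \neq 1_T$), and real with $(\lambda')^2 = 1_T$ (since conjugation is an automorphism of the character group and $\lambda^2 = 1_T$); and by the orbit-invariance together with Proposition~\ref{PrincipalSeriesReduction}, the character $\chi$ occurs in $R_T^G(\lambda')$ with odd multiplicity. Thus Lemma~\ref{MainHCLemma} applies with $\psi = \lambda'$, proving~(a).

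For (b) and (c) I would use the trivial conjugate $w = 1$ and show directly that $\lambda$ itself is $\nu$-invariant. In case~(b), $T$ is cyclic and $\lambda \neq 1_T$ satisfies $\lambda^2 = 1_T$, so $\Ker(\lambda)$ is the unique subgroup of index two in~$T$; being characteristic it is stabilized by~$\nu$ (which stabilizes~$T$), and since $\lambda$ is determined by its kernel, $\lambda$ is $\nu$-invariant. In case~(c), because $t \in \overline{T}^\sigma$ centralizes~$T$, the hypothesis $\mu(s) = s^m$ for $s \in T$ gives $\nu(s) = \ad_t(\mu(s)) = s^m$, so $\nu$ fixes~$T$ and acts on it by powering; Lemma~\ref{PoweringInvariance}, applied to $\alpha = \nu$, then yields that $\lambda$ is $\nu$-invariant. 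In both cases (a) completes the proof. The one genuinely technical step is the $N$-orbit invariance of $R_T^G$ used in~(a); everything else is bookkeeping with the normalizations already in force and the solvability of~$T$.
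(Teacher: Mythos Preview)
Your proof is correct and follows essentially the same route as the paper's: prove~(a) via the $N$-orbit invariance of $R_T^G$ together with Lemma~\ref{MainHCLemma} and the solvability of~$T$, and then reduce~(b) and~(c) to~(a) by showing that $\lambda$ itself is $\nu$-invariant. The only cosmetic differences are that the paper cites \cite[Proposition~$8.2.7$(ii)]{C2} for the orbit invariance where you sketch it from the Borel-independence of Harish-Chandra induction, and that you make explicit the step (implicit in the paper) that $\ad_t$ centralizes~$T$, so the hypothesis on~$\mu$ in~(c) transfers to~$\nu$ before Lemma~\ref{PoweringInvariance} is applied.
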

\begin{proof}
(a) Observe that~$\lambda$ is cuspidal. If $\lambda'$ is an $N$-conjugate 
of~$\lambda$, then $R_T^G( \lambda ) = R_T^G( \lambda' )$; see 
\cite[Proposition~$8.2.7$(ii)]{C2}. If, in addition,~$\lambda'$ is
$\nu$-invariant, $(G,V,n)$ has the $E1$-property by Lemma~\ref{MainHCLemma},
as~$T$ has the $E1$-property by Corollary~\ref{SolvableGroups}.

(b) Since~$T$ is cyclic, $\lambda \in \Irr(T)$ is uniquely determined by
$\lambda^2 = 1_T \neq \lambda$. Thus~$\lambda$ is $\nu$-invariant and the
claim follows from~(a). 

(c) This follows from Lemma~\ref{PoweringInvariance} and~(a).
\end{proof}

\begin{cor}
\label{NonPrincipalSeriesCorII}
Suppose that~$G$ is not one of the groups in rows~$1$,~$5$ or~$9$ of 
{\rm Table~\ref{TableOfGroups}}. Then~$(G,V,n)$ has the $E1$-property.
\end{cor}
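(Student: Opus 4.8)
The plan is to settle everything through the two criteria of Lemma~\ref{NonPrincipalSeriesCorI}, using that outside rows~$1$,~$5$ and~$9$ the automorphism $\mu$ occurring in $\nu = \ad_t \circ \mu$ comes from a cyclic group of field or graph-field automorphisms. First I would record that, since $t \in \overline{T}^\sigma$ centralizes~$T$, the automorphisms $\nu$ and $\mu$ induce the same map on~$T$ and hence the same permutation of $\Irr(T)$; thus every $\nu$-invariance question in Lemma~\ref{NonPrincipalSeriesCorI} is really a question about $\mu|_T$, and the order-two character~$\lambda$ of Proposition~\ref{PrincipalSeriesReduction} is the object to keep track of.

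Next I would invoke the description of $\Aut(G)$ from Subsection~\ref{AutomorphismsI}. For every $G$ not in rows~$1$,~$5$,~$9$ and different from $G_2(3^f)$, the factor $\Gamma_G$ is trivial: the retained untwisted groups (rows~$3$,~$4$,~$7$,~$8$,~$10$,~$11$) have no diagram symmetry in odd characteristic, and for the twisted groups (rows~$2$,~$6$,~$12$,~$13$,~$15$) the symmetry has been absorbed into~$\sigma$. Hence $\Gamma_G\Phi_G = \Phi_G = \langle \varphi \rangle$ is cyclic and $\mu = \varphi^b$ for some~$b$. Since $\varphi = \varphi_r$ acts on the split torus $\overline{T}$ as the $r$-power map $s \mapsto s^r$, the same holds on every $\varphi$-stable subgroup, in particular on $T \leq \overline{T}$; so $\mu$ powers the elements of~$T$ and Lemma~\ref{NonPrincipalSeriesCorI}(c) gives the $E1$-property. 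For the rank-one group ${^2G}_2(q)$ one may alternatively note that $T$ is cyclic and appeal to part~(b).

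It then remains to treat the exceptional case $G = G_2(3^f)$, where $\Gamma_G\Phi_G = \langle \tau \rangle$ is still cyclic but is generated by the exceptional graph-field automorphism $\tau$, with $\tau^2 = \varphi_3$. Writing $\mu = \tau^a$, if $a$ is even then $\mu$ is a power of $\varphi_3$ and thus powers~$T$, so part~(c) applies. If $a$ is odd, I would use part~(a): any power map $s \mapsto s^m$ on~$T$ with $m$ odd fixes every order-two character, so $\mu$ acts on the non-trivial order-two characters of $T \cong C_{q-1} \times C_{q-1}$ exactly as $\tau$ does, namely by interchanging two of the three of them and fixing their product~$\lambda_3$. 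Since the Weyl group $W(G_2)$ surjects onto $\GL_2(\mathbb{F}_2)$ and therefore acts transitively on these three characters, the $N$-orbit of~$\lambda$ contains the $\tau$-invariant character~$\lambda_3$; part~(a) then yields the $E1$-property.

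The main obstacle is exactly this $G_2(3^f)$ case: one must verify that the exceptional graph-field automorphism reduces to a coordinate interchange on the $2$-torsion of the character group of~$T$, and that $W(G_2)$ acts transitively there, which is the one place where a short explicit computation with the root system is unavoidable. A secondary point needing care is the claim that $\varphi_r$ restricts to the $r$-power map on the possibly twisted standard torus; I would justify this uniformly from the fact that $\varphi_r$ is the $r$-power endomorphism of the split torus $\overline{T}$ together with the inclusion $T \leq \overline{T}$.
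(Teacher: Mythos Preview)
Your argument is correct and follows the paper's plan: handle ${^2G}_2(q)$ via the cyclic torus (part~(b) of Lemma~\ref{NonPrincipalSeriesCorI}) and everything else via the fact that $\mu$ lies in $\Phi_G$ and hence powers~$T$ (part~(c)). The paper's proof is just a two-line compression of this.

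Where you differ is in singling out $G_2(3^f)$, and here you are in fact more careful than the paper. The paper asserts that in all remaining cases $\Aut(G) = \Inndiag(G) \rtimes \Phi_G$, but for $G_2(3^f)$ the group $\Gamma_G\Phi_G$ is cyclic of order~$2f$ with the extra generator the exceptional graph automorphism~$\tau$, and $\tau$ does \emph{not} act on $T \cong C_{q-1}^2$ by powering (it sends $h_{\alpha_1}(a)h_{\alpha_2}(b)$ to $h_{\alpha_1}(b)h_{\alpha_2}(a^3)$). Your fix via part~(a) --- that $\tau$ swaps two of the three non-trivial order-$2$ characters of~$T$ and fixes their product, while $W(G_2)$ surjects onto $\GL_2(\mathbb{F}_2)$ and is therefore transitive on all three, so the $N$-orbit of~$\lambda$ always contains a $\tau$-stable member --- is a clean way to close this gap that the paper glosses over.
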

\begin{proof}
If $G = {^2G}_2( 3^{2m+1} )$ for some $m \geq 1$, then~$T$ is cyclic. In the 
other cases, $\Aut(G) = \Inndiag(G) \rtimes \Phi_G$, and the elements 
of~$\Phi_G$ act on~$T$ by powering its elements. Our assertion thus follows 
from Lemma~\ref{NonPrincipalSeriesCorI}(b),(c).
\end{proof}

In the proofs of 
Propositions~\ref{NonPrincipalSeriesCorIII}--\ref{NonPrincipalSeriesCorV} below,
we let $\kappa \colon \mathbb{F}_q^* \rightarrow \mathbb{C}^*$ denote the unique
irreducible character of order~$2$.

\begin{prp}
\label{NonPrincipalSeriesCorIII}
Suppose that $G = \PSL_d(q)$ for some $d \geq 2$ and some prime power~$q$.
Then~$(G,V,n)$ has the $E1$-property.
\end{prp}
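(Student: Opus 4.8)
The plan is to stay inside the non-principal series, where $\lambda \neq 1_T$ and $\lambda^2 = 1_T$ by Proposition~\ref{PrincipalSeriesReduction}, and to reduce the whole question to producing an $N$-conjugate of $\lambda$ that is $\nu$-invariant, so that Lemma~\ref{NonPrincipalSeriesCorI}(a) applies. Following the reductions of Subsection~\ref{NonPrincipalSeries}, I would write $\nu = \ad_t \circ \mu$ with $t \in \overline{T}^{\sigma}$ and $\mu = \iota^a\varphi^b$, where $\iota$ is the standard graph automorphism (so $\Gamma_G = \langle \iota \rangle$ has order at most $2$ for $G = \PSL_d(q)$) and $\varphi$ the standard Frobenius, with $B$ and $T$ both $\nu$-stable. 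Here $\ad_t$ centralises $\overline{T} \supseteq T$, and $\varphi^b$ acts on $T$ by powering, so by Lemma~\ref{PoweringInvariance} the field part fixes $\lambda$; hence $\nu$ acts on the group of order-two characters of $T$ only through $\iota^a$. If $a = 0$ (in particular whenever $d = 2$, cf.\ Corollary~\ref{NonPrincipalSeriesCorII}), then $\lambda$ itself is $\nu$-invariant and we are done at once.

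For the graph case $a = 1$, I would use $\kappa$ to parametrise the order-two characters: every such character is $\lambda_I \colon \diag(a_1,\dots,a_d) \mapsto \prod_{i\in I}\kappa(a_i)$ for some $I \subseteq \{1,\dots,d\}$, with $\lambda_I = \lambda_{I^c}$, and the Weyl group $W = S_d$ permutes the coordinates, so the $N$-orbit (equivalently $W$-orbit) of $\lambda$ is governed solely by $k := |I|$, taken with $k \leq d/2$. Since $\iota$ acts on $X(T)$ as $-w_0$, where $w_0$ is the longest element, it acts on the two-torsion as $w_0$ (the factor $-1$ being trivial there). Thus an $N$-conjugate $\lambda_J$ of $\lambda$ is $\nu$-invariant exactly when $w_0(J) \in \{J, J^c\}$, i.e.\ when $w_0$ is $W$-conjugate into $\Stab_W(\lambda) = S_k \times S_{d-k}$ (or its index-two extension when $k = d/2$).

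Now $w_0$ is the reversal involution on $1,\dots,d$, a product of $\lfloor d/2\rfloor$ transpositions. A short count shows a suitable $J$ exists unless $d$ is even and $k$ is odd with $k \neq d/2$: when $k = d/2$ one takes $J = \{1,\dots,d/2\}$, giving $w_0(J) = J^c$, and otherwise $w_0$ embeds into $S_k \times S_{d-k}$ precisely when $k$ and $d-k$ are not both odd. In the one remaining configuration I would instead rule out the existence of $V$ on parity grounds: by the Howlett--Lehrer comparison theorem \cite{HowLeh2} the constituents of $R_T^G(\lambda)$ carry degrees equal to $\binom{d}{k}_q$ times generic degrees of the Hecke algebra of $W_\lambda = S_k \times S_{d-k}$, and for odd $q$ one has $\binom{d}{k}_q \equiv \binom{d}{k} \pmod 2$ (their difference is divisible by $q-1$, which is even), while $\binom{d}{k}$ is even when $d$ is even and $k$ is odd. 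Hence every such constituent, and $\dim V$ in particular, would be even, contradicting that $V$ has odd dimension; so this case is vacuous. In all surviving cases some $N$-conjugate of $\lambda$ is $\nu$-invariant, and Lemma~\ref{NonPrincipalSeriesCorI}(a) completes the proof.

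I expect the main obstacle to be precisely the graph automorphism: unlike a field automorphism it genuinely moves $\lambda$ within its $W$-orbit, and one cannot in general retain a $\nu$-stable Borel (needed to feed Lemma~\ref{MainHCLemma} through Lemma~\ref{NonPrincipalSeriesCorI}(a)) while simultaneously fixing $\lambda$. The two observations that make the argument go through---that $\iota$ reduces to $w_0$ on two-torsion, turning the problem into a conjugacy question in $S_d$, and that the single intractable configuration is forced to be even-dimensional by the $2$-part of the Gaussian binomial $\binom{d}{k}_q$---are where I would spend the most care, with the degree-and-parity bookkeeping being the delicate step.
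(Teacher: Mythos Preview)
Your proposal is correct and agrees with the paper's proof up through the parametrisation by $\lambda_I$ and the observation that only the graph automorphism $\iota$ can move $\lambda$; the difference lies in how you dispose of the residual case $d$ even, $|I|$ odd. The paper does not argue this case is vacuous: instead, for $d \geq 4$ it passes to the $\iota$-invariant Levi subgroup $\tilde{L}$ of type $A_1$ at the central node of the $A_{d-1}$ diagram, arranges $I$ so that $|I \cap \{d/2,d/2+1\}| = 1$ with the rest of~$I$ reversal-symmetric, and observes that the $\langle\iota\rangle$-orbit of $\lambda$ then coincides with its $N_{\tilde{L}}(\tilde{T})$-orbit, so that $\psi := R_{\tilde{T}}^{\tilde{L}}(\lambda)$ is irreducible and $\iota$-stable; Lemma~\ref{MainHCLemma} finishes. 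Your route is more arithmetic: you isolate $k = d/2$ (where $J = \{1,\dots,d/2\}$ is visibly $\iota$-stable via $w_0(J)=J^c$) and kill the remaining subcase $k \neq d/2$ by a parity count on degrees. Two small caveats worth tightening: the Howlett--Lehrer comparison theorem supplies the bijection with $\Irr(W_\lambda)$ but not the degree formula you invoke---that comes from Green's description of $\Irr(\GL_d(q))$ or, equivalently, Jordan decomposition---and you are implicitly using that the $\GL_d$-constituents restrict irreducibly to $\SL_d$ (hence to $\PSL_d$), which holds precisely because $k \neq d/2$ forces the only determinant twist stabilising the Lusztig series to be trivial. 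With those points made explicit your argument stands; it trades the paper's structural Levi step for an elementary $2$-adic observation on $\binom{d}{k}_q$.
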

\begin{proof}
Let us work with $\tilde{G} := \SL_d(q)$, and view the characters of~$G$ as
characters of~$\tilde{G}$ via inflation. Let~$\tilde{T}$ and~$\tilde{N}$ denote 
the inverse images of~$T$, respectively~$N$, under the canonical epimorphism
$\tilde{G} \rightarrow G$. Then~$\tilde{T}$ is the standard
torus of~$\tilde{G}$, consisting of the diagonal matrices of determinant~$1$.
For $1 \leq i \leq d$, let
$\lambda_i\colon \tilde{T} \rightarrow \mathbb{C}^*, \diag(t_1, \ldots , t_d)
\mapsto \kappa( t_i )$. Then every irreducible character of~$\tilde{T}$ is a
product of some $\lambda_i$s. For $I \subset \{ 1, \ldots , d \}$ write
$\lambda_I := \prod_{i \in I} \lambda_i$. Notice that $\lambda_I = \lambda_{I'}$
with $I' = \{ 1, \ldots , d \} \setminus I$, and that 
$\lambda_{\{ 1, \ldots , d \}} = 1_{\tilde{T}}$. Notice also that for 
$I, J \subseteq \{ 1, \ldots , d \}$, the characters $\lambda_I$ and~$\lambda_J$ 
are conjugate by an element of $\tilde{N}$, if and only if $|I| = |J|$ or 
$|I| = d - |J|$. Finally, if~$\iota$ denotes the standard graph automorphism 
of~$\tilde{G}$, then ${^\iota\!\lambda}_i = \lambda_{d-i+1}$, for 
$1 \leq i \leq d$.

Let $I \leq \{ 1, \ldots , d \}$ be such that $\lambda = \lambda_I$. By the 
remarks in the previous paragraph, the $\tilde{N}$-orbit of~$\lambda$ contains a 
$\iota$-stable element, if either~$d$ is odd, or~$d$ and~$|I|$ are even. Then
the $N$-orbit of~$\lambda$ contains a $\nu$-stable element, and we are done by 
Lemma~\ref{NonPrincipalSeriesCorI}(a).

Suppose then that~$d$ is even and~$|I|$ is odd. If $d = 2$, then~$T$ is cyclic
and our claim follows from Lemma~\ref{NonPrincipalSeriesCorI}(b). Suppose then 
that $d \geq 4$. Let~$\tilde{L}$ denote the $\iota$-invariant Levi subgroup of 
type~$A_1$ corresponding to the central node of the Dynkin diagram 
of~$\tilde{G}$; see Figure~\ref{DynkinDiagramA}.  Assume, without loss of 
generality, that $|I \cap \{ d/2, d/2 + 1 \}| = 1$, and that 
$I \setminus \{ d/2, d/2 + 1 \}$ is invariant under reversing the elements. 
Then the orbit of~$\lambda$ under $\langle \iota \rangle$ equals the orbit 
of~$\lambda$ under $N_{\tilde{L}}( \tilde{T} )$. This shows that 
$\psi := R_{\tilde{T}}^{\tilde{L}}( \lambda )$ is irreducible and fixed 
by~$\iota$. We may thus assume that~$\psi$ is $\nu$-stable. As the 
character~$\chi$ of~$V$ occurs in $R_{\tilde{L}}^{\tilde{G}}( \psi ) = 
R_{\tilde{L}}^{\tilde{G}}( R_{\tilde{T}}^{\tilde{L}}( \lambda ) ) = 
R_{\tilde{T}}^{\tilde{G}}( \lambda )$ with odd multiplicity, we are done
with Lemma~\ref{MainHCLemma}.
\end{proof}

\begin{prp}
\label{NonPrincipalSeriesCorIV}
If $G = E_6(q)$, then~$(G,V,n)$ has the $E1$-property.
\end{prp}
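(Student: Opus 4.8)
The plan is to show that, in the non-principal series situation at hand, the $N$-orbit of the cuspidal character $\lambda \neq 1_T$ furnished by Proposition~\ref{PrincipalSeriesReduction} always contains a $\nu$-invariant element, so that Lemma~\ref{NonPrincipalSeriesCorI}(a) applies. By the reductions already in force in this subsection we have $\nu = \ad_t \circ \mu$ with $t \in \overline{T}^{\sigma}$ and $\mu \in \Gamma_G\Phi_G$, and since $G = E_6(q)$ is untwisted with $q$ odd, Subsection~\ref{AutomorphismsI} gives $\Gamma_G\Phi_G = \langle \iota \rangle \times \langle \varphi \rangle$, where $\iota$ is the standard graph automorphism with $|\iota| = 2$. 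If $\mu \in \Phi_G$, then $\mu$ is a power of $\varphi$, hence acts on $T$ by powering its elements, and Lemma~\ref{NonPrincipalSeriesCorI}(c) finishes the argument at once. I would therefore reduce to the case $\mu = \iota \circ \varphi^b$ for some integer~$b$.

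In that case the central object is the $\mathbb{F}_2$-space $\Irr(T)[2]$ of characters $\lambda'$ of~$T$ with $(\lambda')^2 = 1_T$, on which $N/T = W := W(\overline{G})^{\sigma} = W(E_6)$ acts. First I would identify $\Irr(T)[2]$, as a $W$-module, with $Q/2Q$, the reduction modulo~$2$ of the $E_6$ root lattice: since $[\overline{T}^{\sigma} : T]$ divides $\gcd(3, q-1)$ and is odd, $T$ and $\overline{T}^{\sigma}$ have the same Sylow $2$-subgroup, and $\Irr(\overline{T}^{\sigma})[2] \cong Y/2Y \cong Q/2Q$ because $E_6$ is simply laced and the relevant lattice indices are prime to~$2$. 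The structural facts I would then invoke are that $\mathcal{Q}(x) = \tfrac{1}{2}(x,x) \bmod 2$ is a nondegenerate $W$-invariant quadratic form of minus type (nondegeneracy because the Cartan matrix of $E_6$ has determinant~$3$), that $W(E_6) \cong O_6^-(2)$ acts as the full orthogonal group with exactly two orbits on nonzero vectors, distinguished by the value of $\mathcal{Q}$, and that $\iota$ acts on this space as $w_0$, the longest element of $W$. The last point holds because $\iota$ induces $-w_0$ on~$Q$ and $-1 \equiv 1 \pmod 2$.

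It then remains to check that the $\iota$-fixed subspace $\mathrm{Fix}(\iota) = \mathrm{Fix}(w_0)$ of $\Irr(T)[2]$ meets both nonzero $W$-orbits. Since $w_0^2 = 1$, the operator $w_0 - 1$ satisfies $(w_0 - 1)^2 = 0$ over $\mathbb{F}_2$, so $\mathrm{im}(w_0 - 1) \subseteq \mathrm{Fix}(w_0)$ and $\dim_{\mathbb{F}_2}\mathrm{Fix}(w_0) \geq 3$. As the Witt index of $O_6^-(2)$ equals~$2$, a subspace of dimension at least~$3$ cannot be totally singular, so $\mathrm{Fix}(w_0)$ contains a vector with $\mathcal{Q} = 1$; and any quadratic space over $\mathbb{F}_2$ of dimension at least~$3$ contains a nonzero singular vector, so $\mathrm{Fix}(w_0)$ also contains a nonzero vector with $\mathcal{Q} = 0$. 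Hence $\mathrm{Fix}(\iota)$ meets both nonzero orbits, and the $N$-orbit of~$\lambda$ contains an $\iota$-fixed character $\lambda'$. Because $\ad_t$ centralizes~$T$ and thus fixes every character of~$T$, while $\varphi^b$ acts by powering and so fixes the order-$2$ character $\lambda'$, the character $\lambda'$ is $\nu$-invariant, and Lemma~\ref{NonPrincipalSeriesCorI}(a) yields the $E1$-property.

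The main obstacle is the second paragraph: pinning down the precise $W$-module structure of $\Irr(T)[2]$, i.e.\ the identification with $Q/2Q$, the minus type of $\mathcal{Q}$, the isomorphism $W(E_6) \cong O_6^-(2)$ realized as the full orthogonal group, and the equality of the $\iota$- and $w_0$-actions modulo~$2$. Once these standard facts about the $E_6$ root lattice are in place, the two-orbit count together with the fixed-space dimension estimate makes the conclusion immediate; in contrast to the $\PSL_d(q)$ case of Proposition~\ref{NonPrincipalSeriesCorIII}, I expect no residual case requiring Harish-Chandra induction from a proper Levi subgroup to arise.
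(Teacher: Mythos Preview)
Your argument is correct and follows the same overall strategy as the paper: reduce to showing that the $N$-orbit of~$\lambda$ contains a $\iota$-stable element, via the identification of $\Irr(T)[2]$ with $X(\overline{T})/2X(\overline{T}) = Q/2Q$ (using that $[\overline{T}^{\sigma}:T]$ is odd), and then invoke Lemma~\ref{NonPrincipalSeriesCorI}(a).

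The only genuine difference lies in how the final orbit check is carried out. The paper simply invokes a Chevie computation to find that $W$ has exactly three orbits on $Q/2Q$ (including $\{0\}$), each containing a $\iota$-stable element. You instead give a conceptual argument: the well-known isomorphism $W(E_6)\cong O_6^-(2)$ realized on $Q/2Q$ via the quadratic form $\mathcal{Q}(x)=\tfrac12(x,x)\bmod 2$, the fact that $\iota$ acts on $Q$ as $-w_0$ and hence as $w_0$ modulo~$2$, and then the estimate $\dim_{\mathbb{F}_2}\mathrm{Fix}(w_0)\geq 3$ together with the Witt-index bound and the isotropy of any $\mathbb{F}_2$-quadratic form in dimension $\geq 3$. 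This recovers the paper's three orbits (sizes $1$, $27$, $36$) without computer assistance and makes transparent why the $\iota$-fixed subspace must meet both nonzero orbits. The trade-off is that you rely on several standard but nontrivial facts about the $E_6$ root lattice (faithfulness and surjectivity of $W\to O(Q/2Q)$, the minus type of~$\mathcal{Q}$, and the identification of the diagram automorphism with $-w_0$), whereas the paper's computation is self-contained once Chevie is accepted.
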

\begin{proof}
Write $\mu = \iota \circ \mu'$ for some $\iota \in \Gamma_G$ and some 
$\mu' \in \Phi_G$. As~$\mu'$ acts on~$T$ by powering its elements, it suffices 
to show that some $N$-conjugate of~$\lambda$ contains a $\iota$-stable element; 
see Lemma~\ref{PoweringInvariance} and 
Lemma~\ref{NonPrincipalSeriesCorI}(a). This is trivial if $\iota = \id$.
We may thus assume that~$\iota$ equals the non-trivial graph automorphism 
of~$G$. By its very definition,~$\iota$ extends to the graph automorphism 
of~$\overline{G}$ defined in~\cite[Theorem~$1.15.2$]{GLS}; see 
\cite[Definition~$2.5.10$(b)]{GLS}. This extension, as well as its restriction 
to~$\overline{G}^{\sigma}$, are also denoted by~$\iota$.

Since $[\overline{G}^{\sigma}\colon\!G] = [\overline{T}^{\sigma}\colon\!T]$ is 
odd, restriction of characters yields a bijection between 
$\{ \tau \in \Irr( \overline{T}^{\sigma} ) \mid \tau^2 = 1_{\overline{T}^{\sigma}} \}$ 
and $\{ \tau \in \Irr( T ) \mid \tau^2 = 1_{T} \}$. 
The mapping $\psi \mapsto \kappa \circ \psi|_{\overline{T}^{\sigma}}$ 
for $\psi \in X( \overline{T } )$, yields a $\langle \iota \rangle$-equivariant
isomorphism 
$$X(\overline{T})/2X(\overline{T})  \rightarrow
\{ \tau \in \Irr( \overline{T}^{\sigma} ) \mid \tau^2 = 1_{\overline{T}^{\sigma}} \}.$$
As~$\overline{G}$ is of adjoint type,~$X(\overline{T})$ has a basis 
consisting of the set~$\Pi$ of simple roots. Using this, it is easy to check 
with Chevie~\cite{chevie} that the Weyl group 
$N_{\overline{G}^{\sigma}}( \overline{T} )/\overline{T}^{\sigma} = N/T$ 
of~$\overline{G}^{\sigma}$ has exactly three orbits 
on~$X(\overline{T})/2X(\overline{T})$, each of which contains a $\iota$-stable 
element. 

Hence some $N$-conjugate of~$\lambda$ is $\nu$-stable and we are done by
Lemma \ref{NonPrincipalSeriesCorI}(a).
\end{proof}

\begin{prp}
\label{NonPrincipalSeriesCorV}
If $G = \P\Omega^+_{2d}(q)$ for some $d \geq 4$, then~$(G,V,n)$ has the 
$E1$-property.
\end{prp}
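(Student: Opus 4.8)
As in the proofs of Propositions~\ref{NonPrincipalSeriesCorIII} and~\ref{NonPrincipalSeriesCorIV}, the plan is to find an $N$-conjugate of~$\lambda$ that is $\nu$-invariant and then to quote Lemma~\ref{NonPrincipalSeriesCorI}(a). Write $\mu = \iota \circ \mu'$ with $\iota \in \Gamma_G$ and $\mu' \in \Phi_G$, as in Subsection~\ref{AutomorphismsI}. The field part~$\mu'$ acts on~$T$ by powering its elements, so~$\lambda$, and indeed every order-two character of~$T$, is fixed by~$\mu'$ (Lemma~\ref{PoweringInvariance}); moreover~$\ad_t$ centralises~$T$ and hence fixes every character of~$T$. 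Consequently an $N$-conjugate of~$\lambda$ is $\nu$-invariant as soon as it is fixed by~$\iota$, and it suffices to produce $w \in N/T$ with $w\lambda$ fixed by~$\iota$. If $\iota = \id$ we are done, so assume~$\iota$ is a non-trivial symmetry of the Dynkin diagram of type~$D_d$ (Figure~\ref{DynkinDiagramA}).

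Since~$\overline{G}$ is adjoint, $X(\overline{T})$ is the root lattice, which I realise as $Q = \{ \sum_{i=1}^d a_i \varepsilon_i : \sum_i a_i \in 2\Z \} \subseteq \Z^d$ in the standard coordinates; then $W = W(\overline{G})$ acts by permutations and even sign changes of the~$\varepsilon_i$, and for $d \geq 5$ the involution~$\iota$ acts by $\varepsilon_d \mapsto -\varepsilon_d$. The characters of~$\overline{T}^{\sigma}$ of order dividing two are $t \mapsto \kappa( v(t) )$ for $v \in Q$, and this identifies them, $W$- and $\iota$-equivariantly, with $Q/2Q$. Reduction of coordinates modulo two defines a $W$-equivariant map $\pi \colon Q/2Q \to \F_2^d$ with image the even-weight subspace~$E$ and kernel $\{ 0, z \}$, where~$z$ is the class of $2\varepsilon_1$; on~$E$ the group~$W$ acts through the symmetric group~$S_d$, while the sign changes and~$\iota$ act trivially. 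Since $\iota(\bar v) = \bar v + a_d\, z$, a class~$\bar v$ is fixed by~$\iota$ exactly when the last coordinate of $\pi( \bar v )$ vanishes.

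Now lift~$\lambda$ to an order-two character $\hat\lambda = \bar v$ of~$\overline{T}^{\sigma}$. The crux is that every $W$-orbit on $Q/2Q$ either contains a class fixed by~$\iota$ or is one of the $W$-fixed points, all of which are trivial on~$T$. Indeed, if $x := \pi(\bar v) \in E$ is neither~$0$ nor $(1,\dots,1)$, then~$x$ has a vanishing coordinate, which a transposition in~$S_d \subseteq W$ moves into the last position; the resulting conjugate of~$\bar v$ is then fixed by~$\iota$ by the previous remark. The only exceptions are $x = 0$, giving $\bar v \in \{ 0, z \}$, and $x = (1,\dots,1)$, which occurs only for~$d$ even and gives the two ``all-odd'' classes $\pi^{-1}(x) = \{ b_1, b_2 \}$; these four classes are precisely the $W$-fixed points. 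Here the arithmetic peculiar to this case enters: with~$P$ the weight lattice and $\omega = \tfrac12\sum_i \varepsilon_i$, $\omega' = \tfrac12(\varepsilon_1 + \dots + \varepsilon_{d-1} - \varepsilon_d)$ the half-spin weights, one has $z = 2\varepsilon_1$, $b_1 = 2\omega$ and $b_2 = 2\omega'$, so all four classes lie in $2P \cap Q$. As~$T$ is the image of the maximal torus of $\Spin_{2d}(\overline{\F})^{\sigma}$ under the isogeny onto~$\overline{G}^{\sigma}$, the order-two characters of~$\overline{T}^{\sigma}$ trivial on~$T$ are exactly the classes in $(2P \cap Q)/2Q$; hence $\{ 0, z, b_1, b_2 \}$ are all trivial on~$T$. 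Because~$\lambda$ is non-trivial,~$\bar v$ avoids these classes, so the permutation argument yields $w \in N/T$ with $w\lambda$ fixed by~$\iota$, whence $\nu$-invariant, and Lemma~\ref{NonPrincipalSeriesCorI}(a) completes the proof.

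The step I expect to be the main obstacle is precisely this last identification: that the order-two characters of~$\overline{T}^{\sigma}$ trivial on~$T$ are $(2P \cap Q)/2Q$, and that all $W$-fixed ones among them are accounted for. This is where the \emph{even} index $[\overline{G}^{\sigma} : G]$ is used, in contrast with the odd index exploited for~$E_6$ in Proposition~\ref{NonPrincipalSeriesCorIV}; it rests on the interplay of the three lattices $Q \subseteq \Z^d \subseteq P$ and on the $\sigma$-coinvariants of $Z(\Spin_{2d})$. Finally, for $d = 4$ the group $\Gamma_G \cong S_3$ also contains triality. Its involutions are, after relabelling the~$\varepsilon_i$, again of the form $\varepsilon_d \mapsto -\varepsilon_d$ and are covered above, whereas an~$\iota$ of order three is not a signed permutation of the~$\varepsilon_i$; this case I would dispatch by listing the $W(D_4)$-orbits on $Q/2Q$ and their triality-fixed classes, for instance with Chevie~\cite{chevie} as in Proposition~\ref{NonPrincipalSeriesCorIV}, again discarding the $W$-fixed classes since they are trivial on~$T$.
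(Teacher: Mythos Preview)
Your argument is correct, but for the case $|\iota| = 2$ you take a noticeably longer road than the paper. You work in the adjoint picture, identifying order-two characters of~$\overline{T}^{\sigma}$ with $Q/2Q$, where~$\iota$ does \emph{not} act trivially; you then carry out the pleasant lattice computation showing that the $W$-fixed classes are exactly $(2P\cap Q)/2Q$ and that these are precisely the lifts trivial on~$T$, so that every non-trivial orbit meets the $\iota$-fixed set. The paper instead passes to $\tilde{G} = \SO^{+}_{2d}(q)$, whose torus has character lattice $\Z^d$: there $\iota$ acts by $\varepsilon_d \mapsto -\varepsilon_d$, hence \emph{trivially} on $\Z^d/2\Z^d$, so every order-two character of~$\tilde{T}$, and in particular~$\lambda$ itself, is $\iota$-fixed. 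No $W$-conjugation and no identification of $(2P\cap Q)/2Q$ is needed. Your detour through the adjoint lattice is self-contained and illuminates why the even index matters, but it is not required for the conclusion.

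Two small points. First, your remark that for $d=4$ the three involutions in $\Gamma_G \cong S_3$ are ``after relabelling the $\varepsilon_i$'' of the standard form is not literally correct: the non-standard involutions do not act as signed permutations of the~$\varepsilon_i$. What is true, and what both you and the paper really use, is that they are $\Gamma_G$-conjugate to the standard involution, and since $\Gamma_G$ normalises~$N$ and~$T$ this transports the conclusion. Second, for the triality case $|\iota| = 3$ you defer to Chevie as the paper does; the paper records the outcome explicitly (five $W$-orbits on $Q/2Q$: four singletons and one of length~$12$, the latter containing a root class and a $\iota$-fixed class), together with the observation that root classes restrict non-trivially to~$T$, so the three non-trivial order-two characters of~$T$ all come from that orbit.
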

\begin{proof}
Write $\mu = \iota \circ \mu'$ for some $\iota \in \Gamma_G$ and some 
$\mu' \in \Phi_G$. As in the proof of Proposition~\ref{NonPrincipalSeriesCorIV}, 
it suffices to show that some $N$-conjugate of~$\lambda$ contains a 
$\iota$-stable element, and we may thus assume that~$\iota$ is non-trivial.

We begin with the case $|\iota| = 2$. Then~$\iota$ is $\Gamma_G$-conjugate to
the standard graph automorphism of~$G$, and we will assume that~$\iota$ is 
equal to the latter. We claim that~$\iota$ stabilizes~$\lambda$.
To prove this, consider the group $\tilde{G} := \SO^+_{2d}(q)$, which 
contains~$G$ as a composition factor. Indeed, the commutator subgroup 
of~$\tilde{G}$ equals $\Omega^+_{2d}(q)$, and~$G$ is the quotient of the latter 
by its center; see \cite[Section~$11$]{Taylor}.

To realize~$\tilde{G}$ as a matrix group, equip the standard vector space
$\mathbb{F}_q^{2d}$ with a non-degenerate symmetric bilinear form, and 
choose a basis $e_1, \ldots , e_d, e_d', \ldots , e_1'$ such that $
(e_i, e_i')$ is a hyperbolic pair for all $1 \leq i \leq d$. Then, with respect
to this basis, $\tilde{G} = \{ x \in \SL_{2d}(q) \mid x^tJx = J \}$, 
where~$J$ is the matrix with $1$'s along the antidiagonal, and $0$'s, elsewhere.
The standard torus~$\tilde{T}$ of~$\tilde{G}$ is given by
$$\tilde{T} = 
\{ \diag( \zeta_1, \ldots, \zeta_d, \zeta_d^{-1}, \ldots , \zeta_1^{-1} ) \mid 
\zeta_1, \ldots, \zeta_d \in \mathbb{F}_q^* \}.$$
Then~$T$ is a quotient of $\tilde{T} \cap \Omega^+_{2d}(q)$, and we may 
view~$\lambda$ as an irreducible character of $\tilde{T} \cap \Omega^+_{2d}(q)$ 
via inflation. Now $\iota$ is induced by the standard graph automorphism 
of~$\tilde{G}$ of order~$2$. The latter, denoted by $\tilde{\iota}$, is realized 
by conjugating~$\tilde{G}$ with the element of $\GL_{2d}(q)$ which swaps the 
basis elements $e_d$ and $e_d'$, and fixes all the others.

Observe that~$\lambda$, viewed as an irreducible character of 
$\tilde{T} \cap \Omega^+_{2d}(q)$, extends to some
$\tilde{\lambda} \in \Irr( \tilde{T} )$ satisfying 
$\tilde{\lambda}^2 = 1_{\tilde{T}}$, and it suffices to show 
that~$\tilde{\iota}$ fixes~$\tilde{\lambda}$. 
Let $\tilde{\lambda}_i \colon \tilde{T} \rightarrow \mathbb{C}^*$,
$\diag( \zeta_1, \ldots, \zeta_d, \zeta_d^{-1}, \ldots , \zeta_1^{-1} ) \mapsto
\kappa( \zeta_i )$ for $1 \leq i \leq d$. Then $\tilde{\lambda} \in 
\langle \tilde{\lambda}_1, \ldots , \tilde{\lambda}_d \rangle$. 
Clearly,~$\tilde{\lambda}_i$ is $\tilde{\iota}$-invariant for all 
$1 \leq i \leq d$, which proves our claim. This gives our assertion in case
$|\iota| = 2$.

We are left with the case that $d = 4$ and $|\iota| = 3$. Recall that
$\overline{T}^{\sigma}/T \cong \overline{G}^{\sigma}/G$. It thus follows
from \cite[Table~$6.1.2$]{GLS} and \cite[Theorem~$2.5.20$]{GeMa} that
$\overline{T}^{\sigma}/T$ is elementary abelian of order~$4$. 
In particular,~$\lambda$ extends to 
four distinct characters $\bar{\lambda} \in \Irr( \overline{T}^{\sigma} )$ with 
$\bar{\lambda}^2 = 1_{\overline{T}^{\sigma}}$. We now argue as in the proof of 
Proposition~\ref{NonPrincipalSeriesCorIV}. Using Chevie, we find that the Weyl 
group $N_{\overline{G}^{\sigma}}( \overline{T} )/\overline{T}^{\sigma}$
of~$\overline{G}^{\sigma}$ has exactly five orbits 
on~$X(\overline{T})/2X(\overline{T})$, four of length~$1$ and one of 
length~$12$. 
The latter orbit contains the image of a root, as well as a $\iota$-stable 
element, namely the image of the highest root. If $\bar{\lambda}' \in 
\Irr( \overline{T}^{\sigma} )$ corresponds to an image of a root, then 
$\Res_T^{\overline{T}^{\sigma}}( \bar{\lambda}' )$ is non-trivial. Thus this 
orbit of length~$12$ accounts for the three irreducible characters of~$T$ 
of order~$2$. In particular, some $N$-conjugate of~$\lambda$ is $\iota$-stable.
This completes the proof.
\end{proof}

\subsection{Principal series representations}
\label{PSExceptional}
Here, we consider the case that $\lambda = 1_T$, i.e.\ that~$V$ is a principal 
series module. As explained in the introduction to 
Subsection~\ref{NonPrincipalSeries}, we may assume that 
$\nu = \ad_t \circ \mu$ for some $t \in \overline{T}^{\sigma}$ and some 
$\mu \in \Gamma_G\Phi_G$. In order to apply Lemma~\ref{LeviSubgroupsReduction},
we choose~$P$ and~$L$ as a standard parabolic subgroup and a standard Levi 
subgroup, respectively. Then~$P$ and~$L$ are $\ad_t$-invariant. With a suitable
choice, we can also achieve that~$P$ and~$L$ are~$\mu$-invariant. Working 
inductively, we may assume that~$L$ has the $E1$-property. If the remaining
hypothesis of this lemma is satisfied, only the constituents of 
$R_L^G( \mathbb{R} )$ occurring with odd multiplicity have to be considered. 
If~$L$ is a large Levi subgroup, the number of such constituents is small, which 
restricts the possible $\mathbb{R}G$-modules~$V$ to be investigated.

We begin with the exceptional groups.

\begin{prp}
\label{ExceptionalGroupsOddCharacteristic}
Let~$G$ be an exceptional group of Lie type, such that every proper subgroup 
of~$G$ has the $E1$-property. Then~$(G,V,n)$ has the $E1$-property.
\end{prp}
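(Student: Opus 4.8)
The plan is to combine the reductions already in place with a focused analysis of the principal series. First I would use the earlier results to dispose of the non-principal series modules: in the notation of Proposition~\ref{PrincipalSeriesReduction}, if $\lambda \neq 1_T$ then $(G,V,n)$ has the $E1$-property by Corollary~\ref{NonPrincipalSeriesCorII} when $G \neq E_6(q)$, and by Proposition~\ref{NonPrincipalSeriesCorIV} when $G = E_6(q)$. Hence I may assume $\lambda = 1_T$, so that $V$ is a principal series module corresponding to some $\vartheta \in \Irr(W)$, where $W := W(\overline{G})^{\sigma}$. If $\vartheta(1)$ is even, Corollary~\ref{PSCor} finishes the argument. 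So the real work concerns the irreducible characters of $W$ of odd degree.

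The main tool for the odd-degree case is Lemma~\ref{LeviSubgroupsReduction}. For a given $\vartheta$ of odd degree, I would search for a proper, $\iota$-stable subset $I \subseteq \Pi$ (where $\iota$ is the graph symmetry determined by $\mu$) such that the standard Levi subgroup $L_I$ has the $E1$-property—automatic here, since $L_I$ is a proper subgroup—and such that the multiplicity of $V$ in $R_{L_I}^G(\mathbb{R})$ is even. By the Howlett--Lehrer comparison recalled in Subsection~\ref{HarishChandraTheory}, this multiplicity equals $\langle \Res^{W}_{W(\overline{G})_I^{\sigma}}(\vartheta), 1 \rangle$, a purely Weyl-group quantity computable with Chevie. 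I would additionally need the automorphisms of $L_I$ to fix its principal series characters; taking $I$ to be a single $\iota$-orbit of nodes keeps $L_I$ of small type (such as $A_1$, $A_1 \times A_1$ or $A_2$), where this invariance is readily verified. Running through the relevant Weyl groups $W(\overline{G})^{\sigma}$—of type $G_2$, $F_4$, $E_6$, $E_7$, $E_8$ in the untwisted cases and their twisted foldings ($F_4$ for ${}^2E_6$, $G_2$ for ${}^3D_4$, and order two for ${}^2G_2$)—I expect to exhibit an admissible $I$ for each odd-degree $\vartheta$.

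The Steinberg module, corresponding to $\vartheta = \mathrm{sgn}$, is the one recurrent odd-degree case that I would check separately: here $\langle \Res^{W}_{W(\overline{G})_I^{\sigma}}(\mathrm{sgn}), 1 \rangle = 0$ for every non-trivial parabolic, so Lemma~\ref{LeviSubgroupsReduction} applies with any proper non-trivial $\iota$-stable Levi, the resulting $\psi$ being the automorphism-invariant Steinberg character of $L_I$. This works as soon as $G$ has semisimple rank at least two and admits such a Levi; the genuinely special situations are $G = G_2(3^f)$, where the extra graph automorphism in characteristic~$3$ interchanges the two $A_1$-Levis, and the rank-one group $G = {}^2G_2(3^{2m+1})$, which has no proper non-trivial Levi at all, both of which are already covered by Lemma~\ref{SmallRankOddCharacteristic}. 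I expect the main obstacle to be exactly the bookkeeping forced by these $\iota$-constraints: guaranteeing, for every odd-degree character of every relevant exceptional Weyl group, the existence of an $\iota$-stable parabolic of even multiplicity whose Levi has automorphism-invariant principal series characters. This is a finite but delicate Chevie verification, most intricate for $E_6$ because of its order-two graph automorphism, and for $E_7$ and $E_8$ because of their large number of odd-degree Weyl characters.
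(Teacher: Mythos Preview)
Your plan rests on the same key tool as the paper, Lemma~\ref{LeviSubgroupsReduction}, but the execution differs in a way that leaves two soft spots.

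First, the paper does not enumerate the odd-degree $\vartheta \in \Irr(W)$. Instead it applies Lemma~\ref{LeviSubgroupsReduction} with a \emph{large} Levi~$L$ first (e.g.\ type $E_7$ inside $E_8$, or $E_6$ inside $E_7$). Since $\Ind_{W_L}^{W}(1)$ then has very few constituents, at most three non-trivial principal series modules of odd dimension survive with odd multiplicity, and these are dispatched by a second, smaller Levi~$L_1$. This two-pass reduction replaces your proposed sweep over all odd-degree Weyl characters by a short table. Your direct approach via small Levis is correct in principle, but single $\iota$-orbits are not always sufficient: in $W(E_7)$ the character~$\phi_{27,2}$ has multiplicity~$21$ in every $A_1$ parabolic, and the paper needs $L_1$ of type $D_5 \times A_1$ there. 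Your caution about automorphism invariance of the principal series characters of~$L_I$ for larger Levis is unnecessary; this holds generally by \cite[Theorem~$2.5$(a)]{MalleExt}, which the paper invokes.

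Second, your treatment of $G_2(3^f)$ is incomplete. When~$\iota$ is the non-trivial graph symmetry, \emph{no} proper non-empty $\iota$-stable subset of~$\Pi$ exists, so Lemma~\ref{LeviSubgroupsReduction} is unavailable not only for the Steinberg module but also for $\phi_{1,3}'$ and $\phi_{1,3}''$. The paper closes this by observing, via \cite[Theorem~$2.5$(d)]{MalleExt}, that~$\nu$ then swaps $\phi_{1,3}'$ and $\phi_{1,3}''$; since~$\chi$ is $\nu$-invariant, neither can equal~$\chi$, and only the Steinberg module remains, handled by Lemma~\ref{SmallRankOddCharacteristic}. Finally, your opening reduction of the non-principal series case is superfluous: Subsection~\ref{PSExceptional} already works under the standing hypothesis $\lambda = 1_T$.
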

\begin{proof}
By Table~\ref{TableOfGroups}, the group~$G$ is one of $G_2(q)$, 
$F_4(q)$, $E_6(q)$, $E_7(q)$, $E_8(q)$, ${^2\!E}_6(q)$ or ${^3\!D}_4(q)$ 
with~$q$ odd, or a Ree group $G = {^2G}_2(3^{2m+1})$ for some positive 
inter~$m$. 
	
The only non-trivial principal series module of a Ree group is its 
Steinberg module, so that the claim for these groups follows from 
Lemma~\ref{SmallRankOddCharacteristic}. 
Thus, let us assume that~$G$ is not one of the groups ${^2G}_2(3^{2m+1})$ in 
the following.

The table below specifies, for each~$G$, two or three standard Levi subgroups 
of~$G$ as follows. The second column gives the Weyl 
group~$W(\overline{G})^{\sigma}$ of~$G$, denoted by its Dynkin type. The third 
column lists subgraphs of the Dynkin diagram of $W(\overline{G})^{\sigma}$; 
these determine parabolic subgroups of~$W(\overline{G})^{\sigma}$ and also 
standard parabolic subgroups and standard Levi subgroups of~$G$. In case when 
$W(\overline{G})^{\sigma}$ is of type~$G_2$, the two subgraphs~$A_1$ 
and~$\tilde{A}_1$ correspond to the long and to the short root of the 
fundamental system of this type, respectively.
	
\setlength{\extrarowheight}{0.5ex}
$$
\begin{array}{ccccc} \hline\hline
        G & W(\overline{G})^{\sigma} & L, L_1 & \text{Const.} & \text{Mult.} \\ \hline\hline
        G_2(q), {^3\!D}_4(q) & G_2 & A_1 & {\phi_{1,3}}'' & 1 \\ 
        & & \tilde{A}_1 & & 0 \\ \hline
        F_4(q), {^2\!E}_6(q) & F_4 & B_3 & \phi_{9,2} & 1 \\ 
        & & A_1 & & 6 \\ \hline
        E_6(q) & E_6 & A_5 & \phi_{15,4} & 1 \\
        & & A_1 & & 10 \\ \hline
        E_7(q) & E_7 & E_6 & \phi_{7,1}, \phi_{21,3}, \phi_{27,2} & 1,1,1 \\ 
        & & D_5 \times A_1 & & 1, 1, 2 \\ 
        & & A_1 & & 6, 16, 21 \\ \hline
        E_8(q) & E_8 & E_7 & \phi_{35,2} & 1 \\ 
        & & A_1 & & 28 \\ \hline\hline
\end{array}
$$

\medskip

Let~$P$ denote the standard parabolic subgroup of~$G$ specified in the first 
row of this table corresponding to~$G$, and let~$P_1$ denote the one specified in 
the other row, respectively, in case of $G = E_7(q)$, in one of
the two other rows. We have Levi decompositions $P = UL$ and $P_1 = U_1L_1$ with 
the standard Levi subgroups~$L$ and~$L_1$, and the unipotent radicals~$U$ 
and~$U_1$ of~$P$ and~$P_1$, respectively. The column of the table headed 
``Const.'' gives all non-trivial constituents of $R_{L}^G( \mathbb{R} )$ of odd 
dimension, denoted by their labels as in~\cite[Subsection~$13.9$]{C1}. The last 
column contains the multiplicities of these constituents in 
$R_{L}^G( \mathbb{R} )$ and in $R_{L_1}^G( \mathbb{R} )$, respectively. By the 
Howlett-Lehrer comparison theorem \cite[Theorem 5.9]{HowLeh2}, these 
multiplicities can be computed by inducing the trivial characters of the 
corresponding parabolic subgroups of~$W(\overline{G})^{\sigma}$ 
to~$W(\overline{G})^{\sigma}$; see also Subsection~\ref{HarishChandraTheory}. 
These computations are easily performed with Chevie~\cite{chevie}.

Recall that $\nu = \ad_t \circ \mu$ for some $t \in \overline{T}^{\sigma}$ and
some $\mu \in \Gamma_G\Phi_G$. If $G = G_2(q)$ with $q = 3^f$, 
then~$\Gamma_G\Phi_G$ is cyclic of order $2f$; see 
\cite[Theorem~$2.5.12$(d),(e)]{GLS}. In this case, a generator~$\psi$ 
of~$\Gamma_G\Phi_G$ swaps the two standard Levi subgroups of types~$A_1$
and $\tilde{A}_1$. In all the other cases,~$\mu$, and hence~$\nu$,
stabilizes the groups~$P$,~$L$,~$P_1$ and~$L_1$, and thus also~$U$ and~$U_1$. 
Moreover, every automorphism of~$L$ and~$L_1$ fixes their principal series
characters; see \cite[Theorem~$2.5$(a)]{MalleExt}. By assumption,~$L$ and~$L_1$ 
have the $E1$-property.  If~$V$ is a constituent with even multiplicity 
(including multiplicity~$0$) in one of $R_{L}^G( \mathbb{R} )$ or 
$R_{L_1}^G( \mathbb{R} )$, then $(G,V,n)$ has the $E1$-property by 
Lemma~\ref{LeviSubgroupsReduction}. 

Thus $(G,V,n)$ has the $E1$-property, except, possibly, if $G = G_2(q)$ with 
$q = 3^f$ and~$\mu$ is an odd power of~$\psi$. In this case, the non-trivial 
principal series characters of~$G$ of odd dimension are ${\phi_{1,3}}'$, 
${\phi_{1,3}}''$ and $\phi_{1,6}$, the latter being the Steinberg character. 
As~$\nu$ swaps the two representations ${\phi_{1,3}}'$ and ${\phi_{1,3}}''$ by 
\cite[Theorem~$2.5$(d)]{MalleExt}, we are left with the case that~$V$ is the
Steinberg module of~$G$. This case is settled
in Lemma~\ref{SmallRankOddCharacteristic}.
\end{proof}

We finally deal with the classical groups. 

\begin{prp}
\label{OddCharacteristicClassicalGroups}
Let~$G$ be a classical group, such that every proper subgroup of~$G$ has the 
$E1$-property. Then~$(G,V,n)$ has the $E1$-property.
\end{prp}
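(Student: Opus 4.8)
The plan is to dispose of the only configuration left open after Subsection~\ref{NonPrincipalSeries}, namely the case that~$V$ is a principal series module: the group $\PSL_d(q)$ is already settled in Proposition~\ref{NonPrincipalSeriesCorIII}, and the case $\lambda\neq 1_T$ for the other classical groups by Corollary~\ref{NonPrincipalSeriesCorII} and Propositions~\ref{NonPrincipalSeriesCorIV},~\ref{NonPrincipalSeriesCorV}. So I assume $\lambda=1_T$, write $W:=W(\overline{G})^{\sigma}$, and let $\vartheta\in\Irr(W)$ be the irreducible character labelling~$V$. By Corollary~\ref{PSCor} I may assume $\vartheta(1)$ is odd, and $\vartheta\neq 1_W$ since~$V$ is non-trivial. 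The engine is Lemma~\ref{LeviSubgroupsReduction}: it suffices to produce a \emph{proper}, $\nu$-stable standard Levi subgroup $L=L_I$ for which the multiplicity of~$V$ in $R_L^G(\mathbb{R})$ is even (zero included). By the Howlett--Lehrer correspondence (Subsection~\ref{HarishChandraTheory}) this multiplicity equals $\langle\Ind_{W_I}^{W}1,\vartheta\rangle=\langle 1_{W_I},\Res_{W_I}\vartheta\rangle$, so the whole question becomes a parity computation inside~$W$. The ambient hypotheses of the lemma hold: $L$ has the $E1$-property by the inductive assumption; every automorphism of~$L$, a product of general linear groups with a smaller classical group, fixes its principal series characters by \cite[Theorem~$2.5$]{MalleExt}; and~$L$ is $\nu$-stable because $\ad_t$ centralises~$T$ and hence every standard Levi, while $\mu$ is a field automorphism (for the types with $\Gamma_G=1$) and thus also stabilises them. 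I would also note that multiplicity~$0$ is enough: the character~$\psi$ produced inside the lemma is forced to be non-trivial, since otherwise~$V$ would occur in $R_L^G(\mathbb{R})$ with odd multiplicity.

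For $W=W(B_m)$ --- covering $\PSU_d(q)$, $\P\Omega_{2d+1}(q)$, $\PSp_{2d}(q)$ and $\P\Omega^-_{2d}(q)$ --- I would reduce everything to one branching computation. Index $\Irr(W)$ by bipartitions $(\alpha,\beta)$ with $|\alpha|+|\beta|=m$. Choosing for~$L$ the maximal parabolic with Weyl group $W(B_{m-1})$, the type-$B$ branching rule gives $\langle 1_{W(B_{m-1})}^{W},\chi_{(\alpha,\beta)}\rangle=0$ for every $(\alpha,\beta)$ except $((m),\emptyset)=1_W$, the character $((m-1,1),\emptyset)$, and the reflection character $((m-1),(1))$, where the value is~$1$. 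Thus all characters but these three are settled at once; in particular the sign character $(\emptyset,(1^m))$, which affords the Steinberg module, has multiplicity~$0$. Of the two non-trivial exceptions exactly one is of odd degree (the degrees being $m-1$ and~$m$), the other being even and handled by Corollary~\ref{PSCor}.

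To finish the exceptional pair I would switch parabolics. If $((m-1,1),\emptyset)$ is the odd-degree one, I take the type-$A_{m-1}$ Levi obtained by deleting the double-bond node, for which $\langle 1_{A_{m-1}}^{W},\chi_{(\alpha,\beta)}\rangle$ is non-zero only when~$\alpha$ and~$\beta$ are both single rows; this yields multiplicity~$0$. If instead the reflection character is odd, I take the type-$A_{m-2}$ Levi obtained by deleting both end nodes, where $\langle \Ind_{W_I}^{W}1,\chi_{(\alpha,\beta)}\rangle$ equals the dimension of the $W_I$-fixed space in the reflection representation, which here is~$2$. In every case the multiplicity is even and Lemma~\ref{LeviSubgroupsReduction} applies.

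The genuinely delicate case, and the one I expect to be the main obstacle, is $\P\Omega^+_{2d}(q)$ with $W=W(D_d)$. Here~$\nu$ may involve the order-$2$ graph automorphism~$\iota$ (and triality for $d=4$), so I am forced to restrict to $\iota$-stable Levi subgroups --- the one of type~$D_{d-1}$, the one of type~$A_{d-2}$ deleting both fork nodes, and symmetric products thereof --- and to redo the branching analysis within this constrained family. The extra difficulty is that the characters $\chi_{(\alpha,\alpha)}$ of $W(B_d)$ split into two $W(D_d)$-constituents $\chi^{\pm}_{(\alpha,\alpha)}$ that~$\iota$ interchanges and that can be of odd degree; these, together with the bookkeeping over $\iota$-stable parabolics, I would verify by explicit Chevie~\cite{chevie} computations arranged in a table in the style of Proposition~\ref{ExceptionalGroupsOddCharacteristic}. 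Finally, the small-rank groups admitting no proper non-toral $\nu$-stable Levi (notably $\PSU_3(q)$, and the Steinberg modules of the smallest cases) are already covered by Lemma~\ref{SmallRankOddCharacteristic}, which completes the proof.
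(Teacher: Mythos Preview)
Your proposal has a genuine gap: you claim that $\PSL_d(q)$ is ``already settled in Proposition~\ref{NonPrincipalSeriesCorIII}'', but that proposition, like all of Subsection~\ref{NonPrincipalSeries}, only treats the case $\lambda\neq 1_T$. The principal-series case for $\PSL_d(q)$ is not covered anywhere prior to Proposition~\ref{OddCharacteristicClassicalGroups}, and you never return to it. This case is not routine: $\PSL_d(q)$ has a graph automorphism, so only $\iota$-stable standard Levi subgroups are admissible, and your ``maximal parabolic plus auxiliary parabolic'' template does not transfer directly. The paper handles $d\ge 4$ by first using the $\iota$-stable $A_{d-3}$ Levi (the middle nodes) to reduce to $\chi\in\{\chi^{(d-2,2)},\chi^{(d-2,1^2)}\}$, and then---crucially---applies not Lemma~\ref{LeviSubgroupsReduction} again but Lemma~\ref{MainHCLemma} directly, with the \emph{Steinberg} character of the $\iota$-stable $A_1\times A_1$ Levi (outer nodes) as~$\psi$. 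This use of a specific non-trivial~$\psi$ is outside the toolkit you describe.

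For $W=W(B_m)$ your approach is correct and close in spirit to the paper's, though the auxiliary step differs: after the $B_{m-1}$-reduction you split into two parabolics ($A_{m-1}$ and $A_{m-2}$), whereas the paper uses a single $A_1$ Levi at the long-root end and simply records that $\langle\Ind_{S_2}^{W}1,\zeta^\pi\rangle$ equals $d-2$ and $d-1$ for the two surviving~$\pi$, so whichever one has odd degree automatically has even multiplicity there. For $W=W(D_d)$ with $d>4$ the paper reuses exactly this $B_d$ table with the $D_{d-1}$ and $A_1$ Levis; for $d=4$ it gives a concrete argument using the central $A_1$ node, the fact that triality permutes the three surviving characters transitively (forcing $|\iota|\le 2$ since~$\chi$ is $\nu$-invariant), and then a $\iota$-stable $A_3$ Levi. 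Your $D_d$ treatment (``explicit Chevie computations arranged in a table'') is too sketchy to count as a proof.
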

\begin{proof}
By Table~\ref{TableOfGroups}, the group~$G$ is one of 
$\PSL_d( q )$, $d \geq 2$, $\PSU_d(q)$, $d \geq 3$, $\P\Omega_d(q)$, 
$d \geq 5$ odd, $\PSp_d(q)$, $d \geq 6$ even, or
$\P\Omega^{\pm}_d( q )$, $d \geq 8$ even.
 
By hypothesis,~$V$ is a principal series $\mathbb{R}G$-module. Let~$\chi$ denote
the character of~$V$.

We write $W := W( \overline{G} )^{\sigma}$ for the Weyl group of~$G$. Then~$W$
is a Coxeter group of type~$A_{d-1}$, of type~$B_d$ or~$D_d$. In the respective
cases, the irreducible characters of~$W$ are labelled by partitions of~$d$, by
bipartitions of~$d$, and by signed, unordered bipartitions of~$d$; see, e.g.\
\cite[Section~$13.2$]{C2}. Following the
usage announced in Subsection~\ref{HarishChandraTheory}, we will 
write~$\chi^{\pi}$ for the character of the principal series module of~$G$ 
corresponding, via Harish-Chandra theory, to the irreducible character of~$W$ 
labelled by~$\pi$; the latter will be denoted by~$\zeta^{\pi}$. Recall that the 
computation of Harish-Chandra induced trivial modules is reduced to the 
induction of the trivial character from parabolic subgroups of~$W$. This is 
usually done with the branching rules. 

Let us begin with the case $G = \PSL_d( q )$, $d \geq 2$. If $d < 4$, the only 
non-trivial principal series module of~$G$ of odd dimension is the Steinberg 
module. For this, our assertion has been settled in 
Lemma~\ref{SmallRankOddCharacteristic}. So let us assume that $d \geq 4$ in the 
following. Here,~$W$ is a Coxeter group of type~$A_{d-1}$, i.e.\ $W \cong S_d$, 
the symmetric group on~$d$ letters. First, consider the standard 
Levi subgroup~$L_I$ for $I \subseteq \Pi$ of type $A_{d-3}$, invariant under the 
graph automorphism; i.e.\ $I$ is obtained from deleting the first and the last 
node of the Dynkin diagram of type $A_{d-1}$; see Figure~\ref{DynkinDiagramA}. 
Then~$P_I$ and~$L_I$ are $\nu$-invariant. By the branching rule for~$S_d$, the 
non-trivial constituents of $R^G_{L_I}( \mathbb{R} )$ have 
characters $\chi^{\pi}$ with $\pi \in \{ (d-1,1), (d-2,2), (d-2,1^2) \}$, where
$\chi^{(d-1,1)}$ occurs twice. By Lemma~\ref{LeviSubgroupsReduction}, we may 
assume that $\chi = \chi^{\pi}$ with~$\pi$ one of~$(d-2,2)$ or~$(d-2,1^2)$.
	
Now consider the standard Levi subgroup $L_{I'}$ of type $A_1 \times A_1$ 
corresponding to the two outer nodes of the Dynkin diagram of type $A_{d-1}$.
Again, $P_{I'}$ and $L_{I'}$ are $\nu$-invariant. Let~$\psi$ denote the 
Steinberg character of~$L_{I'}$.
Clearly,~$\psi$ is $\nu$-invariant, real and of odd degree. By 
Lemma~\ref{MainHCLemma}, it suffices to show that 
$\langle R_{L_{I'}}^G( \psi ), \chi \rangle$ is odd. We claim that, in fact,
$\langle R_{L_{I'}}^G( \psi ), \chi \rangle = 1$. To prove this
claim, observe that~$\psi$ corresponds to the sign-character~$\xi$ of the 
parabolic subgroup $W( \overline{G} )^{\sigma}_{I'} \cong S_2 \times S_2$ 
of $W \cong S_d$. An application of the branching rule shows 
that
$$\langle \Ind_{S_2 \times S_2}^{S_d}( \xi ), \zeta^{\pi} \rangle = 1$$
for $\pi \in \{ (d-2,2), (d-2,1^2) \}$. The Howlett-Lehrer comparison theorem 
\cite[Theorem 5.9]{HowLeh2} proves our claim. Thus~$(G,V,n)$ has the
$E1$-property if $G = \PSL_d( q )$.

Next, assume that~$G$ is one of the groups $\PSU_{2d}(q)$, $\PSU_{2d+1}(q)$,
$\PSp_{2d}(q)$, $\P\Omega_{2d+1}(q)$ or $\P\Omega^-_{2(d+1)}(q)$. In this case,
all standard parabolic subgroups and standard Levi subgroups are $\mu$-invariant 
and hence $\nu$-invariant. The Weyl~$W$ of~$G$ is of type $B_d$; see, e.g.\ 
\cite[Proposition~$2.3.2$]{GLS}. By
Lemma~\ref{SmallRankOddCharacteristic} we may assume that $d \geq 2$ if 
$G = \PSU_{2d+1}(q)$, as the only non-trivial principal series module of 
$\PSU_3(q)$ is the Steinberg module, which has been dealt with in 
Lemma~\ref{SmallRankOddCharacteristic}. Hence $d \geq 2$ in all cases. Applying 
Lemma~\ref{LeviSubgroupsReduction} to the standard parabolic subgroup of type 
$B_{d-1}$, we may assume that
$\chi = \chi^{\pi}$ with $\pi \in \{  (d-1,1), - ), ( (d-1), (1) ) \}$.
Using Corollary~\ref{PSCor} and, once more,
Lemma~\ref{LeviSubgroupsReduction}, the following table, which is proved by the 
branching rules, establishes our claim. In this table, the subgroup~$S_2$ of~$W$ 
corresponds to the node of valency one at the end of the Dynkin diagram of~$B_d$ 
if $d > 2$, and if $d = 2$, to one of the two nodes of the Dynkin diagram 
of~$B_2$.

\setlength{\extrarowheight}{0.7ex}
$$
\begin{array}{ccc} \hline\hline
\pi & \zeta^{\pi}(1) & 
\langle \zeta^{\pi}, \Ind_{S_2}^{W}( 1_{S_2} ) \rangle \\ \hline\hline
( (d-1,1), - ) & d - 1 & d - 2 \\
( (d-1), (1) ) & d & d - 1 \\ \hline\hline
\end{array}
$$

\medskip
We are left with the case $G = \P\Omega^+_{2d}(q)$ with $d \geq 4$. In this case,
the Weyl group~$W$ of~$G$ is a Coxeter group of type $D_d$. We first deal with 
the case $d = 4$, where there is an exceptional graph automorphism of order~$3$ 
of the Dynkin diagram of~$D_4$; see Figure~\ref{DynkinDiagramA}. Let 
$I' \subseteq \Pi$ be of type $A_1$, invariant under all graph automorphisms of 
the Dynkin diagram, i.e.\ $I'$ corresponds to node~$3$ in 
Figure~\ref{DynkinDiagramA}. Then~$P_{I'}$ and~$L_{I'}$ are $\nu$-invariant.
Applying Corollary~\ref{PSCor} and Lemma~\ref{LeviSubgroupsReduction}, we may
assume that $\chi = \chi^{\pi}$ with $\pi$ one of 
$\{ (1^2), (1^2) \}^+$, $\{ (1^2), (1^2) \}^-$ or $\{ -, (2,1^2) \}$. 
By \cite[Theorem~$2.5$(b)]{MalleExt}, the elements of order~$3$ in~$\Gamma_G$
permute the three characters with labels 
$\{ (1^2), (1^2) \}^+$, $\{ (1^2), (1^2) \}^-$ and $\{ -, (2,1^2) \}$ 
transitively. We have $\nu = \ad_t \circ \iota \circ \varphi'$ with $\iota \in
\Gamma_G$ and $\varphi' \in \Phi_G$. By, \cite[Theorem~$2.5$]{MalleExt}, the 
unipotent characters of~$G$ are fixed by~$\ad_t$ and~$\varphi'$. Since~$\chi$ is 
$\nu$-invariant and~$\Gamma_G$ is isomorphic to~$S_3$, we conclude that 
$|\iota| \leq 2$. This implies that there is a $\iota$-invariant $3$-element 
subset $I \subseteq \Pi$ containing the central node of the Dynkin diagram.
Then $P_{I}$ and $L_{I}$ are $\nu$-invariant. The non-trivial constituents of 
$R_{L_I}^G( \mathbb{R} )$ are labeled by the unordered bipartitions 
$\{(1), (3)\}$ and $\{ - , (3,1)\}$. Hence~$V$ does not occur in 
$R_{L_{I}}^G( \mathbb{R} )$, and we conclude that~$V$ has the $E1$-property 
from Lemma~\ref{LeviSubgroupsReduction}.

Finally, assume that $d > 4$. Here, we take $I$ of type $D_{d-1}$ and $I'$ of
type~$A_1$, the latter corresponding to the leaf of the Dynkin diagram whose
removal gives~$I$. Then~$I$ and $I'$ are invariant under the symmetries of the
Dynkin diagram, and thus $P_I$, $L_I$, $P_{I'}$ and $L_{I'}$ are 
$\nu$-invariant. The non-trivial constituents of $R_{L_{I}}( \mathbb{R} )$ are
labeled by the unordered bipartitions $\{ (1), (d-1) \}$ and $\{ -, (d-1,1) \}$.
Then the above table for the Coxeter group of type~$B_d$ also works for $W$ of 
type $D_d$, and we are done by Corollary~\ref{PSCor} and
Lemma~\ref{LeviSubgroupsReduction}.
\end{proof}

\subsection{Summary} We summarize our results for the finite simple groups of
Lie type of odd characteristic.

\begin{thm}
\label{MainOdd}
Let~$G$ be a finite group of Lie type of odd characteristic. Then~$G$ is not a
minimal counterexample to {\rm Theorem~\ref{RLTheorem}}.
\end{thm}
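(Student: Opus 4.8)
The plan is to argue by contradiction, assuming that $G$ is a minimal counterexample, and then to assemble the case analysis carried out in Subsections~\ref{NonPrincipalSeries}--\ref{PSExceptional}. Suppose $G$ is a minimal counterexample to Theorem~\ref{RLTheorem}. By Corollary~\ref{ReductionCor}, $G$ is a non-abelian finite simple group, and by assumption it is of Lie type of odd characteristic, hence one of the groups of Table~\ref{TableOfGroups} with $q = r^f$ odd. The point of minimality is that every group of order smaller than $|G|$---in particular every proper subgroup of $G$---has the $E1$-property. This is precisely the standing hypothesis of Propositions~\ref{ExceptionalGroupsOddCharacteristic} and~\ref{OddCharacteristicClassicalGroups}, so those results become available for $G$.

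Next I would fix an arbitrary triple $(G,V,n)$ as in Notation~\ref{HypoSimple} and apply Proposition~\ref{PrincipalSeriesReduction} to obtain the one-dimensional $\mathbb{R}T$-module $S$ whose character $\lambda$ satisfies $\lambda^2 = 1_T$, with $V$ occurring in $R_T^G(S)$ with odd multiplicity. Whether or not $\lambda = 1_T$ yields the principal versus non-principal series dichotomy that structures the whole argument.

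In the non-principal case $\lambda \neq 1_T$, Corollary~\ref{NonPrincipalSeriesCorII} disposes of every group except those in rows~$1$, $5$ and~$9$ of Table~\ref{TableOfGroups}, and these three families are handled separately by Propositions~\ref{NonPrincipalSeriesCorIII}, \ref{NonPrincipalSeriesCorIV} and~\ref{NonPrincipalSeriesCorV}, treating $\PSL_d(q)$, $E_6(q)$ and $\P\Omega^+_{2d}(q)$, respectively. In the principal case $\lambda = 1_T$, I would split according to whether $G$ is exceptional or classical: Proposition~\ref{ExceptionalGroupsOddCharacteristic} covers rows~$7$--$13$ together with the Ree groups ${^2G}_2(3^{2m+1})$, while Proposition~\ref{OddCharacteristicClassicalGroups} covers the classical groups of rows~$1$--$6$. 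In each case $(G,V,n)$ has the $E1$-property; since $V$ and $n$ were arbitrary, $G$ has the $E1$-property, contradicting the choice of $G$ as a counterexample.

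The step demanding the most care is not any individual computation but the bookkeeping: checking that these cases genuinely exhaust all simple groups of Lie type of odd characteristic in Table~\ref{TableOfGroups}, and confirming that the small-rank groups excluded by the conditions in its last column are either non-simple or already accounted for elsewhere (for example $A_6 = \PSL_2(9)$, treated as a group of Lie type by the remark following Corollary~\ref{AlternatingAndSporadicGroups}). Once completeness of the case list is verified, the theorem follows by merely collecting the quoted propositions.
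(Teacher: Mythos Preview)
Your proposal is correct and follows essentially the same route as the paper's proof, which simply cites Proposition~\ref{PrincipalSeriesReduction}, Corollary~\ref{NonPrincipalSeriesCorII}, Propositions~\ref{NonPrincipalSeriesCorIII}--\ref{NonPrincipalSeriesCorV}, and Propositions~\ref{ExceptionalGroupsOddCharacteristic} and~\ref{OddCharacteristicClassicalGroups}. You have merely made explicit the contradiction framing and the principal/non-principal dichotomy that the paper leaves implicit in its one-line proof, and you have correctly identified that minimality supplies the ``every proper subgroup has the $E1$-property'' hypothesis needed for the two principal-series propositions.
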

\begin{proof}
This follows from Proposition~\ref{PrincipalSeriesReduction},
Corollary~\ref{NonPrincipalSeriesCorII}, 
Propositions~\ref{NonPrincipalSeriesCorIII}--\ref{NonPrincipalSeriesCorV}, 
Proposition~\ref{ExceptionalGroupsOddCharacteristic} 
and Proposition~\ref{OddCharacteristicClassicalGroups}.
\end{proof}

\section*{Acknowledgements}
The authors thank, in alphabetical order, Thomas Breuer, Xenia Flamm, Meinolf 
Geck, Jonas Hetz, Frank Himstedt, Frank L{\"u}beck, Jean Michel, 
Britta Sp{\"a}th, Andrzej Szczepa{\'n}ski and Jay Taylor for numerous helpful 
hints and enlightening conversations.


\begin{thebibliography}{100}
\bibitem{BHRD}   {\sc J.~N.~Bray, D.~F.~Holt and C.~M.~Roney-Dougal},
                 The Maximal Subgroups of the Low-Dimensional Finite Classical
                 Groups, LMS Lecture Note Series 407, Cambridge University Press,
                 2013.

\bibitem{C1}     {\sc R.~W.~Carter}, {Simple groups of Lie type},
                 John Wiley \& Sons, London-New York-Sydney 1972.

\bibitem{C2}     {\sc R.~W.~Carter}, {Finite groups of Lie type:
                 Conjugacy classes and complex characters},
                 John Wiley \& Sons, Inc., New York 1985.

\bibitem{Atlas}  {\sc J.\ H.\ Conway, R.\ T.\ Curtis, S.\ P.\ Norton,
                 R.\ A.\ Parker, and R.\ A.\ Wilson}, Atlas of finite groups,
                 Oxford University Press, Eynsham, 1985.

\bibitem{DDRP}   {\sc K.~Dekimpe, B.~De Rock and P.~Penninckx}, The~$R_\infty$ 
                 property for infra-nilmanifolds, {\em Topol.\ Methods Nonlinear 
		 Anal.} {\bf 34} (2009), 353--373.

\bibitem{DiMi2}  {\sc F.~Digne and J.~Michel}, {Representations of finite
                 groups of Lie type}, Second Edition, London Math.\ Soc.\
                 Students Texts 95, Cambridge University Press, 2020.

\bibitem{chevie} {\sc M.~Geck, G.~Hiss, F.~L{\"u}beck, G.~Malle, and
                 G.~Pfeiffer}, {\sf CHEVIE}---A system for computing
                 and processing generic character tables,
                 {\em AAECC} {\bf 7} (1996), 175--210.

\bibitem{GeMa}   {\sc M.~Geck and G.~Malle}, The character theory of finite
                 groups of Lie type, A guided tour, Cambridge Studies in Advanced
                 Mathematics, 187, Cambridge University Press, Cambridge, 2020.

\bibitem{GLS}    {\sc D.\ Gorenstein, R.\ Lyons, and R.\ Solomon},
                 {The classification of finite simple groups, Number 3},
                 Mathematical Surveys and Monographs, AMS, Providence, 1998.

\bibitem{HLIschia}{\sc G.~Hiss and R.~Lutowski}, The $R_\infty$-property of flat 
                 manifolds: Toward the eigenvalue one property of finite groups, 
                 \url{arXiv:2412.10206}.

\bibitem{HowLeh2}{\sc R.~B.~Howlett and G.~I.~Lehrer}, Representations of
                 generic algebras and finite groups of Lie type,
                 {\em Trans.\ Amer.\ Math.\ Soc.} {\bf 280} (1983), 753--779.

\bibitem{HuBII}  {\sc B.~Huppert and N.~Blackburn}, Finite Groups II,
                 Springer-Verlag, Berlin, 1982.

\bibitem{Isaacs} {\sc I.~M.~Isaacs}, {Character theory of finite groups},
                 Corrected reprint of the 1976 original [Academic Press,
                 New York], AMS Chelsea Publishing, Providence, RI, 2006.

\bibitem{LL}     {\sc F.~L{\"u}beck}, {Centralizers and numbers of semisimple
                 classes in exceptional groups of Lie type},
                 \url{http://www.math.rwth-aachen.de/~Frank.Luebeck/chev/CentSSClasses/}.

\bibitem{LuSzcz} {\sc R.~Lutowski and A.~Szczepa{\'n}ski}, Holonomy groups of 
                 flat manifolds with the $R_{\infty}$ property, 
                 {\em Fund.\ Math.} {\bf 223} (2013), 195--205.

\bibitem{MalleExt}{\sc G.~Malle}, Extensions of unipotent characters and the
                 inductive McKay condition, {\em J.~Algebra} {\bf 320} (2008),
                 2963--2980.

\bibitem{Michel} {\sc J.~Michel}, The development version of the {\tt CHEVIE}
                 package of {\tt GAP3}, {\em J.~Algebra} {\bf 435} (2015),
                 308--336.

\bibitem{GAP3}   {\sc M.~Sch{\"o}nert et~al.},
                 \newblock {{GAP} -- {Groups}, {Algorithms}, and {Programming} --
                 version 3 release 4 patchlevel 4"},  
                 \newblock Lehrstuhl D f{\"u}r Mathematik,
                 Rheinisch Westf{\"a}lische
                 Technische Hoch\-schule, Aachen, Germany, 1997.

\bibitem{SiFra}  {\sc W.~A.~Simpson and J.~S.~Frame}, The character tables for 
                 $\SL(3,q)$, $\SU(3,q^2)$, $\PSL(3,q)$, $\PSU(3,q^2)$,
                 {\em Canadian J.~Math.} {\bf 25} (1973), 486--494. 

\bibitem{SteinGL}{\sc R.~Steinberg}, The representations of $\GL(3,q)$, 
                 $\GL(4,q)$, $\PGL(3,q)$, and  $\PGL(4,q)$, {\em Canad.\ 
                 J.~Math.} {\bf 3} (1951), 225--235.

\bibitem{SzczOut}{\sc A.~Szczepa{\'n}ski}, Outer automorphism groups of 
                 Bieberbach groups, {\em Bull.\ Belg.\ Math.\ Soc.\ Simon 
                 Stevin} {\bf 3} (1996), 585--593. 

\bibitem{Taylor} {\sc D.~E.~Taylor}, The Geometry of the Classical Groups,
                 Heldermann Verlag, Berlin, 1992.

\bibitem{WardRee}{\sc H.~N.~Ward}, On Ree's series of simple groups,
                 {\em Trans.\ Amer.\ Math.\ Soc.} {\bf 121} (1966), 62--89.
\end{thebibliography}
\end{document}